\patchcmd{\NAT@test}{\else \NAT@nm}{\else \NAT@hyper@{\NAT@nm}}{}{}
\theoremstyle{plain}
\newtheorem{theorem}{Theorem}[section]
\newtheorem{lemma}[theorem]{Lemma}
\newtheorem{corollary}[theorem]{Corollary}
\newtheorem{conjecture}{Conjecture}
\newtheorem{definition}[theorem]{Definition}
\newtheorem{claim}[theorem]{Claim}
\newtheorem{prop}{}
\newenvironment{Prop}[2][]
{\begin{prop}[#1]}
{\end{prop}}
\newtheorem{inclaim}{Claim}
\newenvironment{Inclaim}[2][]
{\begin{inclaim}}
{\end{inclaim}}
\begin{document}

\title{Two Strong 3-Flow Theorems for Planar Graphs}
\author{J. V. de Jong\footnote{Department of Combinatorics and Optimization, University of Waterloo, Canada (jamiev.dejong@gmail.com)}}

\maketitle
\begin{abstract}
In 1972, Tutte posed the $3$-Flow Conjecture: that all $4$-edge-connected graphs have a nowhere zero $3$-flow. This was extended by Jaeger et al.~(1992) to allow vertices to have a prescribed, possibly non-zero difference (modulo $3$) between the inflow and outflow. They conjectured that all $5$-edge-connected graphs with a valid prescription function have a nowhere zero $3$-flow meeting that prescription (we call this the Strong $3$-Flow Conjecture). Kochol (2001) showed that replacing $4$-edge-connected with $5$-edge-connected would suffice to prove the $3$-Flow Conjecture and Lov\'asz et al.~(2013) showed that the $3$-Flow and Strong $3$-Flow Conjectures hold if the edge connectivity condition is relaxed to $6$-edge-connected. Both problems are still open for $5$-edge-connected graphs. \\

The $3$-Flow Conjecture was known to hold for planar graphs, as it is the dual of Gr\"otzsch's Colouring Theorem. Steinberg and Younger (1989) provided the first direct proof using flows for planar graphs, as well as a proof for projective planar graphs. Richter et al.~(2016) provided the first direct proof using flows of the Strong $3$-Flow Conjecture for planar graphs. We provide two extensions to their result, that we developed in order to prove the Strong $3$-Flow Conjecture for projective planar graphs. 
\end{abstract}

\section{Introduction}

A \emph{$\mathbb{Z}_k$-flow} on a graph $G$ is a function that assigns to each edge $e\in E(G)$ an ordered pair consisting of a direction, and a value $f(e)\in\{0,...,k-1\}$, such that if $D$ is the resulting directed graph, then, for each vertex $v\in V(G)$,
$$\sum_{e=(u,v)\in E(D)}f(e)-\sum_{e=(v,w)\in E(D)}f(e)\equiv 0\pmod{k}.$$ 
It is easy to see that every graph $G$ has a $\mathbb{Z}_k$-flow for every value of $k$: set $f(e)=0$ for all $e\in E(G)$. Therefore it is typical to use the following more restrictive concept. A \emph{nowhere zero $\mathbb{Z}_k$-flow} on $G$ is a $\mathbb{Z}_k$-flow on $G$ such that no edge is assigned the value zero. In 1950, Tutte proved that a graph has a nowhere-zero $\mathbb{Z}_k$-flow if and only if it has a nowhere-zero $k$-flow, which requires the net flow through each vertex to be exactly zero; see \citet{diestal05} for a proof of this equivalence and further background on flows.\\

Tutte (cf.~\citet{tutte72}) conjectured that every $4$-edge-connected graph has a nowhere zero $3$-flow. This is known as the $3$-Flow Conjecture, and while progress has been made for many classes of graphs, it is still an open problem. For planar graphs the $3$-Flow Conjecture is equivalent to Gr\"otzsch's Theorem, and \citet{steinbergyounger89} provided a direct proof using flows. \citet{steinbergyounger89} also proved that the $3$-Flow Conjecture holds for graphs embedded in the projective plane. \\

As an extension of $\mathbb{Z}_3$-flows, we add a prescription function, where each vertex in the graph is assigned a value in $\mathbb{Z}_3$ that defines the net flow through the vertex. The prescriptions of the vertices in the graph must sum to zero in $\mathbb{Z}_3$. A graph $G$ is $\mathbb{Z}_3$-connected if for each valid prescription function $p$, $G$ has a nowhere-zero $\mathbb{Z}_3$-flow achieving $p$. This led \citet{jaegar92} to pose the following conjecture.

\begingroup
\def\thetheorem{\ref{jaeger}}
\begin{conjecture}[Strong $3$-Flow Conjecture]
Every $5$-edge-connected graph is $\mathbb{Z}_3$-connected.
\end{conjecture}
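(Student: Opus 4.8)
The plan is to argue by contradiction from a counterexample: a graph $G$ together with a valid prescription $p\colon V(G)\to\mathbb{Z}_3$ admitting no nowhere-zero $\mathbb{Z}_3$-flow that realises $p$, chosen so that $|V(G)|+|E(G)|$ is minimum. The first phase is to extract as much structure from this minimality as possible. The workhorse reduction is that $G$ cannot contain a nontrivial $\mathbb{Z}_3$-connected subgraph $H$: contracting $H$ to a single vertex does not decrease edge-connectivity and produces a strictly smaller $5$-edge-connected graph, which by minimality is $\mathbb{Z}_3$-connected, hence carries a nowhere-zero $\mathbb{Z}_3$-flow meeting the prescription induced on the contracted vertex; the residual prescription this leaves on $H$ is valid, so since $H$ is $\mathbb{Z}_3$-connected the flow extends through $H$, contradicting the choice of $G$. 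The same contraction idea rules out vertices of degree at most $3$, so under $5$-edge-connectivity $G$ has many vertices of degree exactly $5$ and tightly constrained local structure around them; and since $K_4$ is $\mathbb{Z}_3$-connected, $G$ contains no $K_4$ subgraph. Using Mader-type splitting-off at a vertex, together with the fact that contracting across a small edge-cut produces either a smaller counterexample or a $\mathbb{Z}_3$-connected piece, one would push this further and show that $G$ is \emph{essentially $6$-edge-connected} except near a bounded family of degree-$5$ vertices.

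For planar $G$ --- the setting of Richter et al.\ and of the two extensions proved in this paper --- the second phase is a counting argument via Euler's formula, realised as discharging: assign to each vertex the charge $\deg(v)-6$ and to each face of length $\ell$ the charge $2\ell-6$, so that the total charge is $-12$, then redistribute charge toward the deficient degree-$5$ vertices by rules that exploit the absence of reducible configurations, and finish by showing that after discharging no vertex or face remains deficient. For the full conjecture there is no embedding and hence no Euler identity, so this global accounting must be replaced by something intrinsic to $5$-edge-connectivity. The two natural substitutes are (i) edge-disjoint spanning-tree packings --- a $5$-edge-connected graph has two edge-disjoint spanning trees by the Nash-Williams and Tutte theorem, which one would try to bootstrap into the modulo-$3$ orientation ($d^+(v)-d^-(v)$ prescribed in $\mathbb{Z}_3$) that corresponds, under the standard flow/orientation duality, to the desired flow --- and (ii) the modulo-orientation induction of Lov\'asz et al., which already yields the statement under $6$-edge-connectivity.

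The hard part, and the reason the conjecture is still open, is precisely the passage from $6$ to $5$. In the Lov\'asz et al.\ induction one repeatedly contracts a vertex and splits its incident edges in pairs among its neighbours (Mader splitting), and the argument needs the remainder of the graph to stay highly edge-connected afterward; $6$-edge-connectivity leaves exactly enough slack to absorb the worst case, whereas a degree-$5$ vertex leaves an odd edge over after pairing and sits on tight $5$-edge-cuts across which no splitting preserves even $5$-edge-connectivity, so the induction collapses. On the flow side this is the same obstruction: every nontrivial $5$-edge-cut that is not a single-vertex cut would have to be shown reducible, while the degree-$5$ vertices that survive all the earlier reductions interact in configurations that so far resist any uniform treatment. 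My expectation is that closing the gap requires either a new reducible configuration tailored to clusters of degree-$5$ vertices, or a genuinely global invariant --- in the spirit of a fractional tree-packing inequality --- playing the role that Euler's formula plays in the planar case; the structural and discharging machinery above reduces the conjecture to exactly this point.
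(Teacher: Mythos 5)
The statement you were asked to prove is the Strong $3$-Flow Conjecture itself, which the paper states only as a conjecture: it is open, the paper does not prove it, and neither do you. Your text is an honest survey of strategy rather than a proof --- the first phase (contracting $\mathbb{Z}_3$-connected subgraphs out of a minimal counterexample, eliminating small vertices, splitting off) is standard and sound in outline, but the second phase is never carried out. You propose a discharging argument for the planar case and then concede that for general $5$-edge-connected graphs there is no Euler identity to discharge against, that the Lov\'asz et al.\ induction collapses at degree-$5$ vertices, and that "the structural and discharging machinery above reduces the conjecture to exactly this point." That residual point is the entire content of the conjecture; nothing in your write-up closes it. For what it is worth, the paper's actual contributions (Theorems \ref{maindts} and \ref{mainft}) are planar special cases proved not by discharging but by a Thomassen/Lov\'asz/Richter-et-al.-style minimal-counterexample argument organised around a specified face, a directed vertex, and the systematic reduction of small $2$-robust edge-cuts, so even your planar phase diverges from what is done here.

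There is also a concrete false step: you assert that $K_4$ is $\mathbb{Z}_3$-connected and deduce that a minimal counterexample contains no $K_4$ subgraph. In fact $K_4$ is cubic and non-bipartite, hence has no nowhere-zero $\mathbb{Z}_3$-flow at all (equivalently, its planar dual $K_4$ is not $3$-colourable), so it is certainly not $\mathbb{Z}_3$-connected, and that reduction is unavailable. More generally, the contraction argument you describe only applies to subgraphs already known to be $\mathbb{Z}_3$-connected; the known base catalogue of such subgraphs is small, and identifying enough reducible configurations near clusters of degree-$5$ vertices is precisely where the problem remains open. A correct submission here would either prove the conjecture (a major open problem) or restrict to one of the theorems the paper actually establishes.
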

\endgroup

\citet{jaegar88} had earlier posed the following weaker conjecture.

\begin{conjecture}[Weak 3-Flow Conjecture]
There is a natural number $h$ such that every $h$-edge-connected graph has a nowhere zero $\mathbb{Z}_3$-flow. 
\end{conjecture}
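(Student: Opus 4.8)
The plan is to establish the stronger statement that for some absolute constant $h$ every $h$-edge-connected graph is $\mathbb{Z}_3$-connected; applying this to the zero prescription function gives a nowhere-zero $\mathbb{Z}_3$-flow, so the conjecture follows. (This is exactly Thomassen's route, which yields $h=8$; the later work of Lov\'asz, Thomassen, Wu and Zhang cited in the abstract improves the constant to $h=6$.) The first step is to recast everything in terms of orientations: a graph has a nowhere-zero $\mathbb{Z}_3$-flow exactly when it admits an orientation with $d^+(v)\equiv d^-(v)\pmod 3$ for every vertex $v$ (normalise a flow so that every edge carries the value $1$, reversing edges of value $2$), and more generally $G$ is $\mathbb{Z}_3$-connected precisely when for every $\beta\colon V(G)\to\mathbb{Z}_3$ with $\sum_v\beta(v)\equiv |E(G)|\pmod 3$ there is an orientation with $d^+(v)\equiv\beta(v)\pmod 3$ for all $v$. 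Orientations are far friendlier to induction than flows.

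Second, I would prove a technically strengthened statement by induction on $|V(G)|$: the graph is permitted one distinguished vertex $z$ of essentially unrestricted degree, while every other vertex is required to have degree at least a threshold $d_0$ and $G$ is required to be $h$-edge-connected except possibly at $z$. Every $h$-edge-connected graph satisfies this (pick $z$ arbitrarily, provided $d_0\le h$), so the strengthening suffices. In the inductive step one makes local reductions \emph{away from} $z$. If some edge-cut of size below the threshold separates two vertices other than $z$, split $G$ along the cut, realise the induced prescriptions on the two smaller pieces by induction, and recombine the orientations. If some vertex $v\ne z$ has small degree, either contract a conveniently located $\mathbb{Z}_3$-connected subgraph through $v$ (legitimate because contracting a $\mathbb{Z}_3$-connected subgraph changes neither $\mathbb{Z}_3$-connectivity nor the realisability of a prescription) or split off an appropriate pair of edges at $v$, using Mader's edge-splitting theorem to keep the local edge-connectivities intact; in either case $|V(G)|$ drops and one recurses. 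Graphs on a bounded number of vertices form the base cases and are checked directly.

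The crux is the irreducible configuration in which no such reduction is available: $G$ is highly edge-connected, every vertex other than $z$ sits exactly at the degree threshold, and no contraction or splitting respects the hypotheses. Thomassen's new idea is to impose a breadth-first layering rooted at $z$ and to run a global counting argument across the layers showing that, in so tight a graph, one can always reroute or contract an edge incident to $z$ in a way that moves the obstruction to a fresh distinguished vertex while decrementing an auxiliary parameter --- in effect a second, nested induction in which weakening the constraint at the special vertex is paid for by strengthening the degree and connectivity parameters everywhere else. Choosing the constants so that this exchange balances, and so that the edge-connectivity and minimum-degree hypotheses survive every reduction, is the real obstacle and the reason the conjecture resisted for four decades; with Thomassen's arithmetic one obtains $h=8$, and with the refinements of Lov\'asz, Thomassen, Wu and Zhang, $h=6$.
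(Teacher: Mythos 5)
The paper offers no proof of this statement: it is presented as Jaeger's conjecture, with its resolution attributed to \citet{thomassen12} (giving $h=8$) and improved by \citet{lovasz13} (giving $h=6$). So the only question is whether your text is itself a proof, and it is not. Your first two steps are sound and standard: the equivalence between nowhere-zero $\mathbb{Z}_3$-flows and orientations with $d^+(v)\equiv d^-(v)\pmod 3$, and the strategy of inducting on a strengthened statement with one distinguished vertex $z$ at which the degree and connectivity hypotheses are relaxed. But the entire mathematical content of the theorem lives in the step you label ``the crux'': writing down the strengthened hypothesis precisely (in both published arguments the degree of the special vertex is bounded in terms of its prescription value, and the remaining vertices carry degree and connectivity conditions calibrated to survive the reductions), verifying that splitting along a small cut, lifting a pair of edges, and contracting a subgraph each preserve that hypothesis, and disposing of the irreducible configuration. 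You dispose of all of this by invoking ``Thomassen's arithmetic,'' which is a citation rather than an argument.

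Moreover, the one concrete mechanism you do propose for the hard step --- a breadth-first layering rooted at $z$ with a global counting argument across the layers --- does not describe either published proof, and I do not see how to make it work as stated: the actual arguments are local, resting on contracting an edge at the special vertex or lifting edges at a minimum-degree vertex, with the induction hypothesis engineered so that the smaller graph still satisfies it. Until you state an explicit induction hypothesis and check that every reduction preserves it (in particular that splitting along a cut does not destroy the connectivity requirement on the side away from $z$, and that your edge-splitting step is available at vertices of the parity and degree that actually arise), what you have is an outline of where a proof would live, not a proof.
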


\citet{jaegar88} also showed that this conjecture is equivalent to the same statement regarding $\mathbb{Z}_3$-connectivity. The Weak 3-Flow Conjecture remained open until \citet{thomassen12} proved that $h=8$ sufficed. 

\begin{theorem}
Every $8$-edge-connected graph is $\mathbb{Z}_3$-connected. 
\end{theorem}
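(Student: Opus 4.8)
The plan is to reduce the theorem to a more flexible statement amenable to induction, since the class of $8$-edge-connected graphs is not closed under the operations one needs: contracting an edge or deleting a vertex preserves only a lower bound on the vertex degrees, not $8$-edge-connectivity. I would therefore prove the following \emph{extension lemma}: if $G$ is a connected, bridgeless graph with a distinguished vertex $z$ such that $d_G(v)\ge 8$ for every $v\neq z$, if $b\colon V(G)\to\mathbb{Z}_3$ satisfies $\sum_{v}b(v)=0$, and if the edges incident with $z$ have already been oriented and assigned nonzero values whose signed contribution to the balance at $z$ equals $b(z)$, then this partial flow extends to a nowhere-zero $\mathbb{Z}_3$-flow on $G$ with boundary $b$. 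The theorem follows at once: given an $8$-edge-connected graph $G$ with a valid prescription $b$ (the case $|V(G)|=1$ being trivial), choose any vertex $z$, so $d_G(z)\ge 8$; orient all but one of the edges at $z$ arbitrarily and give them nonzero values whose combined contribution at $z$ is some residue different from $b(z)$ --- possible because any two ``free'' $\mathbb{Z}_3$-labels already realize all three residues --- and then the remaining edge gets the unique orientation-and-value completing the balance to $b(z)$, necessarily nonzero. Since $G$ is bridgeless and every vertex other than $z$ has degree at least $8$, the lemma applies.

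The lemma itself I would prove by induction on $|V(G)|+|E(G)|$, with $|V(G)|=1$ vacuous. A standard preliminary reduction lets me assume $G$ is sufficiently connected (decomposing along cut vertices and small edge-cuts and recursing on the pieces, each of which inherits the degree bound and can be equipped with its own distinguished vertex and a consistent interface pre-flow). In the main case, pick a neighbour $v$ of $z$ and absorb it into the prescribed part: if $v$ is joined to $z$ by all but at most one of its edges, I instead split off or reroute the parallel $vz$-edges, which yields a strictly smaller admissible instance; otherwise $v$ has at least two edges to vertices other than $z$, so I can orient those edges and assign them nonzero values making the conservation law at $v$ hold --- once more using that two free $\mathbb{Z}_3$-labels realize every residue. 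Now $v$ is balanced with all its incident edges labelled, so I delete $v$, fold the flow value on each deleted edge into the prescribed balance of the vertex at its other end, and recurse on $G-v$ with $z$ still distinguished; a flow on $G-v$ meeting the updated prescription lifts back across $v$ to the desired flow on $G$.

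The only real obstacle is verifying that each reduction produces a genuinely valid smaller instance: that the bound $d(\cdot)\ge 8$ is restored at every surviving vertex other than $z$, that connectivity (in particular bridgelessness) is not destroyed, and that $z$ continues to carry a consistent prescribed partial flow. Because the reductions lower the degrees of $v$'s former neighbours, the hypothesis ``$\ge 8$'' is precisely the slack being consumed, and the case analysis must be arranged so that it is never overspent --- this, together with the accompanying $\mathbb{Z}_3$ label bookkeeping, is the technical core of the proof, and it is what pins down the constant $8$ (Thomassen shows $8$ is enough; Lov\'asz, Thomassen, Wu and Zhang subsequently sharpened the same scheme to $6$). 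The base case, the $\mathbb{Z}_3$ arithmetic, and the reconstruction of the flow on $G$ from the flow on the reduced graph are all routine.
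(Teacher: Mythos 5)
This theorem is not proved in the paper at all --- it is quoted as background from Thomassen (2012) --- so the comparison has to be with Thomassen's actual argument, whose overall shape (a strengthened induction hypothesis with one distinguished vertex carrying a pre-oriented partial flow, plus vertex-absorption/splitting reductions) you have correctly identified. The problem is that your extension lemma, the statement actually carrying the induction, is false as formulated. ``Connected and bridgeless with $d_G(v)\ge 8$ for all $v\ne z$'' is far too weak: take two disjoint copies of $K_9$, pick $u_1$ in the first and $u_2$ in the second, add two parallel edges $u_1u_2$, and set $z=u_1$. Every vertex other than $z$ has degree at least $8$ and the graph is $2$-edge-connected, but the two edges of the cut separating the second copy are both incident with $z$ and hence pre-oriented and pre-valued; their net contribution into the second copy is fixed by the pre-flow, while the value it must equal, $\sum_{v\in K_9^{(2)}}b(v)$, is fixed by $b$. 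One can easily choose a pre-flow satisfying the balance at $z$ for which these two quantities disagree, and then no extension exists. This is why Thomassen's induction hypothesis is a genuine edge-connectivity condition (every edge cut, apart from the trivial cut at the distinguished vertex, has at least $8$ edges), not a degree bound plus bridgelessness; the degree bound is what survives locally after a reduction, but the connectivity condition is what must be re-verified, and that verification is a substantial part of the proof rather than a hypothesis one can weaken away.

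Relatedly, the ``standard preliminary reduction'' by which you propose to restore high connectivity --- decompose along small cuts and recurse on the pieces with ``a consistent interface pre-flow'' --- is circular. The net flow across a small cut is determined by $b$, but the individual edge values and orientations are not, and a choice that extends on one side need not extend on the other; you cannot solve either piece first because each piece sees the contracted other side as a low-degree vertex that is not its distinguished vertex. The actual proofs avoid this by never letting small nontrivial cuts arise (they are excluded by the invariant and handled, when they threaten to appear after a reduction, by a careful case analysis and by lifting pairs of edges), and by a precise accounting inequality tying $d(z_0)$ to the number of low-degree neighbours created by previous absorptions. You correctly flag this bookkeeping as the technical core, but since it is exactly the content that distinguishes $8$ (or $6$) from ``false,'' a proof that defers all of it has not yet engaged with the theorem; as written, the skeleton rests on a lemma that the counterexample above refutes.
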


\citet{lovasz13} extended this to the following result. 

\begin{theorem}
If $G$ is a $6$-edge-connected graph, then $G$ is $\mathbb{Z}_3$-connected.
\end{theorem}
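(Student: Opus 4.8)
The plan is to adapt Thomassen's strategy for the Weak $3$-Flow Conjecture in the refined form developed by Lovász, Thomassen, Wu, and Zhang. First I would translate the problem from flows to orientations. Since $2\equiv -1\pmod 3$, a value-$2$ edge is just a value-$1$ edge traversed backwards, so a nowhere-zero $\mathbb{Z}_3$-flow on $G$ meeting a prescription $p$ is exactly an orientation $D$ of $G$ with $d^+_D(v)\equiv\beta(v)\pmod 3$ at every vertex, where $\beta(v):=2(d(v)-p(v))\bmod 3$. Counting edges gives $\sum_v\beta(v)\equiv|E(G)|\pmod 3$, and the map $p\leftrightarrow\beta$ is a bijection between valid prescriptions and functions satisfying that sum condition. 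So it suffices to show: every $6$-edge-connected $G$ admits a \emph{$\beta$-orientation} (an orientation with $d^+_D(v)\equiv\beta(v)$ for all $v$) for every $\beta$ with $\sum_v\beta(v)\equiv|E(G)|\pmod 3$.

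Rather than prove this directly, I would prove a more flexible \emph{rooted} statement tailored for induction: if $G$ is connected, $z\in V(G)$ is a distinguished \emph{root}, and $\lambda_G(x,y)\ge 6$ for every pair $x,y\in V(G)\setminus\{z\}$ --- so that only edge cuts meeting $z$ may be small, and $d(z)$ itself may be arbitrary --- then $G$ has an orientation $D$ with $d^+_D(v)\equiv\beta(v)\pmod 3$ for all $v\neq z$, with \emph{no} congruence imposed at $z$. The original theorem follows at once: a $6$-edge-connected $G$ satisfies the hypothesis for any choice of $z$, and once the congruences hold off $z$ the residue of $d^+_D(z)$ is forced to the correct value by $\sum_v d^+_D(v)=|E(G)|$. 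The point of the root is that the inductive reductions --- contracting subgraphs and splitting off edges --- naturally create a vertex at which the connectivity degrades, and calling that vertex the root keeps the reduced instance inside the hypothesis class.

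The induction runs on $|V(G)|+|E(G)|$. First reduction: if $G$ contains a $\mathbb{Z}_3$-connected subgraph $H$ on at least two vertices, contract it. Contraction never decreases a local edge-connectivity, so the rooted hypotheses survive (if $z\in V(H)$, the contraction vertex becomes the new root); the smaller graph, with the induced prescription $\beta'$, has a $\beta'$-orientation by induction; and this pulls back across $H$ because $H$, being $\mathbb{Z}_3$-connected, realizes \emph{every} boundary of the right total --- here the one imposed by the external edges together with $\beta|_{V(H)}$, whose total works out from the global sum condition. (This is the standard closure-under-contraction property of group connectivity.) Hence we may assume $G$ has no nontrivial $\mathbb{Z}_3$-connected subgraph; in particular $G$ is simple (two parallel edges already forming one) and $K_4$-free ($K_4$ being $\mathbb{Z}_3$-connected). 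Second reduction: pick a non-root vertex $v$ and reduce its degree by splitting off pairs of edges at $v$. Mader's splitting-off theorem supplies splittings preserving every $\lambda(x,y)$ with $x,y\neq v$, unless a tight cut blocks them; one selects which pairs to split, together with a small amount of extra surgery at $v$ (pre-orienting or contracting an incident edge), so that after lifting the recursively obtained orientation the out-degree of $v$ lands in the class $\beta(v)$. If every admissible splitting is obstructed, the obstructing cut is a small edge cut avoiding $z$, and this rigid structure is used either to exhibit a $\mathbb{Z}_3$-connected subgraph (contradicting the first reduction) or to cut $G$ and recurse on the two sides.

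The heart of the argument --- and where the improvement from $8$-edge-connected to $6$-edge-connected lives --- is the bookkeeping in the second reduction. A plain splitting at a simple vertex $v$, once lifted back, forces $d^+_D(v)$ into a single residue class (essentially $\tfrac{1}{2}d(v)\bmod 3$), so matching an arbitrary $\beta(v)$ requires interleaving splittings with other local moves, each of which threatens either the local edge-connectivity off the root or the congruences at the other endpoints. Orchestrating these so that every congruence is met, the rooted hypotheses are preserved, and the tight cuts that obstruct splitting are turned into usable structure --- all with only six edges of slack at each vertex --- is the technical crux, to be supplemented by isolating and directly checking the finite list of small graphs with no nontrivial $\mathbb{Z}_3$-connected subgraph that serve as base cases.
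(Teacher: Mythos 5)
This statement is not proved in the paper at all: it is quoted as a known result of Lov\'asz, Thomassen, Wu, and Zhang (2013), so there is no in-paper proof to compare against. Judged on its own terms, your proposal correctly identifies the standard framework (translating to $\beta$-orientations, rooting the induction at a vertex where connectivity is allowed to degrade, contracting $\mathbb{Z}_3$-connected subgraphs, splitting off), but it has two genuine problems. First, the rooted inductive statement you propose is false as formulated. Take $G$ with $V(G)=\{z,u,v\}$, six parallel edges between $u$ and $v$, and a single edge $zu$; then $\lambda_G(x,y)\geq 6$ for all $x,y\in V(G)\setminus\{z\}$ vacuously holds for the one relevant pair. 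With $\beta(v)=0$ every admissible orientation sends a multiple of $3$ of the parallel edges out of $v$, hence a multiple of $3$ of them out of $u$, so $d^+(u)\in\{0,1\}\pmod 3$ and $\beta(u)=2$ is unattainable, even though $\beta(z)$ can be chosen to satisfy the global sum condition. The actual Lov\'asz--Thomassen--Wu--Zhang induction does \emph{not} simply drop the congruence at the root and allow its degree to be arbitrary; it keeps the congruence at $z_0$ but permits $d(z_0)$ to be small, subject to a quantitative condition tying $d(z_0)$ to $\beta(z_0)$ (roughly $d(z_0)\leq 4+|\beta(z_0)|$ with matching parity), and it is exactly this coupling that makes the induction close. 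Your formulation would need to be replaced by theirs, and that is not a cosmetic fix: the choice of inductive statement is where most of the difficulty of the theorem resides.

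Second, even granting a corrected inductive statement, the entire quantitative core of the argument --- how the splitting-off and lifting steps at a non-root vertex are orchestrated so that the congruence at that vertex is met, the local connectivities off the root are preserved, and the obstructing tight cuts are converted into either a contractible $\mathbb{Z}_3$-connected subgraph or a recursive decomposition, all within the slack afforded by six edges rather than eight --- is explicitly deferred in your last paragraph. That is precisely the part of the proof that distinguishes the $6$-edge-connected theorem from Thomassen's earlier $8$-edge-connected result, so as written the proposal is a plausible plan rather than a proof.
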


\citet{kochol01} showed that the $3$-Flow Conjecture is equivalent to the statement that every $5$-edge-connected graph has a nowhere zero $\mathbb{Z}_3$-flow, so the result of \citet{lovasz13} is one step away from the $3$-Flow Conjecture. As stated, both of these results also make significant steps toward the Strong $3$-Flow Conjecture, as they considered $\mathbb{Z}_3$-connectivity. \citet{laili06} proved the Strong $3$-Flow Conjecture holds for planar graphs using the duality with graph colouring. \citet{richteretal16} provided the first direct proof of this result using flows. Their result is Theorem \ref{mainplanar}. 

\begin{theorem}
\label{mainplanar}
Let $G$ be a $3$-edge-connected graph embedded in the plane with at most two specified vertices $d$ and $t$ such that
\begin{itemize}
\item if $d$ exists, then it has degree $3$, $4$, or $5$, has its incident edges oriented and labelled with elements in $\mathbb{Z}_3\setminus\{0\}$, and is in the boundary of the unbounded face,
\item if $t$ exists, then it has degree $3$ and is in the boundary of the unbounded face,
\item there are at most two $3$-cuts, which can only be $\delta(\{d\})$ and $\delta(\{t\})$,
\item if $d$ has degree $5$, then $t$ does not exist, and
\item every vertex not in the boundary of the unbounded face has five edge-disjoint paths to the boundary of the unbounded face. 
\end{itemize}
If $G$ has a valid prescription function, then $G$ has a valid $\mathbb{Z}_3$-flow. 
\end{theorem}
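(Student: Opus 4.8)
The natural route is a minimal‑counterexample argument in the spirit of Steinberg--Younger, closed off at the end by a discharging argument via Euler's formula; the elaborate hypotheses (a defect vertex $d$ of degree up to $5$, a terminal vertex $t$ of degree $3$, at most two $3$-cuts, the five-path condition) are exactly the closure conditions that let such an induction run. Suppose the theorem fails and let $(G,d,t,p)$ be a counterexample minimizing $|V(G)|$, and then $|E(G)|$; fix a plane embedding and let $C$ be the boundary walk of the unbounded face. I would first record the cheap normalizations. $G$ is connected. $G$ has no loop, since a loop contributes $0$ to every flow sum, so it may be deleted and later restored with value $1$. And $G$ has no pair of parallel edges: such a pair can realize any element of $\mathbb{Z}_3$ as the net flow it transmits between its ends, so contracting it (the merged vertex receiving the sum of the two prescriptions) produces a strictly smaller plane graph, still $3$-edge-connected, still satisfying the $3$-cut restriction and the five-path condition, whose valid $\mathbb{Z}_3$-flow lifts to one of $G$. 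Hence $G$ may be assumed simple.

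Next I would extract the structural consequences of the hypotheses. Since $G$ is $3$-edge-connected and every $3$-edge-cut must be $\delta(\{d\})$ or $\delta(\{t\})$, every vertex outside $\{d,t\}$ has degree at least $4$; and since five edge-disjoint paths out of a vertex already use five edges at that vertex, every vertex not on $C$ in fact has degree at least $5$. So the only vertices of degree $3$ or $4$ lie on $C$, those of degree $3$ being exactly $d$ (when $\deg d = 3$) and $t$; all the ``difficulty'' of $G$ is concentrated on the outer walk $C$ together with the one defect vertex.

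The technical core is a catalogue of reducible configurations, each disposed of by deleting or contracting a bounded piece of $G$, applying minimality to the resulting instance, and then propagating the flow back across the piece by solving a small linear system over $\mathbb{Z}_3$ --- a \emph{nonzero} value being available on each edge of the piece because even two free values in $\mathbb{Z}_3 \setminus \{0\}$ suffice to hit any prescribed net flow, while the pre-oriented, pre-labelled edges at $d$ and the bounds $\deg d \in \{3,4,5\}$, $\deg t = 3$ are what make the smaller instances again satisfy the hypotheses. I expect three families: (i) short cuts --- a nontrivial $3$-edge-cut other than $\delta(\{d\}),\delta(\{t\})$, or certain $4$- and $5$-edge-cuts, across which $G$ splits into two smaller instances, the far side collapsing to a fresh defect or terminal vertex on the outer boundary of the near side (this is where a degree-$5$ defect is born, and why $t$ must be excluded when $\deg d = 5$); (ii) boundary configurations on $C$ --- adjacent low-degree vertices, short chords, or a triangular face incident with $d$ or $t$, each removed or rerouted so that $d$ and $t$ either vanish, drop in degree, or migrate along $C$; and (iii) an interior vertex, which has degree at least $5$ but whose five edge-disjoint paths to $C$, via a Mader/Nash-Williams-type splitting-off lemma, let one pair off and suppress its incident edges (or delete it while absorbing its flow into neighbouring prescriptions) to reach a smaller instance still meeting every hypothesis.

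Finally I would run the discharging. Give each vertex $v$ charge $\deg(v) - 4$ and each face $f$ charge $\ell(f) - 4$; by Euler's formula the total is $-8$, and the only negative charges sit on $d$, on $t$, on the low-degree vertices of $C$, and on triangular faces. Assuming no reducible configuration is present, I would push charge from interior and high-degree vertices and from long faces toward $C$, $d$, $t$, and the triangles, and show every object ends nonnegative --- contradicting the total $-8$. I expect the real obstacle to lie not in this final bookkeeping but in the reductions: verifying that each one simultaneously preserves planarity, $3$-edge-connectivity, the restriction that the only $3$-cuts are $\delta(\{d\})$ and $\delta(\{t\})$, the clause that $t$ is absent when $\deg d = 5$, and --- most delicately --- the five-edge-disjoint-paths condition for interior vertices, which interacts badly with splitting off edges near $C$ and is the reason the hypothesis must be phrased so carefully. (One could instead deduce the theorem from the planar coloring duality, but a self-contained flow proof is what is wanted here, since it is what extends to the projective plane.)
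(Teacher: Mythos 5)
A point of reference first: this paper does not prove Theorem \ref{mainplanar} at all --- it is quoted from Richter, Thomassen and Younger and used as a black box; what the paper proves are the two extensions (Theorems \ref{maindts} and \ref{mainft}), whose proofs display the actual template. Measured against that template, your skeleton is right --- minimal counterexample, exclusion of loops, parallel edges and cut vertices, reduction of $2$-robust $4$-, $5$- and internal $6$-edge-cuts, and local reductions (lifts, deletions, contractions) at low-degree boundary vertices --- but two load-bearing components are not. The first is the closing move. These proofs do not end with a global discharging on $\deg(v)-4$ and $\ell(f)-4$: after the cut and chord reductions one proves that $d$ and $t$ exist, then completely determines the local structure around $t$ (its boundary neighbours have degree $4$, its third neighbour is an internal degree-$5$ vertex adjacent to both, and so on), and finally pins the entire outer boundary down to one explicit configuration that is killed by a last contraction (compare DTS4--DTS8 and the paragraph following them). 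Your discharging would have to locate its unavoidable configurations among triangular faces, the only faces of negative charge, and these do not match the reducible configurations in your catalogue, which concern small cuts and boundary vertices; as written there is no argument that the two lists meet, and no evidence the bookkeeping closes.

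The second gap is that none of the reductions is actually performed, and the reductions are where the entire content of the theorem lives. Each one requires: (i) a case split on which side of the cut contains $d$ and which contains $t$ --- this is exactly why the hypotheses forbid a degree-$5$ vertex $d$ together with $t$, and why $4$- and $5$-cuts are only reducible when they fail to separate the specified vertices in particular ways; (ii) a two-stage argument --- contract one side, obtain a valid orientation by minimality, transfer it, then contract the other side to a \emph{new directed vertex} carrying the induced orientation and apply minimality again --- rather than ``solving a small linear system over $\mathbb{Z}_3$'' on the far side; and (iii) verification that the lifted or contracted graph has no new $2$-robust at most $3$-edge-cut and still satisfies the five-path condition (the paper isolates the latter as Lemma \ref{edgedisjoint}). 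Your normalization ``hence $G$ may be assumed simple'' also skips the genuinely delicate case of parallel edges incident with the directed vertex $d$, where contraction produces a vertex that is only partially oriented; one must first orient the other endpoint and only then contract, as in the paper's Claim \ref{dadj}. So the proposal is a plausible road map, but its endgame is not the one that works and none of its intermediate steps is established.
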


The $3$-Flow Conjecture and the Strong $3$-Flow Conjecture for planar graphs are corollaries of Theorem~\ref{mainplanar}.\\

Our aim was to extend the theorem of \citet{richteretal16} to projective planar graphs. In investigating this problem we required new results extending our knowledge of $\mathbb{Z}_3$-connectivity for planar graphs. These results form the basis of this paper. Our proof for the projective planar case applies these results and appears in \citet{paper2}. All of these results also appear in the author's Ph.D. thesis \citep{thesis}. The techniques used for the proofs in this paper build off those of \citet{thomassen12}, \citet{lovasz13}, and \citet{richteretal16}.  \\

We provide two extensions of Theorem \ref{mainplanar}. The first allows two unoriented degree $3$ vertices, instead of only one. This is analogous to a result of \citet{steinbergyounger89} showing that the $3$-Flow Conjecture holds for planar $3$-edge-connected graphs with at most three $3$-edge-cuts. \citet{steinbergyounger89} required this result to prove the $3$-Flow Conjecture for projective planar graphs, just as we require this extension to prove that the Strong $3$-Flow Conjecture holds for projective planar graphs.\\

The second extension allows vertices of low degree on two specified faces of the embedding that have a common vertex, provided that specified vertices $d$ and $t$ do not both exist. This will allow us to work with certain edge-cuts that arise in projective planar graphs. Optimally, such a result would be possible without these restrictions, as this would facilitate solutions to certain configurations in toroidal graphs, rather than just projective planar graphs. However, an analogous result with two specified faces that do not need to have a common vertex, as well as specified vertices $d$ and $t$, is not true, as we will see in Section \ref{projdis}.  \\

In Section \ref{projprelim} we first discuss some of the ideas that will be used throughout the proofs in this paper. 

\section{Preliminaries}
\label{projprelim}

\subsection*{Basic Definitions}
We define an \emph{orientation} of a graph $G$ to be the directed graph $D$ obtained by adding a direction to each edge. With reference to finding a $\mathbb{Z}_3$-flow on $G$ with prescription function $p$, an orientation is \emph{valid} if for each vertex $v\in V(G)$,
$$\sum_{e=(u,v)\in E(D)}1-\sum_{e=(v,w)\in E(D)}1\equiv p(v)\pmod{3}.$$ 
It can be easily seen that this is equivalent to a $\mathbb{Z}_3$-flow.\\

A vertex $d$ in a graph $G$ is a \emph{directed vertex} if all its incident edges are directed. We call this an \emph{orientation} of $d$. We say that an orientation of $G$ \emph{extends} the orientation of $d$ if the direction of the edges at $d$ is maintained. In cases involving a directed vertex we take the term \emph{valid orientation} to include that the orientation extends that of~$d$. \\

We \emph{lift} a pair of edges $uv$ and $vw$ in a graph $G$ by deleting $uv$ and $vw$, and adding an edge $vw$. We define an edge-cut $\delta(A)$ in $G$ to be \emph{internal}\index{internal edge-cut} if either $A$ or $G-A$ does not intersect the boundary of the specified face(s) of $G$. 

\subsection*{Specified Face(s)}

First, we consider the idea of a specified face. Theorem \ref{mainplanar} allows vertices of degree less than $5$ on the boundary of the outer face. However, a graph embedded in a higher genus surface does not have a defined outer face, so the result cannot be directly extended. While the results in this paper deal with planar graphs, and thus could refer to the outer face, to simplify the use of these results in proving the Strong $3$-Flow Conjecture for projective planar graphs or potentially graphs embedded in other surfaces, we define specified faces for all results. \\

Let $G$ be a graph with specified faces $F_G$ and~$F_G^*$. Note that these are not required to both exist. Let $G'$ be a graph obtained from $G$ by one or more operations. Unless otherwise stated, the specified faces $F_{G'}$ and $F_{G'}^*$ are defined as follows:
\begin{enumerate}
\item Suppose that $G'$ is obtained from $G$ by deleting or contracting a connected subgraph of $G$ that has no edge in common with the boundary of $F_G$ or~$F_G^*$. Then $F_{G'}=F_G$ and $F_{G'}^*=F_G^*$. 
\item Suppose that $G'$ is obtained from $G$ by contracting a connected subgraph of $G$ that contains the boundaries of $F_G$ and~$F_G^*$. Then $G'$ has no specified face. Both can be chosen arbitrarily; in general we will chose a specified face incident with the vertex of contraction. 
\item Suppose that $G'$ is obtained from $G$ by deleting an edge $e$ in the boundary of~$F_G$. Let $F$ be the other face incident with~$e$. Then $F_{G'}$ is the face formed by the union of the boundaries of $F$ and $F_G$ (without $e$), and $F_{G'}^*=F_G^*$. Note that if $F=F_G^*$, then $G'$ has only one specified face. The second can be chosen arbitrarily if necessary. 
\item Suppose that $G'$ is obtained from $G$ by deleting a vertex $v$ in the boundary of~$F_G$. Let $F_1$, $F_2$,...,$F_k$ be the other faces incident with~$v$. Then $F_{G'}$ is the face formed by the union of the boundaries of $F_1$, $F_2$,...,$F_k$, and $F_G$ (without the edges incident with $v$), and $F_{G'}^*=F_G^*$. Note that if $F_G^*\in\{F_1,F_2,...,F_k\}$, then $G'$ has only one specified face. The second can be chosen arbitrarily if necessary. 
\item Suppose that $G'$ is obtained from $G$ by contracting a connected subgraph $H$ of $G$ whose intersection with $F_G$ is a path $P$ of length at least one. Then $F_{G'}$ is the face formed by the boundary of $F_G$ without $P$, and $F_{G'}^*=F_G^*$. If the intersection of $H$ with $F_G$ consists of more than one path, this contraction can simply be completed in multiple steps. 
\item Suppose that $G'$ is obtained from $G$ by lifting a pair of adjacent edges $e_1$, $e_2$, where $e_1$ is in the boundary of~$F_G$, $e_2$ is not, and $e_1$ and $e_2$ are consecutive at their common vertex. Let $F_1$ be the other face incident with~$e_1$. Note that $F_1$ is incident with~$e_2$. Let $F_2$ be the other face incident with~$e_2$. Then $F_{G'}$ is the face formed by the union of the boundaries of $F_G$ and $F_2$ (using the lifted edge instead of $e_1$ and $e_2$), and $F_{G'}^*=F_G^*$. Note that if $F_1=F_G^*$, then $F_{G'}^*$ will use the lifted edge instead of $e_1$ and $e_2$, and if $F_2=F_G^*$, then $G'$ has only one specified face. The second can be chosen arbitrarily if necessary. 
\end{enumerate}
When performing these operations we will not explicitly state the new specified faces unless necessary; for example, if we must define a second specified face. 

\subsection*{Edge-Disjoint Paths to the Boundary}

As in \citet{richteretal16}, throughout this paper we are working with graphs for which all vertices not on the boundary of the specified face(s) have at least $5$ edge-disjoint paths to the boundary of the specified face(s). We describe here how reductions to the graph affect this condition. The proof of this result is straightforward, and can be found in \citet{thesis}.

\begin{lemma}
Let $G$ be a graph with specified face $F_G$ such that all vertices not on the boundary of $F_G$ have $5$ edge-disjoint paths to the boundary of~$F_G$. Let $G'$ be a graph obtained from $G$ by 
\begin{enumerate}
\item contracting a subgraph $X$ of $G$ that does not intersect the boundary of $F_G$ to a vertex~$x$,
\item deleting a boundary edge $e$ of~$F_G$,
\item deleting a boundary vertex $x$ of~$F_G$,
\item lifting a pair of adjacent edges $e_1$, $e_2$, where $e_1$ is in the boundary of $F_G$, $e_2$ is not, and $e_1$ and $e_2$ are consecutive at their common vertex, or
\item contracting a subgraph $X$ of $G$ whose intersection with $F_G$ is a path~$P$.
\end{enumerate}
Then all vertices not on the boundary of $F_{G'}$ have $5$ edge-disjoint paths to the boundary of~$F_{G'}$. 
\label{edgedisjoint}
\end{lemma}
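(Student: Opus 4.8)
The plan is to work with the standard reformulation: for a vertex $v$ and a vertex set $B$, ``$v$ has $5$ edge-disjoint paths to $B$'' means there is a family of five pairwise edge-disjoint paths each joining $v$ to some vertex of $B$; by Menger's theorem this is equivalent to the nonexistence of an edge set of size at most $4$ separating $v$ from $B$, and moreover such a family may always be chosen with every path internally disjoint from $B$ (truncate each path at its first vertex of $B$, reading from $v$). Write $B=V(\partial F_G)$ and $B'=V(\partial F_{G'})$. For each of the five operations I would fix a vertex $v$ not on $\partial F_{G'}$ and produce five edge-disjoint $v$--$B'$ paths in $G'$, by first observing (from the rules defining $F_{G'}$ in Section~\ref{projprelim}) that either $v\notin B$ as well, or $v$ is the contraction vertex $x$, which is treated separately; then starting from five edge-disjoint $v$--$B$ paths in $G$ chosen internally disjoint from $B$, and pushing them forward through the operation. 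The recurring point is that every operation only deletes edges, identifies vertices, or introduces a single brand-new edge, so edge-disjointness of a pushed-forward family is automatic: distinct surviving edges of $G$ stay distinct in $G'$, no pre-existing path can use the new edge, and shortcutting a walk to a path only removes edges.

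For the two \emph{contractions} (items (1) and (5)), contracting $X$ turns each of the five paths into an edge-disjoint family of walks in $G'$ once the edges inside $X$ are deleted; shortcutting yields paths. In item (1) we have $F_{G'}=F_G$, so each path ends on $B=B'$; if $v\neq x$ then $v\notin V(X)$ and the pushed path has positive length, so the five survive, while if $v=x$ one instead starts from five edge-disjoint $w$--$B$ paths for an arbitrary $w\in V(X)$ (legitimate since $V(X)\cap B=\emptyset$), each of which uses at least one edge leaving $X$ and hence descends to a positive-length $x$--$B'$ path. In item (5), $B'=(B\setminus V(P))\cup\{x\}$, and the endpoint $w_i$ of each path is either off $P$, hence survives in $B'$, or on $P$, hence maps to $x\in B'$; again $v\notin V(X)$ keeps the pushed paths nondegenerate.

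For the two \emph{deletions} (items (2) and (3)) one checks $B'\supseteq B$ (resp.\ $B'\supseteq (B\setminus\{x\})\cup N(x)$). In item (2), since both ends of the deleted boundary edge $e$ lie in $B$, a path internally disjoint from $B$ cannot use $e$, so the five paths survive verbatim. In item (3), such a $v$ is not adjacent to $x$; any of the five paths ending at $x$ has its penultimate vertex in $N(x)\subseteq B'$, so truncate there (this removes the only edge of that path touching $x$, and leaves positive length as $v\notin N(x)$), while the remaining paths avoid $x$ entirely and keep their endpoints in $B\setminus\{x\}\subseteq B'$. For the \emph{lift} (item (4)) with $e_1=ab\in\partial F_G$, $e_2=bc\notin\partial F_G$, and new edge $e'=ac$: from the rule for lifts, the new boundary edge $e'$ lies on $\partial F_{G'}$, so $a,c\in B'$. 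No path uses $e_1$ (both ends in $B$, as in item (2)), and at most one path uses $e_2$, necessarily as its last edge into $b\in B$, with penultimate vertex $c$; truncate that path at $c\in B'$. The remaining four are untouched and keep endpoints in $B'\supseteq$ (the old boundary).

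The step I expect to be fussiest is item (4): one must read off $\partial F_{G'}$ correctly and, in particular, see that \emph{both} endpoints of $e'$ land on the new boundary, and then dispatch the degenerate sub-cases separately --- notably $\deg_G(b)=2$, where $b$ disappears (the same truncation at $c$ still works), and the cases where the auxiliary faces $F_1$ or $F_2$ coincide with a second specified face $F_G^*$, which change how $F_{G'}^*$ is recorded but not the vertex-set inclusion used above. A second, minor recurring subtlety is ensuring a pushed-forward path is not collapsed to a single vertex by a contraction; this is exactly why we insist $v\notin V(X)$ (equivalently $v\neq x$) in items (1) and (5), and why we pick the auxiliary vertex $w$ inside $X$ in the $v=x$ case. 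Beyond these bookkeeping points the argument is a finite case check with no essential idea other than transporting the path system across each operation, which is why the statement is only recorded as a lemma.
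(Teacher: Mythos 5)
The paper does not actually prove Lemma~\ref{edgedisjoint}; it declares the proof ``straightforward'' and defers to the author's thesis, so there is no in-paper argument to compare against. Your proposal is correct and is exactly the straightforward argument being alluded to: recast the hypothesis via Menger as a path system internally disjoint from the boundary, then transport that system across each operation, truncating at the penultimate vertex for deletions and lifts and shortcutting walks for contractions; your case analysis correctly identifies and handles the only delicate points (the contraction vertex $x$, the degenerate endpoints in the lift, and nondegeneracy of the pushed-forward paths).
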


We therefore only discuss the preservation of this property in cases where Lemma \ref{edgedisjoint} does not apply. In Section \ref{twoface} we consider graphs with two specified faces, having a vertex in common. Lemma \ref{edgedisjoint} applies analogously to the union of these specified faces. 

\subsection*{Minimal Cuts}
Let $G$ be a graph with a directed vertex $d$. An edge-cut $\delta(A)$ in $G$ with $d\in A$ is \emph{$k$-robust} if $|A|\geq 2$ and $|G-A|\geq k$. \\

Throughout this paper we will perform local reductions on graphs. Many of these reductions will involve considering $2$-robust edge-cuts, either because $G$ has a small edge-cut that must be reduced, or because we must verify that the graph resulting from a reduction does not have any small edge-cuts. In all cases, we first consider the smallest possible edge-cuts. Thus, if we consider a $2$-robust $k$-edge-cut $\delta(A)$ in a graph $G$, we may assume that $G$ has no $2$-robust at most $(k-1)$-edge-cut. It may be assumed that either $G[A]$ is connected, or it consists of two isolated vertices whose degrees sum to~$|\delta(A)|$. The same is true of~$G-A$. Given the size of the cuts we consider, generally both $G[A]$ and $G-A$ will be connected. 

\subsection*{Non-Crossing 3-Edge-Cuts}

Let $\delta(A)$ and $\delta(B)$ be distinct edge-cuts in $G$. We say that $\delta(A)$ and $\delta(B)$ \emph{cross} if $A\cap B$, $A\setminus B$, $B\setminus A$, and $\overline{A\cup B}$ are all non-empty. Throughout this paper we consider graphs that are allowed to have non-crossing $2$-robust $3$-edge-cuts under certain restrictions. The following well-known result allows us to assume that such cuts are non-crossing. We only need the case $k=3$.

\begin{lemma}
Let $k$ be an odd positive integer. If $G$ is a $k$-edge-connected graph, then any two $k$-edge-cuts do not cross. 
\end{lemma}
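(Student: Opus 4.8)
The plan is to argue by contradiction, combining submodularity of the edge-boundary function with a parity obstruction coming from the oddness of $k$. Suppose $\delta(A)$ and $\delta(B)$ are $k$-edge-cuts in $G$ that cross, so the four ``quadrants''
$$W_1=A\cap B,\qquad W_2=A\setminus B,\qquad W_3=B\setminus A,\qquad W_4=V(G)\setminus(A\cup B)$$
are all nonempty. Writing $e(X,Y)$ for the number of edges of $G$ with one end in $X$ and one end in $Y$, I would introduce the six quantities $a=e(W_1,W_2)$, $b=e(W_2,W_4)$, $c=e(W_3,W_4)$, $f=e(W_1,W_3)$, $p=e(W_1,W_4)$, $q=e(W_2,W_3)$, the last two counting the two ``diagonal'' classes of edges. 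A direct count then gives $|\delta(A)|=f+b+p+q$, $|\delta(B)|=a+c+p+q$, $|\delta(W_1)|=a+f+p$, $|\delta(W_1\cup W_2\cup W_3)|=b+c+p$, $|\delta(W_2)|=a+b+q$, and $|\delta(W_3)|=f+c+q$.

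Adding these in pairs yields the two identities
$$|\delta(A)|+|\delta(B)|=|\delta(W_1)|+|\delta(W_1\cup W_2\cup W_3)|+2q=|\delta(W_2)|+|\delta(W_3)|+2p,$$
which are just the two standard submodular inequalities turned into equalities by this bookkeeping. Now I would invoke $k$-edge-connectivity: each of $W_1$, $W_1\cup W_2\cup W_3$, $W_2$, $W_3$ is a nonempty proper subset of $V(G)$ (using that every quadrant is nonempty), so each of the four cuts on the right has size at least $k$, while $|\delta(A)|=|\delta(B)|=k$. Substituting into the first identity forces $2k\ge 2k+2q$, hence $q=0$ and $|\delta(W_1)|=|\delta(W_1\cup W_2\cup W_3)|=k$; the second identity similarly forces $p=0$ and $|\delta(W_2)|=|\delta(W_3)|=k$. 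With $p=q=0$ the six size equations collapse to $f+b=a+c=a+f=b+c=a+b=f+c=k$, whence $a=b=c=f$ and therefore $2a=k$. Since $a$ is a nonnegative integer, this contradicts the hypothesis that $k$ is odd, and the lemma follows.

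There is essentially no serious obstacle here: the only point requiring any care is making sure that each of the four sets to which we apply $k$-edge-connectivity is genuinely a nonempty proper subset of $V(G)$, which is exactly where the hypothesis that the cuts cross is used; the remainder is a short counting argument. For the case $k=3$ actually needed in this paper, the final step is the concrete observation that $2a=3$ has no integer solution.
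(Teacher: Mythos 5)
Your argument is correct: the edge-count bookkeeping across the four quadrants, the two submodularity identities $|\delta(A)|+|\delta(B)|=|\delta(W_1)|+|\delta(W_4)|+2q=|\delta(W_2)|+|\delta(W_3)|+2p$, and the forced conclusion $2a=k$ all check out, and the nonemptiness of the quadrants is exactly where crossing is used. The paper itself states this lemma as well known and gives no proof, so there is nothing to compare against; what you have written is the standard submodularity-plus-parity argument and would serve as a complete proof.
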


\section[Increasing the Number of Degree $3$ Vertices]{Increasing the Number of Degree $\mathbf{3}$ Vertices}
\label{planar}

In this section we extend the result in Theorem \ref{mainplanar} to allow a directed vertex $d$ and two other vertices of degree~$3$. An analogous result with three degree $3$ vertices and no directed vertex follows as an immediate corollary. \\

\begin{definition}A \emph{DTS graph} is a graph $G$ embedded in the plane, together with a valid $\mathbb{Z}_3$-prescription function $p: V(G)\rightarrow\{-1,0,1\}$, such that:
\begin{enumerate}
\item $G$ is $3$-edge-connected,
\item $G$ has a specified face $F_G$, and at most three specified vertices $d$, $t$, and~$s$,
\item if $d$ exists, then it has degree $3$, $4$, or $5$, is oriented, and is on the boundary of~$F_G$,
\item if $t$ or $s$ exists, then it has degree $3$ and is on the boundary of~$F_G$,
\item $d$ has degree at most $5-a$ where $a$ is the number of unoriented degree $3$ vertices in~$G$,
\item $G$ has at most three $3$-edge-cuts, which can only be $\delta(d)$, $\delta(t)$, and $\delta(s)$, and
\item every vertex not in the boundary of $F_G$ has $5$ edge-disjoint paths to the boundary of~$F_G$.
\end{enumerate}
We define all $3$-edge-connected graphs on at most two vertices to be DTS graphs, regardless of vertex degrees. \\

A \emph{3DTS graph} is a graph $G$ with the above definition, where (6) is replaced by 
\begin{enumerate}
\item[6'.] all vertices other than $d$, $t$, and $s$ have degree at least $4$, and if $d$, $t$, and $s$ all exist, then every $3$-edge-cut in $G$ separates one of $d$, $t$, and $s$ from the other two.
\end{enumerate}
\end{definition}
We note that the $3$-edge-cuts allowed by (6') are not internal. \\

Our main result for this section is Theorem \ref{maindts}. 
\begin{theorem}
Every DTS graph has a valid orientation. 
\label{maindts}
\end{theorem}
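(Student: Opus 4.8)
The plan is to prove Theorem \ref{maindts} by induction on $|V(G)| + |E(G)|$, following the strategy of \citet{richteretal16} and \citet{thomassen12}. The base case is handled by the convention that all $3$-edge-connected graphs on at most two vertices are DTS graphs: such a graph is a pair of vertices joined by $k \geq 3$ parallel edges (or a single vertex), and one checks directly that any valid prescription is met by an appropriate orientation of the parallel edges (adjusting the orientation of one edge changes the net flow by $\pm 2 \equiv \mp 1$, so all residues are reachable), consistent with any orientation forced at $d$.

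For the inductive step, I would first dispose of the case where $G$ has a $2$-robust edge-cut of size $3$, $4$, or $5$ — more precisely, a "bad" small edge-cut not accounted for by the allowed $3$-edge-cuts $\delta(d), \delta(t), \delta(s)$. Here the standard move is: take a smallest such $2$-robust cut $\delta(A)$ with $d \in A$, contract $G - A$ to a single vertex to form $G_1$ and contract $G[A]$ to a single vertex to form $G_2$, transfer the prescription appropriately (the contracted vertex inherits the summed prescription, possibly made into a directed vertex using a partial orientation obtained from the other side), check that both $G_1$ and $G_2$ are again DTS graphs (here the bookkeeping on specified faces from Section \ref{projprelim}, the edge-disjoint-paths condition via Lemma \ref{edgedisjoint}, and the degree constraint (5) relating $\deg(d)$ to the number of unoriented degree-$3$ vertices all must be re-verified), apply induction to each, and glue the two orientations along the cut. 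The delicate accounting is that a degree-$3$ contracted vertex may need to play the role of $t$ or $s$, so one must ensure we never exceed three degree-$3$ specified vertices and that the constraint $\deg(d) \leq 5 - a$ survives — this is exactly why the theorem is stated with that sliding bound.

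Once $G$ has no bad $2$-robust small cut, the graph is "internally $6$-edge-connected" in the relevant sense, and I would run the discharging / local-reduction heart of the argument on the boundary of $F_G$. The idea is to find a vertex $v$ on the boundary of $F_G$ of small degree (degree $3$, $4$, or $5$, with the count of such low-degree boundary vertices controlled by the face structure and the DTS hypotheses), and perform a reduction at $v$: either lift a suitable pair of edges at $v$ (one boundary, one not — case 6 of the specified-face rules and case 4 of Lemma \ref{edgedisjoint}), or delete $v$ or a boundary edge, or contract a boundary path, in each case producing a smaller graph $G'$ that is still a DTS graph (possibly after promoting $v$'s neighbour, or $v$ itself, to a directed vertex $d$ with a carefully chosen orientation forced by the prescription and the structure near $v$). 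Applying induction to $G'$ and then extending the orientation back across the reduced edges — using that at a degree-$3$ or degree-$4$ vertex the residual freedom in orienting its edges is exactly enough to hit any prescribed residue — completes the step. The new content beyond \citet{richteretal16} is allowing two unoriented degree-$3$ vertices $t$ and $s$ simultaneously, so throughout one must track that reductions near $t$ or $s$ do not create a fourth low-degree anomaly and that when $d$ has degree $4$ or $5$ the count $a$ of unoriented degree-$3$ vertices stays within budget.

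**The main obstacle** I anticipate is the case analysis in the local reduction when the small-degree boundary vertex $v$ is close to — or equals — one of the special vertices $d$, $t$, $s$, or when two of these special vertices are adjacent or share neighbours. In those configurations a naive lift or deletion can either disconnect the graph below $3$-edge-connectivity, create a new $2$-robust $3$-, $4$-, or $5$-edge-cut that is not of the permitted form, or violate the degree bound (5)/(6') on $d$; handling them requires choosing the reduction (which pair to lift, or whether to contract instead) in a way that is compatible with all of the DTS conditions at once, and this is where most of the detailed work — and the interplay with the planar/face structure that rules out bad cuts via the non-crossing lemma — will lie.
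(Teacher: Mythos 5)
Your outline matches the paper's strategy in broad strokes: a minimal counterexample (the paper orders by $(|E(G)|,|E(G)|-\deg(d))$, essentially your induction), reduction of small $2$-robust cuts by contracting each side and gluing, and then local reductions (lifts, deletions, contractions) at low-degree boundary vertices, with the difficulty concentrated near $d$, $s$, $t$. But there are two genuine gaps. First, your plan presupposes that some local reduction at a low-degree boundary vertex always succeeds, and this is false: the paper's proof consists precisely of showing that when every such reduction fails, the graph is forced into a rigid terminal configuration (properties DTS5--DTS8: $s$ and $t$ are non-adjacent, their boundary neighbours $u,v,x,y$ all have degree $4$, their third neighbours $w,z$ are internal of degree $5$, and the boundary of $F_G$ is exactly $d,s,t,u,v,x,y$ with an identification such as $v=x$). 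The final contradiction is then obtained not by a local reduction at a single vertex but by exhibiting the internal $2$-robust $7$-edge-cut $\delta(\{u,t,v,s,y,d,w,z\})$ and reducing across it, deleting two edges at $w$ so the contracted directed vertex has degree $5$ with no degree-$3$ vertices remaining. Your proposal has no analogue of this endgame, and without it the induction does not close.

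Second, you are missing the auxiliary class the paper calls 3DTS graphs (condition (6) weakened to: every $3$-edge-cut separates one of $d,t,s$ from the other two) and the accompanying Claim that all smaller 3DTS graphs have valid orientations. This device is invoked repeatedly: many of the reductions at $s$, $t$, and their neighbours produce graphs that \emph{do} have $2$-robust $3$-edge-cuts and hence are not DTS graphs, but all such cuts separate the three special vertices in the permitted way, so the 3DTS claim (proved by a further cut decomposition) rescues the reduction. Without this intermediate class, several of your proposed lifts and contractions would produce graphs to which the inductive hypothesis simply does not apply. Relatedly, your suggestion that $2$-robust cuts of sizes $3$ through $5$ can be handled uniformly is too strong; the paper's DTS2 must restrict which sides contain $s$ and $t$ (a $4$-cut is reducible only if $G-A$ contains at most one of $s,t$, a $5$-cut only if it contains neither, a $6$-cut only if internal), exactly because otherwise the contracted directed vertex violates the bound $\deg(d)\leq 5-a$.
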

\begin{proof}

Let $G$ be a minimal counterexample with respect to the lexicographic ordering of the pairs $(|E(G)|,|E(G)|-deg(d))$. If $|E(G)|=0$, then $G$ consists of only an isolated vertex, and thus has a trivial valid orientation. If $|E(G)|-deg(d)=0$, then $G$ has an existing valid orientation. Thus we may assume $G$ has at least one unoriented edge. \\

We will establish the following series of properties of $G$. 
\begin{description}
\item[DTS1:] The graph $G$ does not contain a loop, unoriented parallel edges, or a cut vertex.
\item[DTS2:] The graph $G$ does not contain
\begin{enumerate}
\item[a)] a $2$-robust $4$-edge-cut, $\delta_G(A)$, where $d\in A$ and $G-A$ contains at most one of $s$ and~$t$,
\item[b)] a $2$-robust $5$-edge-cut, $\delta_G(A)$, where $d\in A$ and $G-A$ contains neither $s$ nor $t$, or
\item[c)] an internal $2$-robust $6$-edge-cut.
\end{enumerate}
\item[DTS3:] If $e=uv$ is a chord of $F_G$ incident with a vertex $u$ of degree at most $4$, then $deg(u)=4$, $e$ separates $d$ from both $s$ and $t$, and $u$ is incident with $e$ and one other edge in the side containing $d$, while on the side containing $s$ and $t$, $u$ is incident with $e$ and two other edges.
\item[DTS4:] Vertices $d$, $s$, and $t$ exist in $G$.
\item[DTS5:] Vertices $s$ and $t$ are not adjacent.
\end{description}
Let $u$ and $v$ be the boundary vertices adjacent to $t$, and let $w$ be the remaining vertex adjacent to~$t$. Let $x$ and $y$ be the boundary vertices adjacent to $s$, and let $z$ be the remaining vertex adjacent to~$s$.
\begin{description}
\item[DTS6:] Vertices $u$, $v$, $x$, and $y$ have degree $4$.
\item[DTS7:] Edges $uw$, $vw$, $xz$, and $yz$ exist, and $w$ and $z$ have degree $5$.
\item[DTS8:] The vertices $d$, $s$, $t$, $u$, $v$, $x$, and $y$ form the boundary of $F_G$, where either $v=x$ or $u=z$ (up to renaming).
\end{description}

Verifying these properties forms the bulk of the proof of Theorem \ref{maindts}. Property \ref{basicdts} is straightforward, and the proof is omitted. It can be found in \citep{thesis}.\\

\begin{Prop}{DTS1}
The graph $G$ does not contain a loop, unoriented parallel edges, or a cut vertex. 
\label{basicdts}
\end{Prop}
%

Since $G$ has no cut vertex, every face in $G$ is bounded by a cycle. We now consider the presence of small $2$-robust edge-cuts in~$G$. Recall that by definition, $G$ has no $2$-robust at most $3$-edge-cut. Property \ref{cutsdts} categorises three cases that can always be reduced. This is not a comprehensive list, but other small cuts arise arise in special circumstances and will be reduced on a case by case basis. 

\begin{Prop}{DTS2}
The graph $G$ does not contain
\begin{enumerate}
\item[a)] a $2$-robust $4$-edge-cut, $\delta_G(A)$, where $d\in A$ and $G-A$ contains at most one of $s$ and~$t$,
\item[b)] a $2$-robust $5$-edge-cut, $\delta_G(A)$, where $d\in A$ and $G-A$ contains neither $s$ nor $t$, or
\item[c)] an internal $2$-robust $6$-edge-cut.
\end{enumerate}
\label{cutsdts}
\end{Prop}
\begin{proof}\mbox{}
\begin{enumerate}
\item[a)] Suppose that $G$ does contain a $2$-robust $4$-edge-cut, $\delta_G(A)$, with $A$ chosen so that $d\in A$ and $G-A$ contains at most one of $s$ and~$t$. Let $G'$ be the graph obtained from $G$ by contracting $G-A$ to a single vertex. The resulting vertex $v$ has degree~$4$. Since every vertex not incident with $F_G$ has $5$ edge-disjoint paths to the boundary of $F_G$, $F_G$ is incident with edges in $\delta_{G}(A)$. Therefore, $v$ is incident with~$F_{G'}$. If $G'$ contains a cut $\delta_{G'}(B)$ of size at most $3$, then such a cut also exists in $G$, a contradiction unless it is one of the specified vertices. Hence $G'$ is a DTS graph and has a valid orientation by the minimality of~$G$. Transfer this orientation to~$G$. \\

Let $G''$ be the graph obtained from $G$ by contracting $A$ to a single vertex~$d'$. This vertex has degree $4$ and is oriented. Since $d\in V(A)$, $G''$ has only one oriented vertex, which is~$d'$. Since $F_G$ is incident with edges in $\delta_G(A)$, $d'$ is incident with~$F_{G''}$. If $G''$ has a cut $\delta_{G''}(B)$ of size at most $3$, then such a cut also exists in $G$, a contradiction unless it is one of the specified vertices. Thus $G''$ is a DTS graph and has a valid orientation by the minimality of~$G$. Transfer this orientation to $G$ to obtain a valid orientation of $G$, a contradiction. Hence any $4$-edge-cut in $G$ separates $d$ from both $s$ and~$t$.  

\item[b)] This case works in the same way as~a). In $G''$, there is a degree $5$ oriented vertex, and no degree $3$ vertices. Therefore, any $5$-edge-cut in $G$ separates $d$ from an unoriented degree $3$ vertex. 

\item[c)] Contract $G-A$ to a vertex $v$, calling the resulting graph~$G'$. As in a), it is clear that $G'$ is a DTS graph. Therefore, by the minimality of $G$, $G'$ has a valid orientation. Transfer this orientation to~$G$.\\ 

Contract $A$ to a vertex $d'$, delete a boundary edge $e$ incident with $d'$, and call the resulting graph~$G''$. Since the boundary of $F_G$ is contained in $A$, the endpoint of $e$ in $G-A$ has degree at least $5$ in $G$, and therefore has degree at least $4$ in~$G''$. If $G''$ contains an edge-cut $\delta_{G''}(B)$, then either $\delta_G(B)$ or $\delta_G(G-B)$ is a cut in $G$ of size at most one greater. If $G''$ contains a $2$-robust edge cut $\delta_{G''}(B)$ of size three or less, then $G$ contains an analogous internal edge-cut of size four or less. Such a cut does not exist by definition. Thus $G''$ contains no such at most $3$-edge-cut, and is a DTS graph. Therefore, by the minimality of $G$, $G''$ has a valid orientation. Transfer this orientation to $G$ to obtain a valid orientation of $G$, a contradiction. Hence any $6$-edge-cut in $G$ contains edges in the boundary of~$F_G$. \qedhere
\end{enumerate}
\end{proof}\medskip

In the absence of structures such as loops, parallel edges, cut vertices and small edge-cuts, we reduce at low degree vertices in~$G$. It is useful to consider the possible adjacencies of such vertices. We say that a chord $e=\{u,v\}$ of $F_G$ \emph{separates} vertices $x$ and $y$ if $x$ and $y$ are in different components of $F_G-\{u,v\}$. 

\begin{Prop}{DTS3}
If $e=uv$ is a chord of $F_G$ incident with a vertex $u$ of degree at most $4$, then 
\begin{enumerate}
\item[a)] $deg(u)=4$, 
\item[b)] $e$ separates $d$ from both $s$ and $t$, 
\item[c)] $u$ is incident with $e$ and one other edge in the side containing $d$, and 
\item[d)] $u$ is incident with $e$ and two other edges on the side containing $s$ and $t$. 
\end{enumerate}
\label{chordsdts}
\end{Prop}
\begin{proof}
Let $H$ and $K$ be subgraphs of $G$ such that $H\cap K=\{\{u,v\},\{uv\}\}$, $H\cup K=G$, and $d$, if it exists, is in~$H$.\\

Suppose that $\delta(H)$ is not $2$-robust. Then $|V(K)|=3$, and $K$ contains $d$, else $G$ has unoriented parallel edges, contradicting \ref{basicdts}. By definition, either $u$ or $v$ is~$d$. Since $G$ does not contain unoriented parallel edges $deg_H(d)=2$, and $|\delta((H-\{u,v\})\cup\{d\})|=3$, a contradiction. Hence we may assume that $\delta(H)$ is $2$-robust.\\

Suppose that $\delta(K)$ is not $2$-robust. Then $|V(H)|=3$. If $u$ or $v$ is $d$, the same argument applies. Thus we may assume that $d$ is in $V(H)-\{u,v\}$. If there are parallel edges with endpoints $d$ and $u$, then $\delta(\{d,u\})$ is an at most $5$-edge-cut. Orient $u$ and contract the parallel edges between $d$ and $u$, calling the resulting graph~$G'$. Note that the vertex of contraction has the same degree as~$d$. Hence it is clear that $G'$ is a DTS graph, and thus has a valid orientation by the minimality of~$G$. This leads to a valid orientation of $G$, a contradiction. \\

Thus we may assume that there are not parallel edges with endpoints $d$ and~$u$. Then there are parallel edges with endpoints $d$ and~$v$. Since $|\delta(\{d,v\})|\geq 4$, $deg_K(v)\geq 3$. If $deg(u)=3$, then $deg_K(v)\geq 4$, else $|\delta(H)|=3$, a contradiction. Orient $u$ and add a directed edge from $u$ to $v$ in~$K$. Then $K$ is a DTS graph, and has a valid orientation by the minimality of~$G$. This leads to a valid orientation of $G$, a contradiction. Thus $deg(u)=4$. If $K-V(H)$ does not contain both $s$ and $t$, the same argument applies. Thus $s,t\in K-V(H)$, and the chord separates $d$ from $s$ and~$t$. Since there are not parallel edges with endpoints $u$ and $d$, $deg_K(u)=3$. We may now assume that $\delta(K)$ is $2$-robust. \\

By definition, $\delta(H)$ and $\delta(K)$ have size at least~$4$. Hence $deg_G(v)\geq 6$, and so $v$ is not $d$, $s$, or~$t$. In addition, $v$ has degree at least $3$ in both $H$ and~$K$. \\

Suppose that $u\neq d$. Then in $H$, contract~$uv$. The graph $H/uv$ is a DTS graph, and so by the minimality of $G$, $H/uv$ has a valid orientation. Transfer this orientation to $G$, and orient~$u$. In $K$, if $u$ has degree $2$, add an edge $e'$ directed from $u$ to $v$ (in the boundary of~$F_K$). Suppose that $v$ has degree $3$ in $K$ and $K$ does not contain both $s$ and $t$. Since $deg_G(u)\in\{3,4\}$ it is clear that $u$, $v$ are incident with $F_K$, and thus $K+e'$ is a DTS graph. By the minimality of $G$, $K+e'$ has a valid orientation. This leads to a valid orientation of $G$, a contradiction. The remaining case is the one where $v$ has degree $3$ in $K$, and $K$ contains both $s$ and $t$. Then $e$ separates $d$ from $s$ and $t$, as required. \\

Thus $u=d$. Then in both $H$ and $K$ if $deg(u)=2$, add a directed edge $e'$ from $u$ to $v$ (in the boundary of $F_H$ and~$F_K$). If $K$ contains both $s$ and $t$, then $deg(u)=3$, so in $K$, $deg(v)\geq 4$. It is clear that $H+e'$ and $K+e'$ are DTS graphs, so by the minimality of $G$, they have valid orientations. Transfer the orientations of $H+e'$ and $K+e'$ to $G$ to obtain a valid orientation of $G$, a contradiction. Thus no such chord exists. 
\end{proof}

Since we reduce at low degree vertices in $G$, we are especially interested in the specified vertices $d$, $s$, and~$t$. We must first establish the existence of these vertices. The following two claims are needed in order to prove \ref{existsdts}. 

\begin{claim}
If $d$ exists, then it is not adjacent to a vertex of degree at most $4$ via parallel edges.
\label{dadj}
\end{claim}
\begin{proof}
Suppose that $d$ is adjacent to $t$ via parallel edges. Then $|\delta(\{d,t\})|=deg(d)-1\leq 3$ (since $t$ exists, $deg(d)\leq 4$). Since $|\delta(\{d,t\})|\geq 3$, equality holds. Thus $deg(d)=4$ and $s$ does not exist. It follows that $\delta(\{d,t\})$ is a $2$-robust $3$-edge-cut, a contradiction. The same is true of $d$ and $s$.\\

Suppose that $d$ is adjacent to a vertex $v$ of degree $4$ via parallel edges. If they are adjacent via at least $3$ parallel edges, then either $deg(d)=4$ and $\delta(\{d,v\})$ is an at most $2$-edge-cut, or $deg(d)=5$ and $\delta(\{d,v\})$ is an at most $3$-edge-cut, a contradiction. If $s$ and $t$ exist, then $deg(d)=3$, and $\delta(\{d,v\})$ is a $2$-robust at most $3$-edge-cut, a contradiction. Otherwise, orient $v$ and contract $\{d,v\}$, calling the resulting graph $G'$. Then $G'$ has a directed vertex of degree at most $deg_G(d)$, and thus is a DTS graph. By the minimality of $G$, $G'$ has a valid orientation. This leads to a valid orientation of $G$, a contradiction. 
\end{proof}

\begin{claim}
If $d$ exists, then it is not adjacent to a vertex of degree at most $3$.
\label{nonadj}
\end{claim}
\begin{proof}
Suppose that $d$ is adjacent to $t$. Orient $t$ and contract $\{d,t\}$ calling the resulting graph~$G'$. Then $G'$ is a DTS graph and has a valid orientation by the minimality of~$G$. This leads to a valid orientation of $G$, a contradiction.\end{proof}

\begin{Prop}{DTS4} 
Vertices $d$, $s$, and $t$ exist in $G$. 
\label{existsdts}
\end{Prop}
\begin{proof}
Suppose that $d$ does not exist. Let $v$ be a vertex in the boundary of $F_G$ of minimum degree. If $deg(v)\leq 5$, orient $v$, calling the resulting graph~$G'$. Then $G'$ is a DTS graph, and has a valid orientation by the minimality of~$G$. This is a valid orientation of $G$, a contradiction. If $deg(v)\geq 6$, orient and delete a boundary edge incident with $v$, calling the resulting graph~$G'$. If $G'$ has a $2$-robust at most $3$-edge-cut, then $G$ has a $2$-robust at most $4$-edge-cut, a contradiction. Thus $G'$ is a DTS graph and has a valid orientation by the minimality of $G$. This leads to a valid orientation of $G$, a contradiction. Hence $d$ exists. \\

Suppose that $G$ has at most one unoriented vertex ($t$) of degree~$3$. If $d$ has degree $4$ or $5$, let $v$ be a vertex adjacent to $d$ on the boundary of $F_G$ that has degree at least $4$ (such a vertex exists by Claim~\ref{nonadj}). Let $G'$ be the graph obtained by deleting~$dv$. If $G'$ has a $2$-robust at most $3$-edge-cut, then $G$ has a corresponding at most $4$-edge-cut, a contradiction. Thus $G'$ is a DTS graph and has a valid orientation by the minimality of~$G$. This leads to a valid orientation of $G$, a contradiction. \\

Thus $d$ has degree~$3$. By Claim \ref{nonadj}, $d$ is not adjacent to $t$. Delete $d$, calling the resulting graph $G'$. By Claim \ref{dadj}, $G'$ does not have a vertex of degree at most $2$. If $G'$ has a $2$-robust at most $3$-edge-cut, then $G$ has a corresponding at most $4$-edge-cut, a contradiction. If $G'$ contains at most one vertex of degree $3$, then $G'$ is a DTS graph and has a valid orientation by the minimality of~$G$.  Otherwise $G'$ contains two vertices of degree $3$. Orient one, calling the resulting graph $G''$. Then $G''$ is a DTS graph and has a valid orientation by the minimality of~$G$. All cases lead to a valid orientation of $G$, a contradiction. Hence $s$ and $t$ exist. 
\end{proof}

Since $s$ and $t$ exist, $deg(d)=3$. In the following reductions, the graphs produced may be 3DTS graphs. We show that such graphs have a valid orientation. 

\begin{claim}
 \label{3cutsdts}If $G'$ is a 3DTS graph with $|E(G')|<|E(G)|$, then $G'$ has a valid orientation.
\end{claim}
\begin{proof}
Let $G'$ be a minimal counterexample with respect to~$|E(G')|$. Suppose that $G'$ has no $2$-robust $3$-edge-cut. Then $G'$ is a DTS graph and has a valid orientation by the minimality of $G$, a contradiction. Thus we may assume that $G'$ has a $2$-robust $3$-edge-cut $\delta_{G'}(A)$, where $A$ is chosen so that $d\in A$. We may assume that $t\not\in A$. \\

Let $G_1$ be the graph obtained from $G'$ by contracting~$G-A$. Then $G_1$ is a 3DTS graph and has a valid orientation by the minimality of~$G'$. Transfer this orientation to~$G'$. Let $G_2$ be the graph obtained from $G'$ by contracting $A$ to a directed vertex~$d'$. Then $G_2$ is a 3DTS graph and has a valid orientation by the minimality of~$G'$. This leads to a valid orientation of $G'$, a contradiction. 
\end{proof}

\begin{Prop}{DTS5}
Vertices $s$ and $t$ are not adjacent. 
\label{stnotadj}
\end{Prop}
\begin{proof}
Suppose for a contradiction that $s$ and $t$ are adjacent. Let $u$ and $v$ be the boundary neighbours of $s$ and $t$ respectively. We prove the following claims:
\begin{enumerate}
\item[a.] Vertices $s$ and $t$ have a common internal neighbour $w$ of degree $5$, $deg(u)=4$, and $deg(v)=4$.
\item[b.] Vertices $u$ and $w$ are adjacent.
\end{enumerate}

\begin{Inclaim}{DTS5a}
Vertices $s$ and $t$ have a common internal neighbour $w$ of degree $5$, $deg(u)=4$, and $deg(v)=4$.
\end{Inclaim}
\begin{proof}
Assume that either $s$ and $t$ do not have a common internal neighbour of degree~$5$, or at least one of the vertices $u$ and $v$ has degree at least $5$. Let $G'$ be the graph obtained from $G$ by orienting and deleting $s$ and~$t$. Then $G'$ has at most two vertices of degree $3$: two of $u$, $v$, and a possible common neighbour of $s$ and $t$. \\

We now check that $G'$ has no $2$-robust $2$- or $3$-edge-cuts. The cases are indicated in italics. We follow a similar process in most future cases. \emph{Suppose that $G'$ contains a $2$-robust at most $2$-edge-cut $\delta_{G'}(A)$ where $u$ and $v$ are in~$A$.} By definition this cut does not exist in $G$, and so we may assume that $\delta_G(G'-A)$ is an internal cut. We make this assumption without mention in future cases. Then $\delta_G(A\cup\{s,t\})$ is a $2$-robust at most $4$-edge-cut that does not separate $d$ from $s$ or $t$, a contradiction. \emph{Suppose that $G'$ contains a $2$-robust at most $2$-edge-cut $\delta_{G'}(A)$ where $u\in A$ and $v\not\in A$.} Then $\delta_G(A\cup \{s\})$ is a $2$-robust at most $4$-edge-cut that separates $s$ from $t$, a contradiction. \\

\emph{Suppose that $G'$ contains a $2$-robust $3$-edge-cut $\delta_{G'}(A)$ where $u$ and $v$ are in~$A$.} Then $\delta_G(A\cup\{s,t\})$ is a $2$-robust at most $5$-edge-cut that does not separate $d$ from $s$ or $t$, a contradiction. \emph{Suppose that $G'$ contains a $2$-robust $3$-edge-cut $\delta_{G'}(A)$ where $u\in A$ and $v\not\in A$.} Then all such cuts separate $u$ and $v$, so by Claim \ref{3cutsdts}, if $s$ and $t$ do not have a common neighbour of degree $5$, then $G'$ has a valid orientation. Hence we may assume that $s$ and $t$ have a common internal neighbour $w$ of degree $5$. Note that in $G'$, $w$ has degree~$3$. Suppose that $d\in A$. By Claim \ref{3cutsdts}, $w\in A$, and $u$ is the other vertex of degree $3$ in $G'$. Then $\delta_G(G'-A)$ is a $2$-robust $4$-edge-cut in $G$ that does not separate $d$ from $s$ or $t$, a contradiction. Figure \ref{dtsone} shows an analysis of these cuts.\\

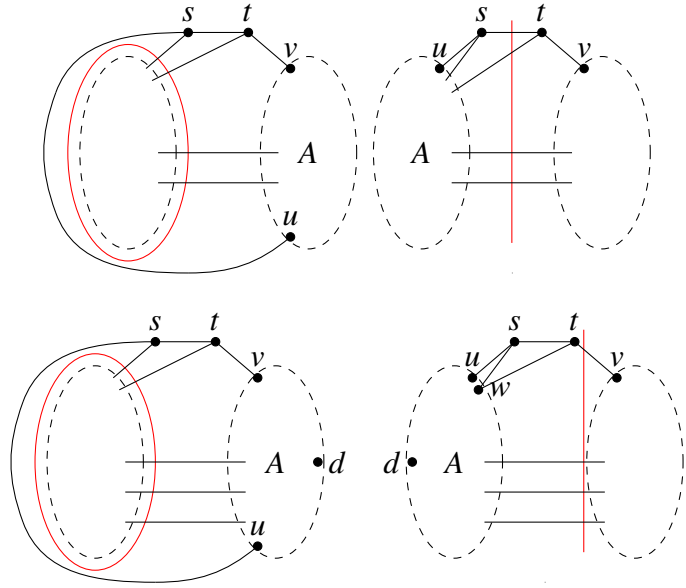
\begin{figure}
\begin{center}
\subfloat{
\begin{tikzpicture}[scale=0.8]
\draw[dashed] (1.5,0) ellipse (0.8cm and 1.6cm) node {$A$};
\draw[dashed] (-1.5,0) ellipse (0.8cm and 1.6cm);
\draw[red] (-1.5,0) ellipse (1cm and 1.8cm);
\filldraw (0.5,2) circle (2pt) node[above] {$t$};
\filldraw (-0.5,2) circle (2pt) node[above] {$s$};
\draw (-1,0) -- (1,0);
\draw (-1,-0.5) -- (1,-0.5);
\draw (0.5,2) -- (1.2,1.4);
\filldraw (1.2,1.4) circle (2pt) node[above] {$v$};
\draw (-0.5,2) -- (0.5,2);
\draw (-0.5,2) -- (-1.2,1.4);
\draw (0.5,2) -- (-1.1,1.2);
\filldraw (1.2,-1.4) circle (2pt) node[above] {$u$};
\draw (-0.5,2) to[out=180,in=70] (-2.7,1);
\draw (-2.7,1) to[out=-110,in=110] (-2.7,-1);
\draw (-0.5,-2) to[out=-180,in=-70] (-2.7,-1);
\draw (-0.5,-2) to[out=0,in=-140] (1.2,-1.4);
\end{tikzpicture}
}
\subfloat{
\begin{tikzpicture}[scale=0.8]
\draw[dashed] (1.5,0) ellipse (0.8cm and 1.6cm);
\draw[dashed] (-1.5,0) ellipse (0.8cm and 1.6cm) node {$A$};
\draw[red] (0,2.2) -- (0,-1.5);
\filldraw (0,-2) circle (0pt);
\filldraw (0.5,2) circle (2pt) node[above] {$t$};
\filldraw (-0.5,2) circle (2pt) node[above] {$s$};
\draw (-1,0) -- (1,0);
\draw (-1,-0.5) -- (1,-0.5);
\draw (0.5,2) -- (1.2,1.4);
\filldraw (1.2,1.4) circle (2pt) node[above] {$v$};
\draw (-0.5,2) -- (0.5,2);
\draw (-0.5,2) -- (-1.2,1.4);
\draw (-0.5,2) -- (-1.1,1.2);
\draw (0.5,2) -- (-1,1);
\filldraw (-1.2,1.4) circle (2pt) node[above] {$u$};
\end{tikzpicture}
}

\subfloat{
\begin{tikzpicture}[scale=0.8]
\draw[dashed] (1.5,0) ellipse (0.8cm and 1.6cm) node {$A$};
\draw[dashed] (-1.5,0) ellipse (0.8cm and 1.6cm);
\draw[red] (-1.5,0) ellipse (1cm and 1.8cm);
\filldraw (0.5,2) circle (2pt) node[above] {$t$};
\filldraw (-0.5,2) circle (2pt) node[above] {$s$};
\draw (-1,0) -- (1,0);
\draw (-1,-0.5) -- (1,-0.5);
\draw (-1,-1) -- (1,-1);
\draw (0.5,2) -- (1.2,1.4);
\filldraw (1.2,1.4) circle (2pt) node[above] {$v$};
\draw (-0.5,2) -- (0.5,2);
\draw (-0.5,2) -- (-1.2,1.4);
\draw (0.5,2) -- (-1.1,1.2);
\filldraw (1.2,-1.4) circle (2pt) node[above] {$u$};
\draw (-0.5,2) to[out=180,in=70] (-2.7,1);
\draw (-2.7,1) to[out=-110,in=110] (-2.7,-1);
\draw (-0.5,-2) to[out=-180,in=-70] (-2.7,-1);
\draw (-0.5,-2) to[out=0,in=-140] (1.2,-1.4);

\filldraw (2.2,0) circle (2pt) node[right] {$d$};
\end{tikzpicture}
}
\subfloat{
\begin{tikzpicture}[scale=0.8]
\draw[dashed] (1.5,0) ellipse (0.8cm and 1.6cm);
\draw[dashed] (-1.5,0) ellipse (0.8cm and 1.6cm) node {$A$};
\filldraw (0.5,2) circle (2pt) node[above] {$t$};
\filldraw (-0.5,2) circle (2pt) node[above] {$s$};
\draw[red] (0.65,2.2) -- (0.65,-1.5);
\draw (-0.5,2) -- (0.5,2);
\draw (-0.5,2) -- (-1.2,1.4);
\draw (0.5,2) -- (1.2,1.4);
\draw (-0.5,2) -- (-1.1,1.2);
\draw (0.5,2) -- (-1.1,1.2);
\filldraw (-1.1,1.2) circle (2pt) node[right] {$w$};
\filldraw (0,-2) circle (0pt);
\draw (-1,0) -- (1,0);
\draw (-1,-0.5) -- (1,-0.5);
\draw (-1,-1) -- (1,-1);
\filldraw (1.2,1.4) circle (2pt) node[above] {$v$};
\filldraw (-1.2,1.4) circle (2pt) node[above] {$u$};
\filldraw (-2.2,0) circle (2pt) node[left] {$d$};
\end{tikzpicture}
}
\caption{\ref{stnotadj}: Analysis of cuts (1).}
\label{dtsone}
\end{center}
\end{figure}

We conclude that $G'$ is a DTS graph and has a valid orientation by the minimality of~$G$. This leads to a valid orientation of $G$, a contradiction. 
\end{proof}

Therefore $s$ and $t$ have a common internal neighbour~$w$ of degree $5$, $deg(u)=4$, and $deg(v)=4$. Let the edges incident with $u$ be $e_1$, $e_2$, $e_3$, $e_s$ in order, where $e_1$ is on the boundary of~$F_G$ and $e_s=su$. By \ref{chordsdts}, $e_3$ is not a chord. The same is true at~$v$. 

\begin{Inclaim}{DTS5b}
Vertices $u$ and $w$ are adjacent.
\end{Inclaim}
\begin{proof}
Suppose that $u$ and $w$ are not adjacent. Lift the pair of edges $e_1$, $e_2$, and orient and delete $u$, $s$, and $t$, calling the resulting graph~$G'$. This reduction can be seen in Figure~\ref{dtstwo}. Then $G'$ has at most two unoriented vertices of degree $3$ ($v$ and~$w$). \\

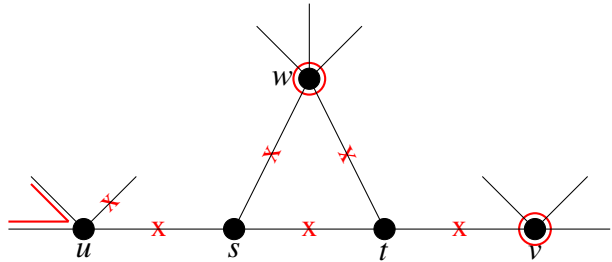
\begin{figure}
\begin{center}
\begin{tikzpicture}[scale = 2]
\filldraw (-0.5,0) circle (2pt) node[below=1pt] {$s$};
\filldraw (0.5,0) circle (2pt) node[below=1pt] {$t$};
\filldraw (-1.5,0) circle (2pt) node[below=1pt] {$u$};
\filldraw (1.5,0) circle (2pt) node[below=2pt] {$v$};
\filldraw (0,1) circle (2pt) node[left=2pt] {$w$};
\draw[red] (0,0) node {x};
\draw[red] (1,0) node {x};
\draw[red] (-1,0) node {x};
\draw[red] (-0.25,0.5) node[rotate=63] {x};
\draw[red] (0.25,0.5) node[rotate=117] {x};
\draw[red] (-1.325,0.175) node[rotate=45] {x};
\draw[red,thick] (-1.6,0.05) -- (-2,0.05);
\draw[red,thick] (-1.6,0.05) -- (-1.85,0.3);
\draw[red,thick] (0,1) circle (3pt);
\draw[red,thick] (1.5,0) circle (3pt);
\draw (-2,0) -- (2,0);
\draw (-0.5,0) -- (0,1);
\draw (0.5,0) -- (0,1);
\draw (-1.5,0) -- (-1.85,0.35);
\draw (-1.5,0) -- (-1.15,0.35);
\draw (1.5,0) -- (1.85,0.35);
\draw (1.5,0) -- (1.15,0.35);
\draw (0,1) -- (0,1.5);
\draw (0,1) -- (0.35,1.35);
\draw (0,1) -- (-0.35,1.35);
\end{tikzpicture}
\caption{\ref{stnotadj}: Reduction when $u$ and $w$ are not adjacent.}
\label{dtstwo}
\end{center}
\end{figure}

\emph{Suppose that $G'$ contains a $2$-robust at most $2$-edge-cut $\delta_{G'}(A)$, where $v$ and the lifted edge are in~$A$.} Then $\delta_G(A\cup\{s,t,u\})$ is an internal $2$-robust at most $5$-edge-cut, a contradiction. \emph{Suppose that $G'$ contains a $2$-robust at most $2$-edge-cut $\delta_{G'}(A)$, where $v\in A$ and the lifted edge is in~$G'-A$.} Then $w\in A$, else $\delta_G(A)$ is a $2$-robust $3$-edge-cut. Similarly, the endpoint of $e_3$ is in $A$, else $\delta_G(A\cup\{s,t\})$ is a $2$-robust $3$-edge-cut. Now $\delta_G(A\cup\{u,s,t\})$ is a $4$-edge-cut. By \ref{cutsdts}, $d\in G'-A$. Contract $A$ in $G$ to form a graph~$\bar{G}$. Then $\bar{G}$ is a DTS graph and has a valid orientation by the minimality of~$G$. Transfer this orientation to $G$, contract $G-A$, and delete $tv$ and $tw$ to form a graph $\bar{G}'$ with an oriented degree $4$ vertex. If $\bar{G}'$ contains an at most $3$-edge-cut, then $G$ contains an at most $5$-edge-cut that does not separate $d$, $s$, and $t$, a contradiction. Hence $\bar{G}'$ is a DTS graph and has a valid orientation by the minimality of~$G$. This leads to a valid orientation of $G$, a contradiction. Therefore $G'$ is $3$-edge-connected. \\

\emph{Suppose that $G'$ contains a $2$-robust $3$-edge-cut $\delta_{G'}(A)$, where $v$ and the lifted edge are in~$A$.} Then $\delta_G(A\cup\{s,t,u\})$ is an internal $2$-robust at most $6$-edge-cut, a contradiction. \emph{Suppose that $G'$ contains a $2$-robust $3$-edge-cut $\delta_{G'}(A)$, where $v\in A$, and the lifted edge is in~$G'-A$.} If all such cuts have the property that $d\in G'-A$, then $G'$ is a 3DTS graph and has a valid orientation by Claim~\ref{3cutsdts}. This leads to a valid orientation of $G$, a contradiction. Hence we may assume that $d\in A$. The endpoint of $e_3$ is in $G'-A$, else $\delta_G(A\cup\{u,s,t\})$ is a $2$-robust $5$-edge-cut that does not separate $d$ from $s$ or $t$, a contradiction. Also, $w\in G'-A$, else $\delta_G(A\cup\{s,t\})$ is a $2$-robust $4$-edge-cut that does not separate $d$ from $s$ or $t$, a contradiction. Then all such $3$-edge-cuts in $G'$ separate $w$ from $v$ and $d$, and so $G'$ has a valid orientation by Claim~\ref{3cutsdts}. Therefore $G'$ has no such $2$-robust at most $3$-edge-cuts. Figure \ref{dtsthree} shows an analysis of these cuts. \\

\begin{figure}
\begin{center}
\subfloat{
\begin{tikzpicture}[scale=0.8]
\draw[dashed] (1.5,0) ellipse (0.8cm and 1.6cm) node {$A$};
\draw[dashed] (-1.5,0) ellipse (0.8cm and 1.6cm);
\draw[red] (-1.5,0) ellipse (1cm and 1.8cm);
\filldraw (0.5,2) circle (2pt) node[above] {$t$};
\filldraw (0,2) circle (2pt) node[above] {$s$};
\filldraw (-0.5,2) circle (2pt) node[above] {$u$};
\filldraw (-1,1) circle (2pt) node[left] {$w$};
\draw (-1,0) -- (1,0);
\draw (-1,-0.5) -- (1,-0.5);
\draw (0.5,2) -- (1.2,1.4);
\filldraw (1.2,1.4) circle (2pt) node[above] {$v$};
\draw (-0.5,2) -- (0.5,2);
\draw (-0.5,2) -- (-1.2,1.4);
\draw (0,2) -- (-1,1);
\draw (0.5,2) -- (-1,1);
\draw (-0.5,2) to[out=180,in=70] (-2.7,1);
\draw (-2.7,1) to[out=-110,in=110] (-2.7,-1);
\draw (-0.5,-2) to[out=-180,in=-70] (-2.7,-1);
\draw (-0.5,-2) to[out=0,in=-140] (1.2,-1.4);

\draw (-0.5,2) to[out=170,in=70] (-2.8,1.1);
\draw (-2.8,1.1) to[out=-110,in=110] (-2.8,-1.1);
\draw (-0.5,-2.1) to[out=-180,in=-70] (-2.8,-1.1);
\draw (-0.5,-2.1) to[out=0,in=-140] (1.3,-1.5);
\end{tikzpicture}
}
\subfloat{
\begin{tikzpicture}[scale=0.8]
\draw[dashed] (1.5,0) ellipse (0.8cm and 1.6cm) node {$A$};
\draw[dashed] (-1.5,0) ellipse (0.8cm and 1.6cm);
\filldraw (0,-2) circle (0pt);
\filldraw (0.5,2) circle (2pt) node[above] {$t$};
\filldraw (0,2) circle (2pt) node[above] {$s$};
\filldraw (-0.5,2) circle (2pt) node[above] {$u$};
\filldraw (1,1) circle (2pt) node[right] {$w$};
\draw (-1,0) -- (1,0);
\draw (-1,-0.5) -- (1,-0.5);
\draw (0.5,2) -- (1.2,1.4);
\filldraw (1.2,1.4) circle (2pt) node[above] {$v$};
\draw (-0.5,2) -- (0.5,2);
\draw (-0.5,2) -- (-1.2,1.4);
\draw (-0.5,2) -- (-1.1,1.2);
\draw (0,2) -- (1,1);
\draw (0.5,2) -- (1,1);
\draw (-0.5,2) -- (0.9, 0.8);
\filldraw (-2.2,0) circle (2pt) node[left] {$d$};
\end{tikzpicture}
}

\subfloat{
\begin{tikzpicture}[scale=0.8]
\draw[dashed] (1.5,0) ellipse (0.8cm and 1.6cm) node {$A$};
\draw[dashed] (-1.5,0) ellipse (0.8cm and 1.6cm);
\draw[red] (-1.5,0) ellipse (1cm and 1.8cm);
\filldraw (0.5,2) circle (2pt) node[above] {$t$};
\filldraw (0,2) circle (2pt) node[above] {$s$};
\filldraw (-0.5,2) circle (2pt) node[above] {$u$};
\filldraw (-1,1) circle (2pt) node[left] {$w$};
\draw (-1,0) -- (1,0);
\draw (-1,-0.5) -- (1,-0.5);
\draw (-1,-1) -- (1,-1);
\draw (0.5,2) -- (1.2,1.4);
\draw (0,2) -- (-1,1);
\draw (0.5,2) -- (-1,1);
\filldraw (1.2,1.4) circle (2pt) node[above] {$v$};
\draw (-0.5,2) -- (0.5,2);
\draw (-0.5,2) -- (-1.2,1.4);
\draw (-0.5,2) to[out=180,in=70] (-2.7,1);
\draw (-2.7,1) to[out=-110,in=110] (-2.7,-1);
\draw (-0.5,-2) to[out=-180,in=-70] (-2.7,-1);
\draw (-0.5,-2) to[out=0,in=-140] (1.2,-1.4);

\draw (-0.5,2) to[out=170,in=70] (-2.8,1.1);
\draw (-2.8,1.1) to[out=-110,in=110] (-2.8,-1.1);
\draw (-0.5,-2.1) to[out=-180,in=-70] (-2.8,-1.1);
\draw (-0.5,-2.1) to[out=0,in=-140] (1.3,-1.5);

\end{tikzpicture}
}
\subfloat{
\begin{tikzpicture}[scale=0.8]
\draw[dashed] (1.5,0) ellipse (0.8cm and 1.6cm) node {$A$};
\draw[dashed] (-1.5,0) ellipse (0.8cm and 1.6cm);
\filldraw (0.5,2) circle (2pt) node[above] {$t$};
\filldraw (0,2) circle (2pt) node[above] {$s$};
\filldraw (-0.5,2) circle (2pt) node[above] {$u$};
\filldraw (-0.9,0.8) circle (2pt) node[left] {$w$};
\draw (-0.5,2) -- (0.5,2);
\draw (-0.5,2) -- (-1.2,1.4);
\draw (0.5,2) -- (1.2,1.4);
\filldraw (0,-2) circle (0pt);
\draw (-1,0) -- (1,0);
\draw (-1,-0.5) -- (1,-0.5);
\draw (-1,-1) -- (1,-1);
\draw (-0.5,2) -- (-1.1,1.2);
\draw (-0.5,2) -- (-1,1);
\draw (0,2) -- (-0.9,0.8);
\draw (0.5,2) -- (-0.9,0.8);
\filldraw (1.2,1.4) circle (2pt) node[above] {$v$};
\filldraw (2.2,0) circle (2pt) node[right] {$d$};
\end{tikzpicture}
}
\caption{\ref{stnotadj}: Analysis of cuts (2).}
\label{dtsthree}
\end{center}
\end{figure}
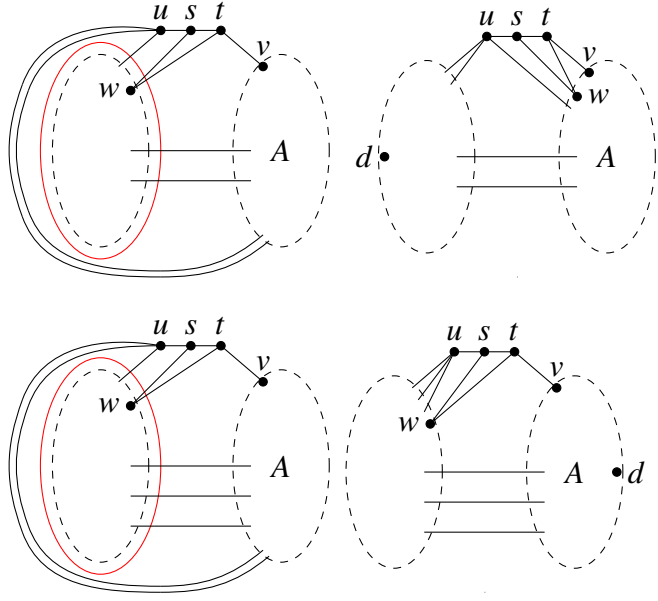

Thus far we have omitted consideration of the case where the lifted edge is an edge in the cut~$\delta(A)$. \emph{Suppose that $G'$ contains a $2$-robust at most $3$-edge-cut $\delta_{G'}(A)$ where $v\in A$ and the lifted edge is an edge of~$\delta(A)$.} If the endpoint of $e_1$ is in $G'-A$, then $\delta_G(G'-A)$ is a $2$-robust at most $3$-edge-cut, a contradiction. If the endpoint of $e_1$ is in $A$, then the argument is analogous to (but simpler than) that for the case where $v$ and the lifted edge are on the same side of the cut. In future reductions we omit discussion of the cases where the lifted edge is an edge of the cut. \\

Hence $G'$ is a DTS graph and has a valid orientation by the minimality of~$G$. This leads to a valid orientation of $G$, a contradiction. 
\end{proof}

By symmetry, $v$ and $w$ are adjacent. This provides sufficient structure to complete the proof. Suppose that $e_2$ and the analogous edge $f_2$ incident with $v$ are both chords. By \ref{chordsdts}, $e_2$ and $f_2$ separate $d$ from $s$ and~$t$. Thus $e_2=f_2$. Then $G$ has a $1$-edge-cut: the remaining edge incident with $w$, a contradiction. Hence $e_2$ and $f_2$ are not both chords. \\

By \ref{cutsdts}, $u$ and $v$ are not both adjacent to~$d$. Without loss of generality, assume that $u$ is not adjacent to~$d$. Suppose that $e_2$ is a chord. Then by \ref{chordsdts}, $e_2$ separates $d$ from $s$ and $t$. Since $e_2$ is not incident with $v$, it also separates $d$ from $v$, so $v$ is not adjacent to~$d$. Therefore at least one of $u$ and $v$ is not adjacent to $d$ and is not incident with a chord. Without loss of generality, we assume this vertex is $u$. Let $z$ be the other endpoint of~$e_1$. \\

Orient and delete $e_1$ and $e_2$ to satisfy $p(u)$ (which lifts $ut$ and $uw$ at $u$), and contract $\{u,s,t,v,w\}$ to a single vertex $c$ of degree $3$, calling the resulting graph~$G'$. Then $G'$ has at most two unoriented degree $3$ vertices ($c$ and~$z$). This reduction is shown in Figure \ref{dtsfour}.\\

\begin{figure}
\begin{center}
\begin{tikzpicture}[scale = 2]
\filldraw (-0.5,0) circle (2pt) node[below=1pt] {$s$};
\filldraw (0.5,0) circle (2pt) node[below=1pt] {$t$};
\filldraw (-1.5,0) circle (2pt) node[below=1pt] {$u$};
\filldraw (1.5,0) circle (2pt) node[below=1pt] {$v$};
\filldraw (0,1) circle (2pt) node[left=1pt] {$w$};
\filldraw (-2.5,0) circle (2pt) node[below=2pt] {$z$};
\draw[red] (-2,0) node {x};
\draw[red] (-1.675,0.175) node[rotate=-45] {x};
\draw[red,thick] (-1.35,0.05) -- (-1,0.05);
\draw[red,thick] (-1.35,0.05) -- (-1.05,0.25);
\draw[red,thick] (-2.5,0) circle (3pt);
\draw (-3,0) -- (2,0);
\draw (-0.5,0) -- (0,1);
\draw (0.5,0) -- (0,1);
\draw (-1.5,0) -- (-1.85,0.35);
\draw (-1.5,0) -- (0,1);
\draw (1.5,0) -- (1.85,0.35);
\draw (1.5,0) -- (0,1);
\draw (0,1) -- (0,1.5);
\draw (-2.5,0) -- (-2.85,0.35);
\draw (-2.5,0) -- (-2.15,0.35);

\draw[red] (-1.65,-0.2) to[out=90,in=180] (0,1.2);
\draw[red] (1.65,-0.2) to[out=90,in=0] (0,1.2);
\end{tikzpicture}
\caption{\ref{stnotadj}: Reduction when $u$ and $w$ are adjacent.}
\label{dtsfour}
\end{center}
\end{figure}
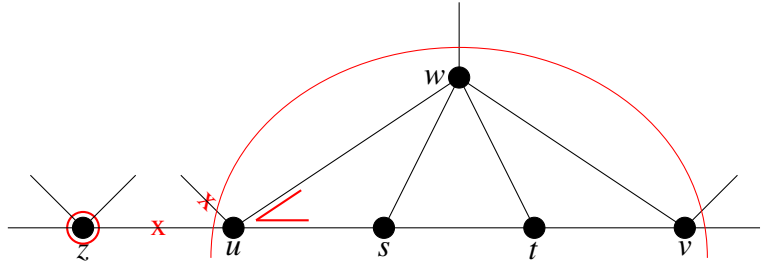

\emph{Suppose that $G'$ contains a $2$-robust at most $3$-edge-cut $\delta_{G'}(A)$ where $z$ and $c$ are in~$A$.} Then $G$ contains a $2$-robust internal at most $4$-edge-cut, a contradiction. \\

\emph{Suppose that $G'$ contains a $2$-robust at most $2$-edge-cut $\delta_{G'}(A)$ where $z\in G'-A$ and $c\in A$.} Since $\delta_G(A)$ and $\delta_G(A\cup\{u\})$ are $4$-edge-cuts, $d\not\in A$. If $v$ is adjacent to $d$ or incident with a chord, then $G$ has an internal at most $3$-edge-cut $\delta_G(A-c)$, a contradiction. Hence $v$ is not adjacent to $d$ or incident with a chord. We consider applying the same reduction at~$v$. If an analogous $4$-edge-cut $\delta_G(B)$ exists using edges incident with $v$, then these cuts cross. \\

We have $\{s,t,u,v,w\}\subseteq A\cap B$ in $G$ (where we replace $c$ with $\{s,t,u,v,w\}$ in $G$), and $d\in G-(A\cup B)$. By construction, $A-B$ and $B-A$ each contain at least two vertices: the neighbours of $u$ and~$v$. Thus \ref{cutsdts} implies that $|\delta_G(A-B)|,|\delta_G(B-A)|\geq 6$. This is not possible given that $|\delta_G(A)|=|\delta_G(B)|=4$. Thus we may apply the same reduction at $v$ without producing a $2$-edge-cut. Hence we may assume that no such cut exists, and $G'$ is $3$-edge-connected.\\

\emph{Suppose that $G'$ contains a $2$-robust $3$-edge-cut $\delta_{G'}(A)$ where $z\in G'-A$ and $c\in A$.} Then $G'$ is a 3DTS graph and has a valid orientation by Claim~\ref{3cutsdts}. This leads to a valid orientation of $G$, a contradiction. Hence no such cut exists. \\

We conclude that $G'$ is a DTS graph, and has a valid orientation by the minimality of~$G$. This leads to a valid orientation of $G$, a contradiction. Hence $s$ and $t$ are not adjacent. 
\end{proof}

Let $u$ and $v$ be the boundary vertices adjacent to $t$, and let $w$ be the remaining vertex adjacent to~$t$. Let $x$ and $y$ be the boundary vertices adjacent to $s$, and let $z$ be the remaining vertex adjacent to~$s$. Note that $w$ and $z$ are internal and have degree at least~$5$, and that none of $u$, $v$, $w$, $x$, $y$, and $z$ are in the set $\{d,s,t\}$. 

\begin{Prop}{DTS6}
Vertices $u$, $v$, $x$, and $y$ have degree $4$. 
\end{Prop}
\begin{proof}
Suppose without loss of generality that $v$ has degree at least~$5$. Let $G'$ be the graph obtained from $G$ by orienting and deleting~$t$. Then $s$ and $u$ are the only possible unoriented degree $3$ vertices in~$G'$. \emph{Suppose that $G'$ contains a $2$-robust at most $2$-edge-cut~$\delta_{G'}(A)$.} Then $\delta_G(A)$ or $\delta_G(A\cup\{t\})$ is a $2$-robust at most $3$-edge-cut, a contradiction. \\

\emph{Suppose that $G'$ contains a $2$-robust $3$-edge-cut~$\delta_{G'}(A)$. }If $u$ and $v$ are both in $A$, then $G$ contains an internal $4$-edge-cut, a contradiction. Let $u\in A$ and $v\in G'-A$. If all such $3$-edge-cuts separate $d$ from $u$, then by Claim \ref{3cutsdts} $G'$ has a valid orientation. This leads to a valid orientation of $G$, a contradiction. Hence we may assume that $d\in A$. If all such cuts separate $s$ from $u$, then similarly, $G'$ has a valid orientation. This leads to a valid orientation of $G$, a contradiction. Hence we may assume that $s\in A$. Then $\delta_G(A)$ or $\delta_G(A\cup\{t\})$ is a $4$-edge-cut that does not separate $s$ from $d$, a contradiction. Hence no such cut exists. Thus $G'$ is a DTS graph and has a valid orientation by the minimality of~$G$. This leads to a valid orientation of $G$, a contradiction. Hence $v$ has degree~$4$. The same argument applies for $u$, $x$, and~$y$. 
\end{proof}

\begin{Prop}{DTS7}
The edges $uw$, $vw$, $xz$, and $yz$ exist, and $w$ and $z$ have degree $5$.
\label{copys}
\end{Prop}
\begin{proof}
Suppose without loss of generality that either $u$ and $w$ are not adjacent or that $deg(w)\geq 6$. Let $e_1$, $e_2$, $e_3$, and $e_t$ be the edges incident with $u$ in order, where $e_1$ is on the boundary of~$F_G$ and $e_t=ut$. By \ref{chordsdts}, $e_3$ is not a chord. Let $G'$ be the graph obtained from $G$ by lifting the pair of edges $e_1$, $e_2$, and orienting and deleting $u$ and~$t$. Then $v$ and $s$ are the only possible unoriented degree $3$ vertices in~$G'$. \\

\emph{Suppose that $G'$ contains a $2$-robust at most $3$-edge-cut $\delta_{G'}(A)$ where $v$ and the lifted edge are in~$A$.} Then $\delta_G(A)$ is a $2$-robust internal at most $5$-edge-cut, a contradiction. \\

\emph{Suppose that $G'$ contains a $2$-robust at most $2$-edge-cut $\delta_{G'}(A)$ where $v\in A$ and the lifted edge is in~$G'-A$.} Since $G$ does not contain a $2$-robust at most $3$-edge-cut, $w$ and the endpoint of $e_3$ are in~$A$. Since $\delta_G(A\cup\{u,t\})$ is a $2$-robust $4$-edge-cut, $d\in G'-A$, and $s\in A$. Figure \ref{dtsfive} shows this graph. In $G$, contract $A$ to form a graph~$\hat{G}$. Then $\hat{G}$ is a DTS graph and has a valid orientation by the minimality of~$G$. Transfer this orientation to~$G$. Contract $G-A$ and delete $tv$ and $tw$ to form a graph~$\hat{G}'$. Then $\hat{G}'$ has an oriented degree $3$ vertex $d'$ and two vertices of degree $3$ ($s$ and~$v$). If $\hat{G}'$ contains a $2$-robust at most $2$-edge-cut, then $G$ contains a $2$-robust at most $3$-edge-cut, a contradiction. If $\hat{G}'$ contains a $2$-robust $3$-edge-cut, it necessarily separates $d'$ from $s$ or $v$ (else $G$ has a $2$-robust at most $5$-edge-cut with $d$, $s$, and $t$ on the same side), so $\hat{G}'$ has a valid orientation by Claim~\ref{3cutsdts}. Thus $\hat{G'}$ is a DTS graph and has a valid orientation by the minimality of~$G$. This leads to a valid orientation of $G$, a contradiction. Hence $G'$ is $3$-edge-connected.\\

\begin{figure}
\begin{center}
\begin{tikzpicture}
\draw[dashed] (1.5,0) ellipse (0.8cm and 1.6cm);
\draw[dashed] (-1.5,0) ellipse (0.8cm and 1.6cm) node {$A$};
\filldraw (0.5,2) circle (2pt) node[above] {$u$};
\filldraw (-0.5,2) circle (2pt) node[above] {$t$};
\draw (-1,0) -- (1,0);
\draw (-1,-0.5) -- (1,-0.5);
\draw (0.5,2) -- (1.2,1.4);
\draw (0.5,2) -- (1.1,1.2);
\filldraw (-1.2,1.4) circle (2pt) node[above] {$v$};
\draw (-0.5,2) -- (0.5,2);
\draw (-0.5,2) -- (-1.2,1.4);
\draw (-0.5,2) -- (-1.1,1.2);
\draw (0.5,2) -- (-1,1);
\filldraw (2.2,0) circle (2pt) node[right] {$d$};
\filldraw (-2.2,0) circle (2pt) node[left] {$s$};
\draw[red] (-0.9,2) to[out=-50,in=90] (0,-1.5);
\end{tikzpicture}
\caption{\ref{copys}: Reduce the $5$-edge-cut $\delta(A)$.}
\label{dtsfive}
\end{center}
\end{figure}
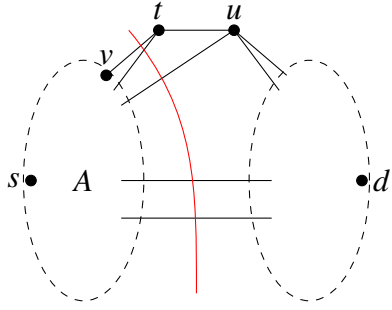

\emph{Suppose that $G'$ contains a $2$-robust at most $3$-edge-cut $\delta_{G'}(A)$ where $v\in A$ and the lifted edge is in~$G'-A$.} If all such $3$-edge-cuts separate $d$ from $v$, then by Claim \ref{3cutsdts} $G'$ has a valid orientation. This leads to a valid orientation of $G$, a contradiction. Hence we may assume that $d\in A$. If $s\in A$, then either $\delta_G(A\cup\{t\})$ or $\delta_G(A\cup\{u,t\})$ is a $2$-robust at most $5$-edge-cut that does not separate $d$ from $s$ or $t$, contradicting \ref{cutsdts}. Hence $s\in G'-A$. Therefore $\delta_{G'}(A)$ separates $s$ from $d$ and~$v$. By Claim \ref{3cutsdts}, $G'$ has a valid orientation. This leads to a valid orientation of $G$, a contradiction. Thus $G'$ has no $2$-robust at most $3$-edge-cut. \\

Therefore $G'$ is a DTS graph and has a valid orientation by the minimality of~$G$. This leads to a valid orientation of $G$, a contradiction. We conclude that $u$ and $w$ are adjacent and $deg(w)=5$. The other cases are equivalent. 
\end{proof}

\begin{Prop}{DTS8}
The vertices $d$, $s$, $t$, $u$, $v$, $x$, and $y$ form the boundary of $F_G$, where either $v=x$ or $u=z$ (up to renaming). 
\label{boundary}
\end{Prop}
\begin{proof}
Suppose without loss of generality that $u$ is not adjacent to $s$ or to~$d$. Suppose that $u$ is incident with a chord. If $u$ and $v$ are both incident with chords, then by \ref{chordsdts} these chords cross, a contradiction. Thus we may assume that $v$ is not incident with a chord. If $v$ is adjacent to $d$, then $u$ is not incident with a chord, by \ref{chordsdts}, a contradiction. If $v$ is adjacent to $s$, then $v=x$. If $u$ and $y$ are both incident with chords, then by \ref{chordsdts} these chords cross, a contradiction. Thus we may assume that $y$ is not incident with a chord. If $y$ is adjacent to $d$, then $u$ is not incident with a chord, by \ref{chordsdts}. Then $y$ is neither incident with a chord nor adjacent to~$d$. Hence there exists a vertex in the set $\{u,v,x,y\}$ that is adjacent to exactly one vertex in $\{s,t,d\}$ and is not incident with a chord. Without loss of generality, assume this vertex is~$u$. Let $e_1$, $e_2$, $e_3$, and $e_t$ be the edges incident with $u$ in order, where $e_1$ is on the boundary of~$F_G$ and $e_t=ut$. Let $q$ be the other endpoint of~$e_1$. \\

Let $G'$ be the graph obtained from $G$ by orienting and deleting $e_1$ and $e_2$ to satisfy $p(u)$, and contracting $\{u,t,v,w\}$ to a single vertex $c$ of degree~$4$. Then $G'$ has at most two vertices of degree $3$ ($s$ and~$q$). \emph{Suppose that $G'$ contains a $2$-robust at most $3$-edge-cut $\delta_{G'}(A)$ where $c$ and $q$ are in~$A$.} Then $\delta_G(G'-A)$ is a $2$-robust internal at most $4$-edge-cut in $G$, a contradiction. \\

\emph{Suppose that $G'$ contains a $2$-robust at most $2$-edge-cut where $c\in A$ and $q\in G'-A$. }Then $s\in A$ and $d\in G'-A$, else $G$ has a $2$-robust at most $4$-edge-cut that does not separate $d$ from $s$ and $t$, a contradiction to \ref{cutsdts}. Figure \ref{dtssix} shows this graph. In $G$, contract $(A-\{c\})\cup\{v,w\}$ to a vertex, calling the resulting graph~$\bar{G}$. Then $\bar{G}$ is a DTS graph and has a valid orientation by the minimality of~$G$. Transfer this orientation to $G$, contract $(G'-A)\cup\{u,t\}$ to a vertex, and delete $tv$ and $tw$, calling the resulting graph~$\bar{G}'$. Then $\bar{G}'$ has a directed degree $3$ vertex $d'$ and two vertices of degree $3$ ($s$ and~$v$). If $\bar{G}'$ contains a $2$-robust at most $2$-edge-cut, then $G$ contains a $2$-robust at most $4$-edge-cut that does not separate $d$ from $s$ and $t$, a contradiction. If $\bar{G}'$ contains a $2$-robust $3$-edge-cut, it necessarily separates $d'$ from $s$ or $v$, so $\bar{G}'$ has a valid orientation by Claim~\ref{3cutsdts}. Thus $\bar{G'}$ is a DTS graph and has a valid orientation by the minimality of~$G$. This leads to a valid orientation of $G$, a contradiction. Hence $G'$ is $3$-edge-connected.\\

\begin{figure}
\begin{center}
\begin{tikzpicture}
\draw[dashed] (1.5,0) ellipse (0.8cm and 1.6cm) node {$A$};
\draw[dashed] (-1.5,0) ellipse (0.8cm and 1.6cm);
\draw (-1,0) -- (1,0);
\draw (-1,-0.5) -- (1,-0.5);
\filldraw (-1.2,1.4) circle (2pt) node[above] {$q$};
\filldraw (1.2,1.4) circle (2pt) node[above] {$u$};
\filldraw (1.8,0.7) circle (2pt) node[above] {$v$};
\filldraw (1.5,1.05) circle (2pt) node[above] {$t$};
\filldraw (1.15,0.75) circle (2pt) node[below] {$w$};
\draw (1.2,1.4) -- (-1.2,1.4);
\draw (1.2,1.4) -- (-1.1,1.2);
\draw (1.2,1.4) -- (1.8,0.7);
\draw (1.5,1.05) -- (1.15,0.75);
\draw (1.2,1.4) -- (1.15,0.75);
\draw (1.8,0.7) -- (1.15,0.75);
\draw (1.8,0.7) -- (1.98,0.49);
\draw (1.8,0.7) -- (1.8,0.45);
\draw (1.15,0.75) -- (0.9,0.75);
\draw (1.15,0.75) -- (0.92,0.65);
\filldraw (2.2,0) circle (2pt) node[right] {$s$};
\filldraw (-2.2,0) circle (2pt) node[left] {$d$};
\draw[red] (1,0.9) to[out=-180,in=90] (0,-1.5);
\draw[red] (1,0.9) to[out=0,in=170] (2.5,0.8);
\end{tikzpicture}
\caption{\ref{boundary}: Reduce the $5$-edge-cut $\delta(A-\{u,t\}))$.}
\label{dtssix}
\end{center}
\end{figure}
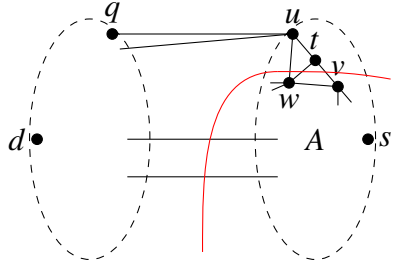

\emph{Suppose that $G'$ contains a $2$-robust $3$-edge-cut $\delta_{G'}(A)$, where $c\in A$ and $q\in G'-A$.} If all such cuts separate $d$ from $q$, then $G'$ has a valid orientation by Claim~\ref{3cutsdts}. This leads to a valid orientation of $G$, a contradiction. Thus we may assume that $d\in G'-A$. If all such cuts separate $s$ from $q$, then $G'$ has a valid orientation by Claim~\ref{3cutsdts}. This leads to a valid orientation of $G$, a contradiction. Thus we may assume that $s\in G'-A$. In $G$, $\delta_G(G'-A)$ is not a $4$-edge-cut, else it separates $d$ and $s$ from $t$, a contradiction. Hence it is a $5$-edge-cut. Figure \ref{dtsseven} shows this graph.  \\

\begin{figure}
\begin{center}
\begin{tikzpicture}
\draw[dashed] (1.5,0) ellipse (0.8cm and 1.6cm) node {$A$};
\draw[dashed] (-1.5,0) ellipse (0.8cm and 1.6cm);
\draw (-1,0) -- (1,0);
\draw (-1,-0.5) -- (1,-0.5);
\draw (-1,-1) -- (1,-1);
\filldraw (-1.2,1.4) circle (2pt) node[above] {$q$};
\filldraw (1.2,1.4) circle (2pt) node[above] {$u$};
\filldraw (1.8,0.7) circle (2pt) node[above] {$v$};
\filldraw (1.5,1.05) circle (2pt) node[above] {$t$};
\filldraw (1.15,0.75) circle (2pt) node[below] {$w$};
\draw (1.2,1.4) -- (-1.2,1.4);
\draw (1.2,1.4) -- (-1.1,1.2);
\draw (1.2,1.4) -- (1.8,0.7);
\draw (1.5,1.05) -- (1.15,0.75);
\draw (1.2,1.4) -- (1.15,0.75);
\draw (1.8,0.7) -- (1.15,0.75);
\draw (1.8,0.7) -- (1.98,0.49);
\draw (1.8,0.7) -- (1.8,0.45);
\draw (1.15,0.75) -- (0.9,0.75);
\draw (1.15,0.75) -- (0.92,0.65);
\filldraw (-2.2,0.5) circle (2pt) node[left] {$s$};
\filldraw (-2.2,-0.5) circle (2pt) node[left] {$d$};
\draw[red] (1,0.9) to[out=-180,in=90] (0,-1.5);
\draw[red] (1,0.9) to[out=0,in=170] (2.5,0.8);
\end{tikzpicture}
\caption{\ref{boundary}: Reduce the $6$-edge-cut $\delta(A-\{u,t\})$.}
\label{dtsseven}
\end{center}
\end{figure}
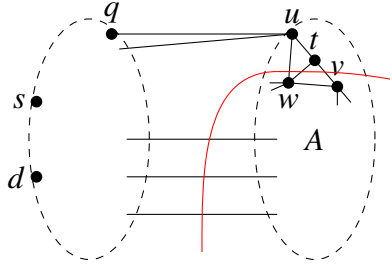

In $G$, contract $(A-\{r\})\cup\{v,w\}$ to a vertex, calling the resulting graph~$\bar{G}$. Then $\bar{G}$ is a DTS graph and has a valid orientation by the minimality of~$G$. Transfer this orientation to $G$, contract $(G'-A)\cup\{u,t\}$ to a vertex, and delete $tv$ and $tw$, calling the resulting graph~$\bar{G}'$. Then $\bar{G}'$ has a directed degree $4$ vertex $d'$ and one vertex of degree $3$~($v$). If $\bar{G}'$ contains a $2$-robust at most $2$-edge-cut, then $G$ contains a $2$-robust at most $4$-edge-cut that does not separate $d$ from $s$ and $t$, a contradiction. If $\bar{G}'$ contains a $2$-robust $3$-edge-cut, it necessarily separates $d'$ from $v$ (else $G$ contains an internal $2$-robust at most $4$-edge-cut), so $\bar{G}'$ has a valid orientation by Claim~\ref{3cutsdts}. Thus $\bar{G'}$ is a DTS graph and has a valid orientation by the minimality of~$G$. This leads to a valid orientation of $G$, a contradiction. Hence $G'$ has no $2$-robust at most $3$-edge-cuts. \\

Therefore $G'$ is a DTS graph and has a valid orientation by the minimality of~$G$. This leads to a valid orientation of $G$, a contradiction. The result follows.
\end{proof}

This concludes the proof of the properties of DTS graphs. Without loss of generality, the boundary of $F_G$ consists of the vertices $u,t,v=x,s,y,d$ in order. This graph is shown in Figure \ref{dtseight}. Let $A=\{u,t,v,s,y,d,w,z\}$. Then $\delta(A)$ is an internal $7$-edge-cut. If $G-A$ contains only one vertex, then the graph contains unoriented parallel edges, a contradiction. Hence $\delta(A)$ is $2$-robust. Contract $G-A$ to a vertex, calling the resulting graph~$G'$. Then $G'$ is a DTS graph and has a valid orientation by the minimality of~$G$. Transfer this orientation to~$G$. Contract $A$ to a single vertex $d'$ and delete the two edges incident with $w$, calling the resulting graph~$G''$. Then $d'$ is a directed vertex of degree $5$, and $G''$ contains no degree $3$ vertices, since in $G$ the neighbours of $w$ are distinct internal vertices of degree at least~$5$. If $G''$ contains a $2$-robust at most $3$-edge-cut, then $G$ contains a $2$-robust internal at most $5$-edge-cut, a contradiction. Hence $G''$ is a DTS graph and has a valid orientation by the minimality of~$G$. This leads to a valid orientation of $G$, a contradiction. Therefore no minimum counterexample exists, and Theorem \ref{maindts} follows. \end{proof}

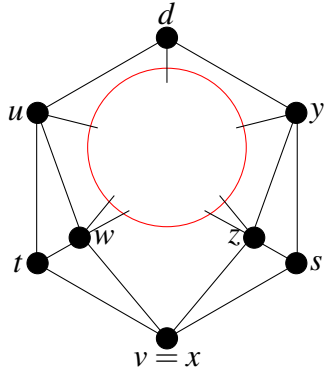
\begin{figure}
\begin{center}
\begin{tikzpicture}[scale = 2]
\begin{scope}[rotate=30]
\pgfkeys{/pgf/regular polygon sides=6,/pgf/minimum size=2cm}
\pgfnode{regular polygon}{center}{}{}{}
\pgfusepath{draw}
\end{scope}
\filldraw (0,-1) circle (2pt) node[below=1pt] {$v=x$};
\filldraw (0,1) circle (2pt) node[above=1pt] {$d$};
\filldraw (0.86,0.5) circle (2pt) node[right=1pt] {$y$};
\filldraw (0.86,-0.5) circle (2pt) node[right=1pt] {$s$};
\filldraw (-0.86,0.5) circle (2pt) node[left=1pt] {$u$};
\filldraw (-0.86,-0.5) circle (2pt) node[left=1pt] {$t$};

\filldraw (-0.58,-0.33) circle (2pt) node[right=1pt] {$w$};
\draw (-0.56,-0.33) -- (-0.86,-0.5);
\draw (-0.56,-0.33) -- (-0.86,0.5);
\draw (-0.56,-0.33) -- (0,-1);

\filldraw (0.58,-0.33) circle (2pt) node[left=1pt] {$z$};
\draw (0.56,-0.33) -- (0.86,-0.5);
\draw (0.56,-0.33) -- (0.86,0.5);
\draw (0.56,-0.33) -- (0,-1);

\draw[red] (0,0.27) circle (15pt);
\draw (0,1) -- (0,0.7);
\draw (-0.86,0.5) -- (-0.46,0.4);
\draw (0.86,0.5) -- (0.46,0.4);
\draw (-0.58,-0.33) -- (-0.35,-0.05);
\draw (-0.58,-0.33) -- (-0.25,-0.15);
\draw (0.58,-0.33) -- (0.35,-0.05);
\draw (0.58,-0.33) -- (0.25,-0.15);
\end{tikzpicture}
\caption{The boundary consists of only the vertices $u,t,v=x,s,y,d$.}
\label{dtseight}
\end{center}
\end{figure}

We conclude this section with some simple consequences of Theorem \ref{maindts}. In proving the Strong $3$-Flow Conjecture for projective planar graphs, our reductions will result in DTS graphs, and also graphs with three vertices of degree $3$ and no directed vertex. Theorem \ref{maindts} implies that such graphs also have a valid orientation.

\begin{definition}
An \emph{RST graph} is a graph $G$ embedded in the plane, together with a valid $\mathbb{Z}_3$-prescription function $p: V(G)\rightarrow\{-1,0,1\}$, such that:
\begin{enumerate}
\item $G$ is $3$-edge-connected,
\item $G$ has a specified face $F_G$, and at most three specified vertices $r$, $s$, and~$t$,
\item if $r$, $s$, and $t$ exist, then they have degree $3$ and are in the boundary of~$F_G$,
\item $G$ has at most three $3$-edge-cuts, which can only be $\delta(\{r\})$, $\delta(\{s\})$, and $\delta(\{t\})$, and
\item every vertex not in the boundary of $F_G$ has $5$ edge-disjoint paths to the boundary of~$F_G$.
\end{enumerate}
A \emph{3RST graph} is a graph $G$ with the above definition, where (4) is replaced by 
\begin{enumerate}
\item[4'.] all vertices aside from $r$, $s$, and $t$ have degree at least $4$, and if $r$, $s$, and $t$ exist, then every $3$-edge-cut in $G$ separates one of $r$, $s$, and $t$ from the other two.
\end{enumerate}
\end{definition}

The following three results are immediate consequences.

\begin{corollary}
Every RST graph has a valid orientation.
\label{mainrst}
\end{corollary}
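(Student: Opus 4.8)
The plan is to reduce the statement to Theorem~\ref{maindts} by promoting one degree-$3$ vertex of the RST graph to the directed vertex $d$ of a DTS graph.

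First I would clear away two degenerate cases. If $G$ has at most two vertices it is a DTS graph by definition, so Theorem~\ref{maindts} applies directly. If $G$ has no vertex of degree~$3$, then condition~(4) of the RST definition forces $G$ to have no $3$-edge-cut at all; re-embedding so that $F_G$ is the unbounded face, Theorem~\ref{mainplanar} applies with neither specified vertex $d$ nor $t$ present and produces a valid $\mathbb{Z}_3$-flow, equivalently a valid orientation of $G$. So I may henceforth assume $G$ has a vertex of degree~$3$. Every $3$-edge-cut of $G$ is of the form $\delta(\{v\})$ for one of the specified vertices $r$, $s$, $t$; since a degree-$3$ vertex of a $3$-edge-connected graph has no incident loop (else $G$ would have a $1$-edge-cut), its star is a $3$-edge-cut, so this vertex is one of $r$, $s$, $t$, and I rename it $r$. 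A vertex of degree~$3$ with three non-loop edges can be oriented so that its net in-minus-out difference equals any prescribed element of $\mathbb{Z}_3$, so I orient the three edges at $r$ to make $r$ a directed vertex meeting $p(r)$. Call the resulting graph (on the same underlying graph as $G$) $G'$, and set $d:=r$.

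The core of the argument is then the routine check that $G'$ is a DTS graph. Orienting edges changes neither the underlying graph nor its edge-connectivity, its cuts, its embedding, or the edge-disjoint-path condition, so DTS conditions (1), (2), (6), (7) are inherited verbatim from RST conditions (1), (2), (4), (5); in particular the (at most three) $3$-edge-cuts of $G'$ are among $\delta(\{d\})$, $\delta(\{s\})$, $\delta(\{t\})$. Condition~(3) holds because $d$ has degree~$3$, is oriented by construction, and lies on the boundary of $F_G$; condition~(4) follows from RST condition~(3); and condition~(5) holds because after orienting $r$ the only possible unoriented degree-$3$ vertices are $s$ and $t$, so the parameter $a$ satisfies $a\le 2$ and $\deg(d)=3\le 5-a$. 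Hence $G'$ is a DTS graph, and by Theorem~\ref{maindts} it has a valid orientation; since a valid orientation of $G'$ extends the orientation of $d$, which was chosen to realize $p(r)$, it is also a valid orientation of $G$. I expect no genuine obstacle here: all the substance has already been absorbed into Theorem~\ref{maindts}, and the only points needing care are orienting $r$ compatibly with $p(r)$ and confirming that $G'$ still has at most three $3$-edge-cuts, all of the prescribed form.
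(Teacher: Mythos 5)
Your proposal is correct and is exactly the derivation the paper intends: the paper states Corollary~\ref{mainrst} as an immediate consequence of Theorem~\ref{maindts}, obtained by orienting one unoriented degree-$3$ vertex to realize its prescription and treating it as the directed vertex $d$ of a DTS graph. Your checks of the DTS conditions (in particular that $a\le 2$ so $\deg(d)=3\le 5-a$, and that any loopless degree-$3$ vertex can be oriented to meet any value in $\mathbb{Z}_3$) are the right ones and present no gap.
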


\begin{lemma}
 \label{3dts}All 3DTS graphs have a valid orientation.
\end{lemma}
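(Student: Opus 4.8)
Since Theorem \ref{maindts} is already established, the plan is to mimic the proof of Claim \ref{3cutsdts} essentially verbatim, with the one difference that the place where that argument invoked the minimality of the (now non-existent) counterexample to Theorem \ref{maindts} is replaced by a direct appeal to Theorem \ref{maindts} itself. Concretely, I would take a counterexample $G$ to the lemma minimizing $|E(G)|$ (exactly the ordering used in Claim \ref{3cutsdts}; one may also carry the secondary quantity $|E(G)|-\deg(d)$ when $d$ exists, to stay parallel with Theorem \ref{maindts}, but it is not needed). Graphs on at most two vertices and graphs with no unoriented edge have a valid orientation trivially, so $G$ has an unoriented edge.

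For the base case, suppose $G$ has no $2$-robust $3$-edge-cut (read, when $d$ is absent, as: no $3$-edge-cut with both sides of size at least $2$). Then every $3$-edge-cut of $G$ is of the form $\delta(\{v\})$ for a vertex $v$ of degree $3$, and by condition $(6')$ every vertex of degree $3$ lies in $\{d,s,t\}$; hence the only $3$-edge-cuts of $G$ are among $\delta(\{d\})$, $\delta(\{s\})$, $\delta(\{t\})$, so $G$ satisfies $(6)$ and is a DTS graph. By Theorem \ref{maindts} it has a valid orientation, a contradiction.

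For the inductive step, $G$ has a $2$-robust $3$-edge-cut $\delta(A)$; using $(6')$ and the symmetry of $s$ and $t$ I would choose $A$ with $d\in A$ (when $d$ exists) and $t\notin A$. Since $G$ is $3$-edge-connected, both $G[A]$ and $G[G-A]$ are connected with at least one edge, so contracting $G-A$ to a vertex $v$ yields a graph $G_1$ and contracting $A$ to a vertex $d'$ yields a graph $G_2$, each with strictly fewer edges than $G$. One checks that $G_1$ and $G_2$ are again 3DTS graphs: the new vertices have degree $3$ and lie on the specified face (an internal vertex would need five edge-disjoint paths to the boundary, impossible across the $3$-edge-cut $\delta(A)$); $3$-edge-connectedness descends because any small cut of $G_i$ is a small cut of $G$; the five-path property descends by Lemma \ref{edgedisjoint}; and $(6')$ persists because contracting one side of a $3$-edge-cut already ``aimed at'' one of $d,s,t$ creates no new low-degree vertex and no new offending $3$-edge-cut. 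By minimality $G_1$ has a valid orientation; transferring it to $G[A]$ orients in particular the three edges of $\delta(A)$, so in $G_2$ the vertex $d'$ becomes a directed degree-$3$ vertex whose net prescription $p'(d')=\sum_{w\in A}p(w)$ is consistent with that orientation. By minimality $G_2$ then has a valid orientation extending that of $d'$, and the two orientations agree on $\delta(A)$, so they glue to a valid orientation of $G$ — contradiction. Hence no counterexample exists.

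The ``contract one side, orient, contract the other side, glue'' scheme is routine, being used repeatedly in the proof of Theorem \ref{maindts}. The main obstacle is the bookkeeping in the inductive step confirming that $G_1$ and $G_2$ really are 3DTS graphs, in particular tracking which of $d,s,t$ survive in each piece: when fewer than three of them exist in $G$, the contracted vertex may have to be designated an unoriented degree-$3$ specified vertex, and one must confirm that the total number of specified vertices stays at most three and that, when all three end up present, every $3$-edge-cut of the resulting graph separates one of them from the other two. This, together with the minor wrinkle that ``$2$-robust'' is only literally defined when a directed vertex is present, is where the care is needed; everything else is a transcription of Claim \ref{3cutsdts}.
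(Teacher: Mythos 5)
Your proposal is correct and is essentially the paper's own route: the paper presents Lemma~\ref{3dts} as an immediate consequence of Theorem~\ref{maindts}, the underlying argument being exactly the induction of Claim~\ref{3cutsdts} (contract each side of a $2$-robust $3$-edge-cut, orient, and glue) with the appeal to the minimal counterexample replaced by an appeal to the now-proved theorem in the base case. The bookkeeping you flag (which of $d,s,t$ survive in each piece, and that condition $(6')$ persists) is real but routine, and the paper itself leaves it implicit.
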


\begin{corollary}
 \label{3rst}All 3RST graphs have a valid orientation.
\end{corollary}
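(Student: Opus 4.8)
The plan is to reduce any 3RST graph to a 3DTS graph and then invoke Lemma~\ref{3dts}. The only gap between the two definitions is that a 3RST graph may carry three \emph{unoriented} degree-$3$ specified vertices $r$, $s$, $t$, whereas a 3DTS graph carries at most two unoriented degree-$3$ specified vertices together with an optional oriented vertex $d$ of degree $\le 5-a$, where $a$ is the number of unoriented degree-$3$ vertices. So the move is: if all three of $r$, $s$, $t$ exist in the given 3RST graph $G$, orient the three edges incident with $r$ to satisfy $p(r)$ and set $d:=r$; otherwise leave $G$ alone.

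First I would observe that orienting $r$ is always possible: $r$ has degree $3$, and as its three incident edges are oriented the value $\operatorname{indeg}(r)-\operatorname{outdeg}(r)$ runs over $\{-3,-1,1,3\}$, which covers all of $\mathbb{Z}_3$ modulo $3$, so some orientation realizes $p(r)$. Then I would verify clause by clause that the resulting graph $G'$ is a 3DTS graph. Re-orienting edges changes neither the underlying graph, its embedding, the specified face, the prescription $p$, $3$-edge-connectivity, the collection of edge-cuts, nor the ``five edge-disjoint paths to the boundary'' property, so conditions (1), (2) (now $d$, $s$, $t$ are three specified vertices), (3) (with $d:=r$ of degree $3$, oriented, on the boundary of $F_{G'}=F_G$), (4) (for $s$ and $t$), (5) (with $a\le 2$, hence $\deg(d)=3\le 5-a$), (7), and the ``every vertex other than $d$, $t$, $s$ has degree at least $4$'' part of (6$'$) all carry over verbatim from $G$, while the ``every $3$-edge-cut separates one of $d$, $t$, $s$ from the other two'' part of (6$'$) holds because $\{d,t,s\}=\{r,s,t\}$ as vertex sets and $G'$ has exactly the edge-cuts of $G$. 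In the remaining case, where fewer than three of $r$, $s$, $t$ are present, the same bookkeeping shows $G$ itself is already a 3DTS graph: take $\{t,s\}$ to be the present subset of $\{r,s,t\}$, put no $d$, and the last clause of (6$'$) is then vacuous. In every case Lemma~\ref{3dts} produces a valid orientation of the 3DTS graph, and when an orientation of $r$ was imposed this orientation extends it (by the standing convention on valid orientations with a directed vertex), so it is a valid orientation of $G$.

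Given Lemma~\ref{3dts}, there is no real obstacle: the argument is pure matching of definitions. The one spot where care is genuinely needed --- and where the substantive work lives --- is Lemma~\ref{3dts} itself, which re-proves Claim~\ref{3cutsdts} now that Theorem~\ref{maindts} is available: a minimal 3DTS graph with no $2$-robust $3$-edge-cut is a DTS graph and hence has a valid orientation by Theorem~\ref{maindts}, while a minimal 3DTS graph with a $2$-robust $3$-edge-cut $\delta(A)$ (chosen so that $d\in A$ and $t\notin A$) splits into two strictly smaller 3DTS graphs --- one obtained by contracting $G-A$, the other by contracting $A$ to a directed vertex --- whose valid orientations glue along the cut, now without appeal to a minimal counterexample of Theorem~\ref{maindts}. (Corollary~\ref{mainrst} follows from the identical template, with ``3DTS'' and Lemma~\ref{3dts} replaced throughout by ``DTS'' and Theorem~\ref{maindts}.)
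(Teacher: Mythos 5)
Your reduction --- orient the three edges at $r$ to realize $p(r)$ (possible since $\mathrm{indeg}-\mathrm{outdeg}\in\{-3,-1,1,3\}$ covers all residues mod $3$), declare $d:=r$, check the 3DTS clauses including $\deg(d)=3\le 5-a$ with $a\le 2$, and invoke Lemma~\ref{3dts} --- is correct and is exactly the argument the paper intends when it calls this an ``immediate consequence'' of Theorem~\ref{maindts}; your accompanying sketch of Lemma~\ref{3dts} via the induction of Claim~\ref{3cutsdts} with Theorem~\ref{maindts} replacing minimality is also the intended one. No gaps.
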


\section{Two Faces}

\label{twoface}

In this section we extend Theorem \ref{mainplanar} to allow low degree vertices on the boundaries of two specified faces of a graph. 

\begin{definition}
An \emph{FT graph} is a graph $G$ embedded in the plane, together with a valid prescription function $p: V(G)\rightarrow\{-1,0,1\}$, such that:
\begin{enumerate}
\item $G$ is $3$-edge-connected,
\item $G$ has two specified faces $F_G$ and $F_G^*$, and at most one specified vertex $d$ or~$t$,
\item there is at least one vertex in common between $F_G$ and~$F_G^*$,
\item if $d$ exists, then it has degree $3$, $4$, or $5$, is oriented, and is in the boundary of both $F_G$ and~$F_G^*$,
\item if $t$ exists, then it has degree $3$ and is in the boundary of at least one of $F_G$ and~$F_G^*$,
\item $G$ has at most one $3$-edge-cut, which can only be $\delta(\{d\})$ or $\delta(\{t\})$, and
\item every vertex not in the boundary of $F_G$ or $F_G^*$ has $5$ edge-disjoint paths to the union of the boundaries of $F_G$ and~$F_G^*$.
\end{enumerate}
We define all $3$-edge-connected graphs on at most two vertices to be FT graphs, regardless of vertex degrees. 
\end{definition}

We note that a DTS graph where at most one of $d$, $t$, and $s$ exists is an FT graph. 

\begin{theorem}
Every FT graph has a valid orientation. 
\label{mainft}
\end{theorem}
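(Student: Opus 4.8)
The plan is to mimic the structure of the proof of Theorem~\ref{maindts}: take a minimal counterexample $G$ with respect to the lexicographic order $(|E(G)|, |E(G)| - \deg(d))$ (reading $\deg(d) = 0$ if $d$ does not exist), dispose of the trivial base cases (no edges, or all edges already oriented), and then derive a contradiction by exhibiting a reduction to a strictly smaller FT graph — or, in cases producing a $3$-edge-cut that splits off a prescribed vertex, to the analogue of a 3DTS graph, which one would first need to cover with a separate claim (a ``3FT graph has a valid orientation'' lemma proved by induction on $|E(G)|$, exactly as Claim~\ref{3cutsdts} does for 3DTS graphs, splitting along a $2$-robust $3$-edge-cut into two smaller 3FT graphs). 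The orientation-transfer mechanism is identical throughout: contract one side of a small cut, invoke minimality to orient the smaller graph, transfer, then contract the other side (possibly deleting a boundary edge to fix degrees) and invoke minimality again.

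The sequence of structural properties to establish, in order, should parallel DTS1--DTS8 but adapted to the two-face setting: first, $G$ has no loop, no unoriented parallel edges, and no cut vertex; second, $G$ has no small internal $2$-robust edge-cut (no $2$-robust $4$-cut separating $d$ from $t$, no $2$-robust $5$-cut separating $d$ from $t$ when appropriate, and no internal $2$-robust $6$-cut) — here ``internal'' must be read relative to the \emph{union} of the boundaries of $F_G$ and $F_G^*$, and the definitions in Section~\ref{projprelim} were set up precisely so that the reduction operations behave correctly on the pair of specified faces; third, an analogue of DTS3 controlling chords of either specified face incident with a low-degree vertex; fourth, that $d$ and $t$ must both exist (if $d$ is absent, orient or orient-and-delete a minimum-degree boundary vertex; if $t$ is absent, reduce at $d$ using that $d$ is not adjacent to a vertex of degree $\le 4$ via parallels, cf.\ Claims~\ref{dadj} and~\ref{nonadj}); and then a short endgame analysing the local structure around $d$ and $t$ and around the common vertex of $F_G$ and $F_G^*$, showing the boundary must collapse to a bounded configuration that itself admits a reduction. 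The role played in Theorem~\ref{maindts} by ``two degree-$3$ vertices $s,t$ forcing $\deg(d)=3$'' is here played by ``the single low-degree vertex plus the forced common vertex of the two faces,'' so the endgame should be shorter.

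The main obstacle I anticipate is the bookkeeping around the common vertex of $F_G$ and $F_G^*$ and how the reduction operations interact with having two specified faces simultaneously. Specifically: when we delete a boundary edge or vertex, or lift a pair of edges, or contract a connected subgraph meeting a specified face in a path, we must verify not only that the $3$-edge-connectivity and the edge-disjoint-paths condition (7) are preserved — Lemma~\ref{edgedisjoint} and the remark following it handle this for the union of the two boundaries — but also that condition~(3), the existence of a common vertex of the two faces, survives, and that~(2)'s cap of one specified vertex is not violated. The delicate subcase is when a small cut has the common vertex, or $d$, or $t$, lying ``between'' the two faces, so that a naive contraction would merge $F_G$ and $F_G^*$ or destroy their shared vertex; one then has to choose which side to contract, or perform the contraction in stages as in operation~(5) of the specified-face conventions, to keep the two faces distinct with a common vertex. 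The reduction arguments near $d$ and $t$ themselves are essentially routine adaptations of the single-face case once this is set up, and the case $d$ has degree $5$ (whence $t$ does not exist) is genuinely easier; so the entire weight of the proof is on correctly tracking the two-face data through each reduction. I expect the write-up to be long and case-heavy but to contain no conceptually new difficulty beyond Theorem~\ref{maindts}.
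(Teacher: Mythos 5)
There are two genuine gaps here. First, your fourth step --- establishing ``that $d$ and $t$ must both exist'' --- contradicts the definition of an FT graph, which permits \emph{at most one} specified vertex ($d$ or $t$, not both). The correct statement, and what the paper proves (Claim~\ref{dort} followed by~\ref{chordss}'s successor FT4), is that $t$ exists and consequently $d$ does not; the rest of the argument then works exclusively with an unoriented degree-$3$ vertex $t$ and the minimum-degree common vertex of the two faces. An argument aimed at producing both $d$ and $t$ would be proving something false and would derail the endgame.

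Second, and more structurally: you propose a self-contained induction within FT graphs (plus a ``3FT'' auxiliary lemma), and when a reduction threatens to merge $F_G$ and $F_G^*$ you propose to ``choose which side to contract'' so as to keep the two faces distinct. That cannot be done in general, and it misses the actual mechanism of the paper's proof. The cuts are classified into Types 1, 2, 3 according to which of the two face boundaries they meet, and for a Type 2 cut (e.g.\ a $2$-robust $4$-edge-cut) the side not containing both boundaries \emph{must} be contracted to a directed vertex, after which the graph genuinely has a single specified face and is a DTS graph, not an FT graph; likewise deleting an edge common to both faces, or deleting a degree-$3$ directed vertex, produces a single-face graph with up to three degree-$3$ vertices, i.e.\ an RST graph. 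The paper closes these cases by invoking Theorem~\ref{maindts} and Corollary~\ref{mainrst} as already-proven external inputs --- this dependence is the entire reason Section~\ref{planar} precedes Section~\ref{twoface} --- whereas your induction has no way to absorb a reduced graph that is no longer an FT graph. Relatedly, your blanket ``no small internal $2$-robust cut, internal read relative to the union of the boundaries'' only covers Type 1 cuts; the paper also has to exclude all $2$-robust at most $5$-edge-cuts of Type 3 (which meet both boundaries and are not internal), and these are needed for the final lifting argument at the common vertex, whose contradiction (the obstructing cut would force $t$ onto the boundary of $F_G$ only and of $F_G^*$ only simultaneously) is quite different from the DTS8-style boundary collapse you anticipate.
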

\begin{proof}

Let $G$ be a minimal counterexample with respect to the number of edges, followed by the number of unoriented edges. If $|E(G)|=0$, then $G$ consists of only an isolated vertex, and thus has a trivial valid orientation. If $|E(G)|-deg(d)=0$ then $G$ has an existing valid orientation. Thus we may assume $G$ has at least one unoriented edge. \\

We will establish the following series of properties of $G$. 
\begin{description}
\item[FT1:] The graph $G$ does not contain a loop, unoriented parallel edges, or a cut vertex.
\end{description}
We define a \emph{Type 1 cut} to be an edge-cut $\delta(A)$ that does not intersect the boundary of $F_G$ or~$F_G^*$. Since $F_G$ and $F_G^*$ have a common vertex, it follows that they are either both contained in $A$ or both contained in~$G-A$. Hence this is an internal cut. We define a \emph{Type 2 cut} to be an edge-cut $\delta(A)$ that intersects the boundary of exactly one of $F_G$ and~$F_G^*$. Finally, we define a \emph{Type 3 cut} to be an edge-cut $\delta(A)$ that intersects the boundary of both $F_G$ and~$F_G^*$.
\begin{description}
\item[FT2:] The graph $G$ does not contain
\begin{enumerate}
\item[a)] a $2$-robust at most $5$-edge-cut of Type 1 or 3,
\item[b)] a $2$-robust $4$-edge-cut,
\item[c)] a $2$-robust $5$-edge-cut of Type 2 where $t$ is on the side containing the boundary of both $F_G$ and $F_G^*$, or
\item[d)] a $2$-robust $6$-edge-cut of Type 1.
\end{enumerate}
\item[FT3:] The graph $G$ does not have a chord of the cycle bounding $F_G$, that is incident with a vertex of degree $3$ or $4$. 
\item[FT4:] The vertex $t$ exists. 
\item[FT5:] There is no edge in common between $F_G$ and $F_G^*$. 
\end{description}

Verifying these properties forms the bulk of the proof of Theorem \ref{mainft}. We complete the proof by considering a vertex in common between $F_G$ and $F_G^*$ of least degree.  \\

Property \ref{basicft} is straightforward, and the proof is omitted. It can be read in \citep{thesis}. 

\begin{Prop}{FT1}
The graph $G$ does not contain a loop, unoriented parallel edges, or a cut vertex. 
\label{basicft}
\end{Prop}

We consider edge-cuts in~$G$.

\begin{Prop}{FT2}
The graph $G$ does not contain
\begin{enumerate}
\item[a)] a $2$-robust at most $5$-edge-cut of Type 1 or 3, 
\item[b)] a $2$-robust $6$-edge-cut of Type 1,
\item[c)] a $2$-robust $4$-edge-cut, or
\item[d)] a $2$-robust $5$-edge-cut of Type 2 where $t$ is on the side containing the boundary of both $F_G$ and $F_G^*$.
\end{enumerate}
\label{cutsft}
\end{Prop}
\begin{proof}\mbox{}
\begin{enumerate}
\item[a)] Suppose that $G$ does contain a $2$-robust at most $5$-edge-cut $\delta_G(A)$ of Type 1 or~3. Assume that $d,t\not\in G-A$, and, if the cut is of Type 1, the boundaries of $F_G$ and $F_G^*$ are in~$A$.  Let $G'$ be the graph obtained from $G$ by contracting $G-A$ to a single vertex. The resulting vertex $v$ has degree $4$ or~$5$. If $\delta_G(A)$ is of Type 1, then $v$ has degree~$5$. If $v$ has degree $4$, then the cut is of Type 3, and $v$ is on the boundary of both specified faces in~$G'$. If $G'$ contains a $2$-robust cut $\delta_{G'}(B)$ of size at most $3$, then such a cut also exists in $G$, a contradiction. Hence $G'$ is an FT graph and has a valid orientation by the minimality of~$G$. Transfer this orientation to~$G$. \\

Let $G''$ be the graph obtained from $G$ by contracting $A$ to a single vertex~$v$. This vertex has degree $4$ or $5$ and is oriented. If $\delta_G(A)$ is of Type 3, then $v$ is on the boundary of both specified faces in~$G''$. If $\delta_G(A)$ is of Type 1, then we can choose both specified faces to be incident with~$v$. If $G''$ has a $2$-robust cut $\delta_{G''}(B)$ of size at most $3$, then such a cut also exists in $G$, a contradiction unless it is one of the specified vertices. Thus $G''$ is an FT graph and has a valid orientation by the minimality of~$G$. Transfer this orientation to $G$ to obtain a valid orientation of $G$, a contradiction.  

\item[b)] This case works in the same way as~a). In $G''$, there is a degree $6$ oriented vertex, and no degree $3$ or $4$ vertices. Hence we may delete one boundary edge incident with $v$ to obtain a graph $G'''$ with a degree $5$ oriented vertex and no degree $3$ vertex. If $G'''$ contains a $2$-robust cut of size at most $3$, then $G$ has a corresponding cut of size $4$, contradicting~a). Thus $G'''$ is an FT graph and has a valid orientation by the minimality of~$G$. This leads to a valid orientation of $G$, a contradiction. Hence no such cut exists.

\item[c)] Suppose that $G$ does contain a $2$-robust $4$-edge-cut $\delta_G(A)$. By a) $\delta_G(A)$ is of Type~2. Let $A$ be the side containing (part of) the boundaries of both $F_G$ and~$F_G^*$. Then $d\in A$ if it exists. Let $G'$ be the graph obtained from $G$ by contracting $G-A$ to a single vertex. The resulting vertex $v$ has degree $4$ and is on the boundary of a specified face. If $G'$ contains a $2$-robust cut $\delta_{G'}(B)$ of size at most $3$, then such a cut also exists in $G$, a contradiction unless it is one of the specified vertices. Hence $G'$ is an FT graph and has a valid orientation by the minimality of~$G$. Transfer this orientation to~$G$. \\

Let $G''$ be the graph obtained from $G$ by contracting $A$ to a single vertex~$v$. This vertex has degree $4$ and is oriented. There is only one specified face, which may contain~$t$. If $G''$ has a $2$-robust cut $\delta_{G''}(B)$ of size at most $3$, then such a cut also exists in $G$, a contradiction unless it is one of the specified vertices. Thus $G''$ is a DTS graph and has a valid orientation by Theorem~\ref{maindts}. Transfer this orientation to $G$ to obtain a valid orientation of $G$, a contradiction.  

\item[d)] This case works in the same way as~c). In $G''$ there is a degree $5$ oriented vertex and no degree $3$ vertex. Thus $G''$ is a DTS graph and has a valid orientation by Theorem~\ref{maindts}. Transfer this orientation to $G$ to obtain a valid orientation of $G$, a contradiction. \qedhere
\end{enumerate}
\end{proof}\medskip

We now consider the local properties of the graphs at vertices of low degree.

\begin{Prop}{FT3}
The graph $G$ does not have a chord of the cycle bounding $F_G$ that is incident with a vertex $u$ of degree $3$ or~$4$. 
\label{chordss}
\end{Prop}
\begin{proof}
Suppose that such a chord $uv$ exists, where $deg_G(u)\in\{3,4\}$. Let $H$ and $K$ be subgraphs of $G$ such that $H\cap K=\{\{u,v\},\{uv\}\}$, $H\cup K=G$, and $F_G^*$ is in~$H$. Note that this implies $d\in H$ if it exists.  \\

Suppose that $\delta(H)$ is not $2$-robust. Then $K$ contains $d$, else $G$ has unoriented parallel edges, and thus a valid orientation by \ref{basicft}. By definition, either $u$ or $v$ is~$d$. Suppose that $v=d$. Since $G$ has no unoriented degree $3$ vertex, $|\delta(H-\{u\})|\leq 3$, a contradiction. Now consider the case where $u=d$. Since $G$ has no unoriented parallel edges, $\delta(H)$ is a $3$-edge-cut and $G$ contains a degree $3$ vertex, a contradiction. Hence we may assume that $\delta(H)$ is $2$-robust.\\

Suppose that $\delta(K)$ is not $2$-robust. Then $|V(H)|=3$. If $u$ or $v$ is $d$, the above argument applies. Thus we may assume that $d$ is in $V(H)-\{u,v\}$. If there are parallel edges with endpoints $d$ and $u$, then $\delta(\{d,u\})$ is an at most $5$-edge-cut. Orient $u$ and contract the parallel edges between $d$ and $u$, calling the resulting graph~$G'$. Note that the vertex of contraction has the same degree as~$d$. Thus it is clear that $G'$ is an FT graph, and thus has a valid orientation by the minimality of~$G$. This leads to a valid orientation of $G$, a contradiction. Suppose that there are not parallel edges with endpoints $d$ and~$u$. Then there are parallel edges with endpoints $d$ and~$v$. Since $|\delta(\{d,v\})|\geq 5$, $deg_K(v)\geq 4$. Orient~$u$. Then $K$ is an FT graph, and has a valid orientation by the minimality of~$G$. This leads to a valid orientation of $G$, a contradiction. We may now assume that $\delta(K)$ is $2$-robust. \\

By definition, $\delta(H)$ and $\delta(K)$ have size at least~$4$. Hence $deg_G(v)\geq 6$, and so $v$ is not $d$ or~$t$. In addition, $v$ has degree at least $3$ in both $H$ and~$K$. \\

Suppose that $u\neq d$. Then in $H$, contract~$uv$. The graph $H/uv$ is an FT graph, and so by the minimality of $G$, $H/uv$ has a valid orientation. Transfer this orientation to $G$, and orient~$u$. In $K$, if $u$ has degree $2$, add an edge $e$ directed from $u$ to~$v$. Then $u$ is a directed vertex of degree~$3$. Since $F_G^*$ is in $H$, we can choose the second specified face to be incident with~$u$. Hence $K+e$ is an FT graph. By the minimality of $G$, $K'$ has a valid orientation. This leads to a valid orientation of $G$, a contradiction. \\

Thus $u=d$. Then in both $H$ and $K$ add a directed edge from $u$ to~$v$. It is clear that $H+e$ and $K+e$ are FT graphs, so by the minimality of $G$, they have valid orientations. Transfer the orientations of $H+e$ and $K+e$ to $G$ to obtain a valid orientation of $G$, a contradiction. Thus no such chord exists. 
\end{proof}

Similarly, no such chord of $F_G^*$ exists.

\begin{claim} 
The graph $G$ contains $d$ or $t$. 
\label{dort}
\end{claim}
\begin{proof}
Suppose for a contradiction that neither $d$ nor $t$ exists in~$G$. Let $v$ be a vertex in both $F_G$ and~$F_G^*$. If $deg(v)\leq 5$, orient $v$, calling the resulting graph~$G'$. Then it is clear that $G'$ is an FT graph, and has a valid orientation by the minimality of~$G$. This is a valid orientation of $G$, a contradiction. \\

We may assume $deg(v)\geq 6$. Orient and delete a boundary edge incident with $v$, calling the resulting graph~$G'$. Then $G'$ has at most one vertex of degree~$3$. If $G'$ contains a $2$-robust at most $3$-edge-cut, then $G$ contains a corresponding $2$-robust at most $4$-edge-cut, a contradiction. Hence $G'$ is an FT graph and has a valid orientation by the minimality of~$G$. This leads to a valid orientation of $G$, a contradiction.  \end{proof}

\begin{Prop}{FT4}
The vertex $t$ exists.
\end{Prop}
\begin{proof}
Suppose that $t$ does not exist. Then by Claim \ref{dort}, $d$ exists. By definition, $d$ is on the boundaries of both $F_G$ and~$F_G^*$. Suppose that $d$ has degree~$3$. Let $G'$ be the graph obtained from $G$ by deleting~$d$. Then $G'$ has at most three vertices of degree $3$ and no oriented vertex. If $G'$ contains a $2$-robust at most $3$-edge-cut, then $G$ contains a corresponding $2$-robust at most $4$-edge-cut, a contradiction. Hence $G'$ is an RST graph, and has a valid orientation by Corollary~\ref{mainrst}. This leads to a valid orientation of $G$, a contradiction. \\

Therefore, $d$ has degree $4$ or~$5$. Suppose that an edge $e$ incident with $d$ is in the boundary of $F_G$ and~$F_G^*$. Let $G'$ be the graph obtained from $G$ by deleting~$e$. Then $G'$ has at most one vertex of degree $3$ and an oriented vertex of degree $3$ or~$4$. If $G'$ contains a $2$-robust at most $3$-edge-cut, then $G$ contains a corresponding $2$-robust at most $4$-edge-cut, a contradiction. Hence $G'$ is a DTS graph, and has a valid orientation by Theorem~\ref{maindts}. This leads to a valid orientation of $G$, a contradiction. \\

We now assume that $F_G$ and $F_G^*$ do not have an edge in common incident with~$d$. Let $e_1$, $e_2$, $e_3$, $e_4$, and (possibly) $e_5$ be the edges incident with $d$ in cyclic order, where $e_1$ is on the boundary of $F_G$ and $e_2$ is on the boundary of~$F_G^*$. Let $G'$ be the graph obtained from $G$ by deleting $e_1$ and~$e_2$. Then $G'$ contains at most two vertices of degree $3$ and an oriented vertex of degree $2$ or~$3$. If $G'$ contains a $2$-robust at most $3$-edge-cut, then $G$ contains a corresponding $2$-robust either at most $4$-edge-cut, or $5$-edge-cut of Type 3, a contradiction. Let $G''$ be the graph obtained from $G'$ by adding a directed edge from $d$ to the other endpoint of $e_3$ if $deg_{G'}(d)=2$. Then $G'$ is a DTS graph, and has a valid orientation by Theorem~\ref{maindts}. This leads to a valid orientation of $G$, a contradiction. 
\end{proof}

\begin{claim}
The vertex $t$ is not in the boundary of both $F_G$ and $F_G^*$.
\label{notft}
\end{claim}
\begin{proof}
Suppose that $t$ is in the boundary of both $F_G$ and~$F_G^*$. Let $G'$ be the graph obtained from $G$ by orienting the edges incident with $t$ to satisfy $p(t)$. Then $G'$ is an FT graph and has a valid orientation by the minimality of $G$. This is a valid orientation of $G$, a contradiction. 
\end{proof}

\begin{Prop}{FT5}
There is no edge in common between $F_G$ and $F_G^*$. 
\label{nocommon}
\end{Prop}
\begin{proof}
Suppose for a contradiction that $G$ has an edge $e$ in the boundary of $F_G$ and~$F_G^*$. Then $e$ is not incident with $t$ by Claim~\ref{notft}. Let $G'$ be the graph obtained from $G$ by deleting~$e$. Then $G'$ has at most three vertices of degree $3$ and no oriented vertex. If $G'$ contains a $2$-robust at most $3$-edge-cut, then $G$ contains a corresponding $2$-robust at most $4$-edge-cut, a contradiction. Hence $G'$ is an RST graph, and has a valid orientation by Corollary~\ref{mainrst}. This leads to a valid orientation of $G$, a contradiction. 
\end{proof}

By definition, there exists a vertex in the boundaries of both $F_G$ and~$F_G^*$. Among all such vertices, let $v$ have the least degree, say~$k$. Let $e_1$, $e_2$, ..., $e_k$ be the edges incident with $v$ in cyclic order, where $e_1$ and $e_k$ are on the boundary of~$F_G$. Let $i$ be such that $e_i$, $e_{i+1}$ are on the boundary of~$F_G^*$.  Note that $i\neq 1$ and $i+1\neq k$ by \ref{nocommon}.\\

\begin{claim}
We have $k\geq 5$.
\end{claim}
\begin{proof}
The alternative is that $k=4$. Since $G$ does not contain unoriented parallel edges, at most one of the edges $e_1$, $e_2$, $e_3$, $e_4$ is incident with~$t$. Hence without loss of generality, we may assume that $e_1$ and $e_2$ are not incident with~$t$. Let $G'$ be the graph obtained from $G$ by lifting $e_3$ and $e_4$, and orienting and deleting $e_1$ and~$e_2$. Then $G'$ contains at most three vertices of degree~$3$. If $G'$ contains a $2$-robust at most $3$-edge-cut, then $G$ contains a corresponding $2$-robust at most $4$-edge-cut, or a $2$-robust at most $5$-edge-cut of Type 3, a contradiction. Then $G'$ is an RST graph, and has a valid orientation by Corollary~\ref{mainrst}. This leads to a valid orientation of $G$, a contradiction. 
\end{proof}

\begin{claim}
We have $i\neq 2$.
\end{claim}
\begin{proof}
Suppose that $i=2$. Let $G'$ be the graph obtained from $G$ by lifting $e_1$ and~$e_2$.  Then $G'$ contains at most two vertices of degree $3$: $v$ and~$t$. If $G'$ contains a $2$-robust at most $3$-edge-cut, then $G$ contains a corresponding $2$-robust at most $5$-edge-cut of Type 3, a contradiction. Then $G'$ is an RST graph, and has a valid orientation by Corollary~\ref{mainrst}. This leads to a valid orientation of $G$, a contradiction. Hence $i>2$. 
\end{proof}

Similarly we may assume that $i+1<k-1$. It follows that $k\geq 6$. \\

Let $G'$ be the graph obtained from $G$ by lifting $e_1$ and~$e_2$. Then $deg_{G'}(v)\geq 4$, so $G'$ has at most one degree $3$ vertex:~$t$. If $G'$ has a $2$-robust at most $2$-edge-cut, then $G$ has a corresponding $2$-robust at most $4$-edge-cut, a contradiction. If $G'$ is an FT graph and has a valid orientation by the minimality of $G$, this leads to a valid orientation of $G$, a contradiction. Hence we may assume that $G'$ is not an FT graph. Then $G'$ contains a $2$-robust $3$-edge-cut $\delta_{G'}(A)$. Now $\delta_G(A)$ is a $2$-robust $5$-edge-cut using $e_1$ and~$e_2$. By \ref{cutsft}, $\delta_G(A)$ is of Type 2 and separates $t$ from~$v$. \\

Let $G''$ be the graph obtained from $G$ by lifting $e_i$ and~$e_{i-1}$. Similarly, $G''$ is an FT graph and has a valid orientation by the minimality of $G$, unless $G''$ has a $2$-robust $5$-edge-cut $\delta_{G''}(B)$ of Type 2 using $e_i$ and $e_{i-1}$ that separates $t$ from~$v$.\\

But $\delta_G(A)$ intersects only the boundary of $F_G$, so $t$ is on the boundary of $F_G$ and not~$F_G^*$. Similarly $\delta_G(B)$ intersects only the boundary of $F_G^*$, so $t$ is on the boundary of $F_G^*$ and not $F_G$, a contradiction. \\

We conclude that no such counterexample exists. Therefore all FT graphs have a valid orientation. \end{proof} 

\section{Discussion}

\label{projdis}

In this section we relate the theorems of this paper to the $3$-Flow Conjecture and the Strong $3$-Flow Conjecture, and consider possible extensions of these results.\\

Theorem \ref{mainplanar} allows a directed vertex of degree $5$, or a directed vertex of degree $4$ and a degree $3$ vertex, but not both a directed vertex of degree $5$ and a degree $3$ vertex. The reason for this is that any graph with a degree $5$ directed vertex $d$ adjacent via parallel edges to a vertex $t$ of degree $3$ does not have a valid orientation for some prescription functions. For example, if the edges incident with $d$ and $t$ are directed into $t$, and $p(t)=-1$, then $G$ has no valid orientation. Such a graph can be seen in Figure~\ref{d5}. \\

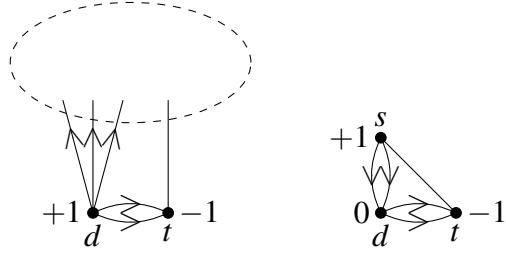
\begin{figure}
\begin{center}
\subfloat{
\begin{tikzpicture}
\draw[dashed] (0,1.5) ellipse (1.6cm and 0.8cm);
\filldraw (-0.5,-0.5) circle (2pt) node[below] {$d$};
\filldraw (0.5,-0.5) circle (2pt) node[below] {$t$};
\filldraw (-0.5,-0.5) circle (0pt) node[left] {$+1$};
\filldraw (0.5,-0.5) circle (0pt) node[right] {$-1$};
\draw (0.5,-0.5) -- (0.5,1);
\draw (-0.5,-0.5) -- (-0.5,1);
\draw (-0.5,-0.5) -- (-0.9,1);
\draw (-0.5,-0.5) -- (-0.1,1);
\draw (-0.5,-0.5) to[out=25,in=155] (0.5,-0.5);
\draw (-0.5,-0.5) to[out=-25,in=-155] (0.5,-0.5);
\filldraw (0,-0.38) circle (0pt) node {$>$};
\filldraw (0,-0.62) circle (0pt) node {$>$};
\filldraw (-0.5,0.5) circle (0pt) node[rotate=90] {$>$};
\filldraw (-0.23,0.5) circle (0pt) node[rotate=77] {$>$};
\filldraw (-0.77,0.5) circle (0pt) node[rotate=103] {$>$};
\end{tikzpicture}
}
\subfloat{
\begin{tikzpicture}
\filldraw (-0.5,-0.5) circle (2pt) node[below] {$d$};
\filldraw (0.5,-0.5) circle (2pt) node[below] {$t$};
\filldraw (-0.5,-0.5) circle (0pt) node[left] {$0$};
\filldraw (0.5,-0.5) circle (0pt) node[right] {$-1$};
\filldraw (-0.5,0.5) circle (2pt) node[above] {$s$};
\filldraw (-0.5,0.5) circle (0pt) node[left] {$+1$};
\draw (0.5,-0.5) -- (-0.5,0.5);
\draw (-0.5,-0.5) to[out=25,in=155] (0.5,-0.5);
\draw (-0.5,-0.5) to[out=-25,in=-155] (0.5,-0.5);
\draw (-0.5,-0.5) to[out=65,in=-65] (-0.5,0.5);
\draw (-0.5,-0.5) to[out=115,in=-115] (-0.5,0.5);
\filldraw (0,-0.38) circle (0pt) node {$>$};
\filldraw (0,-0.62) circle (0pt) node {$>$};
\filldraw (-0.62,0) circle (0pt) node[rotate=90] {$<$};
\filldraw (-0.38,0) circle (0pt) node[rotate=90] {$<$};
\draw (-2,0) circle (0pt);
\end{tikzpicture}
}
\caption{A graph with a directed vertex of degree $5$ ($4$), one (two) degree $3$ vertex (vertices), and no valid orientation.}
\label{d5}
\end{center}
\end{figure}

In Theorem \ref{maindts} we allow a directed vertex of degree $3$ with two other degree $3$ vertices. Again, a directed vertex of degree $4$ with two degree $3$ vertices is not possible. If $d$ is a degree $4$ directed vertex adjacent via parallel edges to a vertex $t$ of degree $3$, then there may not be an orientation of $t$ that extends the existing orientation of $d$ and meets~$p(t)$. Now $\delta(\{d,t\})$ is a $3$-edge-cut, but since the graph has a second degree $3$ vertex, such a $3$-edge-cut need not be $2$-robust. This graph can also be seen in Figure~\ref{d5}. \\

When increasing the number of unoriented degree $3$ vertices to three, we know of no graph or family of graphs that would rule out a directed vertex of degree~$3$. While the example in Figure \ref{d5} does not extend to this case, we note that there is an infinite family of graphs with three degree $3$ vertices and an oriented degree $4$ vertex that do not have a valid prescription. Let $G$ be a graph where the boundary of the outer face consists of a directed degree $4$ vertex $d$, and three degree $3$ vertices $r$, $s$, and $t$. Let $p(d)=p(t)=-1$, $p(r)=p(s)=0$, and assume that all edges incident with $d$ are directed out from~$d$. Let $A=G-\{d,r,s,t\}$. Then $\delta(A)$ is an internal $5$-edge-cut. We assume that $p(A)=-1$. Then $p(G)=0$, so $p$ is a valid prescription function. Since $rd$ is directed into $r$, all edges incident with $r$ must be directed into~$r$. Since $rs$ is directed out of $s$, all edges incident with $s$ must be directed out of~$s$. Then $st$ and $dt$ are directed into $t$. No direction of the remaining edge incident with $t$ meets $p(t)$, so $G$ does not have a valid orientation that meets~$p$. This family of graphs can be seen in Figure~\ref{33s}.\\

\begin{figure}
\begin{center}
\subfloat{
\begin{tikzpicture}[scale=2]
\draw[dashed] (0,0) circle (10pt) node {$-1$};
\filldraw (0,1) circle (2pt) node[above=1pt] {$d,-1$};
\filldraw (1,0) circle (2pt) node[right=1pt] {$r,0$};
\filldraw (0,-1) circle (2pt) node[below=1pt] {$s,0$};
\filldraw (-1,0) circle (2pt) node[left=1pt] {$t,-1$};

\draw (1,0) -- (0,1);
\draw (1,0) -- (0,-1);
\draw (-1,0) -- (0,1);
\draw (-1,0) -- (0,-1);
\draw (0,-1) -- (0,-0.3);
\draw (1,0) -- (0.3,0);
\draw (-1,0) -- (-0.3,0);
\draw (0,1) to[out=-65,in=65] (0.1,0.3);
\draw (0,1) to[out=-115,in=115] (-0.1,0.3);

\draw (0.5,0.5) circle (0pt) node[rotate=-45] {$>$};
\draw (-0.5,0.5) circle (0pt) node[rotate=-135] {$>$};
\draw (0.15,0.6) circle (0pt) node[rotate=-90] {$>$};
\draw (-0.15,0.6) circle (0pt) node[rotate=-90] {$>$};
\end{tikzpicture}
}
\subfloat{
\begin{tikzpicture}[scale = 2]
\draw[dashed] (0,0) circle (10pt) node {$0$};
\filldraw (0,1) circle (2pt) node[above=1pt] {$d,0$};
\filldraw (0.95,0.31) circle (2pt) node[right=1pt] {$r,0$};
\filldraw (-0.95,0.31) circle (2pt) node[left=1pt] {$u,0$};
\filldraw (0.59,-0.81) circle (2pt) node[right=1pt] {$s,0$};
\filldraw (-0.59,-0.81) circle (2pt) node[left=1pt] {$t,0$};

\draw (0,1) -- (0,0.3);
\draw (0.95,0.31) -- (0.3,0.1);
\draw (-0.95,0.31) -- (-0.3,0.1);
\draw (0.59,-0.81) -- (0.18,-0.24);
\draw (-0.59,-0.81) -- (-0.18,-0.24);

\draw (0,1) -- (0.95,0.31);
\draw (0.95,0.31) -- (0.59,-0.81);
\draw (0,1) -- (-0.95,0.31);
\draw (-0.95,0.31) -- (-0.59,-0.81);
\draw (0.59,-0.81) -- (-0.59,-0.81);

\draw (0,0.7) circle (0pt) node[rotate=90] {$<$};
\draw (0.48,0.65) circle (0pt) node[rotate=-36] {$>$};
\draw (-0.48,0.65) circle (0pt) node[rotate=-144] {$>$};

\draw (0,-1.3) circle (0pt);
\end{tikzpicture}
}
\caption{A graph with a directed vertex of degree $4$ ($3$), three (four) degree $3$ vertices, and no valid orientation.}
\label{33s}
\end{center}
\end{figure}
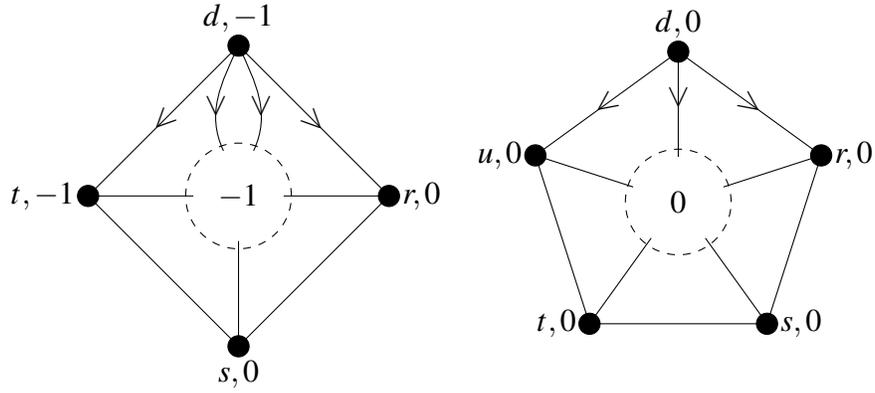

If we allow a directed vertex $d$ of degree $3$ and four vertices of degree $3$, then we obtain a similar family of graphs. Let the boundary of the outer face of $G$ consist of $d$ and the four vertices of degree $3$, producing an internal $5$-edge-cut. Assume that all vertices have prescription zero. Each of the five vertices on the boundary of the outer face has either all edges pointing into the vertex, or all edges pointing out. It is clear that with an odd length boundary, this is not possible. Hence $G$ does not have a 3-flow (as opposed to simply a modulo $3$ orientation meeting~$p$). This family of graphs can also be seen in Figure~\ref{33s}. \\

Similarly, we consider extending Corollary \ref{mainrst} to allow further degree $3$ vertices. It is clear that a family of graphs with five vertices of degree $3$ similar to that with four and a directed degree $3$ vertex can be constructed, that do not have nowhere zero $\mathbb{Z}_3$-flows. We conjecture that such graphs with four degree $3$ vertices or three degree $3$ vertices and a directed degree $3$ vertex have a valid orientation to meet a given valid prescription function. If true, then this would be a best possible such result. 

\begin{conjecture}
Let $G$ be a graph embedded in the plane, together with a valid prescription function $p: V(G)\rightarrow\{-1,0,1\}$ such that:
\begin{enumerate}
\item $G$ is $3$-edge-connected,
\item $G$ has a specified face $F_G$, and at most four specified vertices $d$, $r$, $s$, and $t$, 
\item if $d$ exists, then it has degree $3$, is in the boundary of $F_G$, and may be oriented,
\item if $r$, $s$, or $t$ exists, then it has degree $3$ and is in the boundary of $F_G$,
\item $G$ has at most four $3$-edge-cuts, which can only be $\delta(\{d\})$, $\delta(\{r\})$, $\delta(\{s\})$, and $\delta(\{t\})$, and
\item every vertex not in the boundary of $F_G$ has $5$ edge-disjoint paths to the boundary of $F_G$.
\end{enumerate}
Then $G$ has a valid orientation. 
\end{conjecture}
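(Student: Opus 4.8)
The plan is to run the minimal-counterexample machinery of Theorems~\ref{maindts} and~\ref{mainft}. Let $G$ be a counterexample minimizing $(|E(G)|,\,|E(G)|-\deg(d))$ lexicographically, where $\deg(d)$ is read as $0$ when $d$ is absent or unoriented, so $G$ has at least one unoriented edge. First dispose of the routine structure, exactly as in Proposition~\ref{basicdts}: $G$ has no loop, no unoriented parallel edges, and no cut vertex, hence every face is bounded by a cycle. Then, mirroring Propositions~\ref{cutsdts} and~\ref{cutsft}, rule out the small $2$-robust edge-cuts that can be split by the contract-both-sides argument: a $2$-robust $4$-edge-cut, a $2$-robust $5$-edge-cut separating $d$ from too few of the unoriented degree-$3$ vertices, and an internal $2$-robust $6$-edge-cut. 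The bookkeeping has to be redone with four specified degree-$3$ vertices in play: when one side of a cut is contracted to a single vertex $v$, the count of unoriented degree-$3$ vertices on each resulting side must stay within budget, and the contracted graph must be of an allowed type — this forces an auxiliary notion, a ``$4$RST graph'' analogous to the $3$RST/3DTS graphs of Section~\ref{planar}, in which further $3$-edge-cuts are permitted provided each is non-internal (separates one specified vertex from the rest); as in Claim~\ref{3cutsdts} one shows every such graph smaller than $G$ has a valid orientation.

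Next, control chords of $F_G$ (and of any second face that transiently arises): a chord incident with a vertex of degree $3$ or $4$ either does not exist or forces a rigid local picture, by the argument of Proposition~\ref{chordsdts}. Then prove existence of the specified vertices: if strictly fewer than four of $d,r,s,t$ are present, $G$ is already (or reduces, after orienting or deleting a boundary edge at a minimum-degree boundary vertex, to) an RST or DTS graph, and we are done by Corollary~\ref{mainrst} and Theorem~\ref{maindts}; so assume all four exist. The extremal configurations are then $d$ oriented of degree $3$ together with three unoriented degree-$3$ vertices, and four mutually unoriented degree-$3$ vertices; in both, every vertex other than $d,r,s,t$ has degree at least $4$. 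Now carry out the local analysis around the degree-$3$ vertices, following Propositions~\ref{stnotadj}--\ref{boundary}: show no two specified vertices are adjacent; each boundary neighbour of a specified vertex has degree exactly $4$; the ``third'' neighbour of each specified vertex is an internal degree-$5$ vertex adjacent to both of its boundary partners; and finally that the boundary of $F_G$ is exactly the four specified vertices together with their boundary neighbours, in a cyclically forced pattern with coincidences among the neighbours (as in $v=x$ in Proposition~\ref{boundary}). Then $G$ has an internal edge-cut $\delta(A)$ with $A$ the boundary plus the four internal degree-$5$ vertices; its size is small (roughly $9$), it is $2$-robust unless $G$ is tiny, and contracting $G-A$, transferring the orientation, then contracting $A$ and deleting the two edges at each internal degree-$5$ vertex, yields smaller graphs of an allowed type, contradicting minimality.

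The crux — and the reason this is only a conjecture — is the tightness exhibited by the examples of Section~\ref{projdis}: with a directed degree-$3$ vertex together with three (or four unoriented) degree-$3$ vertices, the dangerous obstruction is precisely an internal $5$-edge-cut on an odd boundary, and it is only the hypothesis that every $3$-edge-cut is one of $\delta(\{d\}),\ldots,\delta(\{t\})$ (and hence non-internal) that keeps such cuts out of a minimal counterexample. The real work is verifying that every reduction respects this — that no lift or deletion near a degree-$3$ vertex creates an internal $2$-robust $5$-edge-cut, and that whenever such a cut is unavoidable it must separate one specified vertex from the others so the $4$RST machinery applies. This is markedly more delicate than the three-vertex case, since with four degree-$3$ vertices the budget of non-trivial small cuts is exhausted and the degree-counting arguments have essentially no slack. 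I also expect a two-face auxiliary statement (an FT-type result) to be needed inside the argument, just as Theorem~\ref{maindts} is invoked inside Theorem~\ref{mainft}, which adds a further layer of case analysis.
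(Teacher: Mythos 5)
This statement is posed in the paper as an open conjecture; no proof is given there, so your attempt cannot be measured against an existing argument and must stand entirely on its own. As written it does not: it is a programme rather than a proof. You yourself defer the decisive steps (``the real work is verifying that every reduction respects this''), and those are exactly the steps that are not routine transplants of the machinery of Theorem~\ref{maindts}.

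Three concrete gaps. First, the cut-splitting bookkeeping of \ref{cutsdts} breaks in a new way: a $2$-robust $4$-edge-cut $\delta(A)$ with the directed degree-$3$ vertex $d$ in $A$ and two unoriented degree-$3$ vertices in $G-A$ contracts, on the $G-A$ side, to a directed degree-$4$ vertex together with two unoriented degree-$3$ vertices --- precisely the configuration that Figure~\ref{d5} exhibits as having no valid orientation, and which violates condition (5) of the DTS definition ($\deg(d)\le 5-a$). Such cuts therefore cannot be split and must be excluded or handled structurally; you give no argument, and with four specified vertices there are several more irreducible cut types ($4$- and $5$-cuts distributing $d,r,s,t$ in various ways) than in the three-vertex case. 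Second, your ``$4$RST'' analogue of Claim~\ref{3cutsdts} is asserted by analogy, but a $3$-edge-cut may now split the four specified vertices $2$--$2$, and after contraction each piece carries a directed degree-$3$ vertex plus two unoriented degree-$3$ vertices; one must check this always lands in a class already proven (DTS) rather than back in the conjectured class, and that the induction is well-founded --- plausible, but not shown. Third, the terminal configuration: the closing step of Theorem~\ref{maindts} contracts a $7$-edge-cut to a directed degree-$5$ vertex after deleting the two surplus edges at a single internal degree-$5$ vertex, and verifies no new small cuts or degree-$3$ vertices appear. Your analogous cut has size roughly $9$, requiring the deletion of four edges at two internal vertices; this can create new degree-$3$ or degree-$4$ vertices and small cuts that you have not controlled, and there is no reason to expect the same verification to go through. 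Given that the paper's Section~\ref{projdis} shows the statement is extremal (one more degree-$3$ vertex yields counterexamples), the degree-counting arguments have no slack, and these are precisely the places a proof could fail; until they are carried out in detail, this remains a conjecture.
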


Theorem \ref{mainft} requires $d$, if it exists, to be in the boundaries of both $F_G$ and $F_G^*$. In the following conjecture we remove this hypothesis, and the requirement that the two specified faces have a vertex in common. 

\begin{conjecture}
Let $G$ be a graph embedded in the plane, together with a valid prescription function $p: V(G)\rightarrow\{-1,0,1\}$, such that:
\begin{enumerate}
\item $G$ is $3$-edge-connected,
\item $G$ has two specified faces $F_G$ and $F_G^*$, and at most one specified vertex $d$ or $t$,
\item if $d$ exists, then it has degree $3$, $4$, or $5$, is oriented, and is in the boundary of $F_G$ or $F_G^*$,
\item if $t$ exists, then it has degree $3$ and is in the boundary of $F_G$ or $F_G^*$,
\item $G$ has at most one $3$-edge-cut, which can only be $\delta(\{d\})$ or $\delta(\{t\})$, and
\item every vertex not in the boundary of $F_G$ or $F_G^*$ has $5$ edge-disjoint paths to the union of the boundaries of $F_G$ and $F_G^*$.
\end{enumerate}
Then $G$ has a valid orientation. 
\label{conj4}
\end{conjecture}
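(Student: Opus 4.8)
The plan is to run the minimal-counterexample machine of Theorems~\ref{maindts} and~\ref{mainft}. The key observation is that an instance of the conjecture in which $d$ does not exist and the two specified faces share a vertex is already an FT graph, so Theorem~\ref{mainft} disposes of it; the genuinely new content is therefore (a) eliminating $d$ when it lies on only one of $F_G$, $F_G^*$, and (b) handling two specified faces with no common vertex. So one takes a counterexample $G$ minimal with respect to $(|E(G)|,|E(G)|-deg(d))$, aiming to reduce to a smaller instance of the conjecture, to an FT graph, to a DTS graph (Theorem~\ref{maindts}), or to an RST graph (Corollary~\ref{mainrst}). The opening steps carry over essentially unchanged: $G$ has no loop, no unoriented parallel edges and no cut vertex, so every face is a cycle; $G$ has no chord of $F_G$ or $F_G^*$ incident with a vertex of degree $3$ or $4$ (split $G$ along the chord, as for~\ref{chordss}); $G$ has no small $2$-robust edge-cut (contract each side and invoke minimality, Theorem~\ref{maindts}, or Corollary~\ref{mainrst}); the single special vertex exists (otherwise orient, or orient and delete a boundary edge at, a minimum-degree vertex lying on a face); and one finishes by lifting a consecutive pair of edges at a minimum-degree vertex of a face, avoiding $t$, as in the endgame of Theorem~\ref{mainft}.

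For (a), if $deg(d)\in\{4,5\}$ delete a boundary edge at $d$, and if $deg(d)=3$ delete $d$, using the no-small-cut properties and arguments in the spirit of Claims~\ref{dadj} and~\ref{nonadj} to guarantee no new low-degree vertex or small cut appears, so that the reduced graph is again a valid instance, now without $d$. For (b), since $F_G$ and $F_G^*$ need not share a vertex there are edge-cuts $\delta(A)$ that \emph{separate} the two faces, with $F_G\subseteq A$ and $F_G^*\subseteq G-A$ --- a cut type absent from the classification used for Theorem~\ref{mainft}. A minimal separating cut has size at least $4$ (the permitted $3$-cuts, being $\delta(\{d\})$ or $\delta(\{t\})$, cannot enclose a cycle-bounded face), and a $2$-robust separating cut of size at most $5$ one would eliminate by contracting $G-A$ to a bounded-degree vertex $v$, and $A$ to a bounded-degree vertex, producing in each case a graph with a single relevant specified face and at most one special vertex --- a DTS or RST graph --- and transferring orientations back. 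Once $d$ is gone and no such separating cut remains, one is tempted to contract a shortest path joining $\partial F_G$ to $\partial F_G^*$ to manufacture a common vertex and appeal to Theorem~\ref{mainft} directly.

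The main obstacle is precisely this separating-cut analysis together with the need to control both faces at once. For the contraction of a small separating cut to yield a legitimate DTS or RST graph one needs the new vertex to lie on the boundary of the surviving face and the ``$5$ edge-disjoint paths'' hypothesis (Lemma~\ref{edgedisjoint}) to survive, and neither is forced merely by taking the separating cut of minimum size; some finer choice of cut (for instance innermost around $F_G$) together with a use of the path condition seems necessary, and this is delicate. The shortest-path reduction has its own gap: contracting the path deletes its edges, and an orientation of the contracted graph need not extend over them, so that move does not close the argument. And in the endgame a lifting performed at a vertex of one face does not obviously avoid creating a forbidden chord, a new low-degree vertex, or a small separating cut on the side of the other face. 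Obtaining a uniform handle on two far-apart faces is the crux, and --- given that the analogue with two low-degree special vertices and two faces is already false (Section~\ref{projdis}) --- the available slack is small, which is why the statement is posed here only as a conjecture.
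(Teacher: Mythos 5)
The statement you have attempted is posed in the paper only as Conjecture~\ref{conj4}; the paper offers no proof, so there is nothing to compare your argument against except the theorems (\ref{maindts}, \ref{mainft}, Corollary~\ref{mainrst}) any proof would have to extend. What you have written is a strategy outline with self-acknowledged holes, not a proof, and the holes you flag are in fact the substantive obstacles. The most concrete one is the separating-cut reduction. If $\delta(A)$ is a $2$-robust $5$-edge-cut with $F_G$'s boundary in $A$ and $F_G^*$'s boundary in $G-A$, and the special vertex $t$ lies in $G-A$, then contracting $A$ to a directed vertex $d'$ produces a one-face instance containing both a directed vertex of degree $5$ and an unoriented degree-$3$ vertex; this lies outside Theorem~\ref{maindts} (which requires $\deg(d)\leq 5-a$), and Figure~\ref{d5} shows such configurations can have no valid orientation at all. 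Moreover, since a separating cut need not meet either face boundary, the new vertex need not lie on the boundary of the surviving specified face, and the five-edge-disjoint-paths hypothesis need not survive the contraction (a vertex of $G-A$ may have had most of its paths routed through $\delta(A)$ to $F_G$). So the reduction you propose does not return a legitimate DTS/RST/FT instance, and this is precisely the regime exploited by the counterexample family of Section~\ref{projdis} for the two-special-vertex variant; the conjecture survives only because it forbids having both $d$ and $t$, and any proof must use that restriction in an essential way that your sketch does not.

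The second gap you name is also fatal as stated: contracting a shortest path joining the two face boundaries is not one of the sanctioned reductions. A valid orientation of $G/P$ assigns nothing to the edges of $P$, and since $P$ is not cut off from the rest of $G$ by a small edge-cut there is no residual subinstance in which those edges can be oriented afterwards; this is structurally different from the cut-based two-sided contractions used throughout Theorems~\ref{maindts} and~\ref{mainft}. Your reading of the landscape is accurate --- the one-face and shared-vertex cases reduce to \ref{maindts} and \ref{mainft}, and the genuinely new content is the far-apart-faces case --- but the proposal does not close either of the two gaps it identifies, and the statement remains, as in the paper, a conjecture.
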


To adapt the arguments from [Insert Citation] to the torus would require a version of Conjecture \ref{conj4} in which both $d$ and $t$ exist. The following example shows that this version does not hold. Consider a graph $G$ with two specified faces $F_G$ and $F_G^*$ such that every vertex is on the boundary of $F_G$ or~$F_G^*$. Let $t$ be a degree $3$ vertex on the boundary of $F_G^*$, and $w$ a degree $5$ vertex adjacent to $t$ on the boundary of~$F_G$. Suppose that $G$ has a cycle $P=wv_0v_1...v_nw$ where $n=6k$ for some $k\in \mathbb{Z}^+$, $v_i$ has degree $4$ for all $1\leq i\leq n$, $v_1$, $v_2$, ..., $v_n$ alternate between the boundaries of $F_G$ and $F_G^*$, and $V(G)=\{t,w,v_1,v_2,...,v_n\}$. Assume that $d=v_{\frac{n}{2}}$. We set $w=v_{-1}=v_{n+1}$ and $t=v_{-2}=v_{n+2}$. See Figure~\ref{star}.\\

\begin{figure}[t]
\begin{center}
\begin{tikzpicture}[scale=1.4]
\draw (0,0) circle (50pt) node {$F_G$};
\draw (0,0) circle (100pt);
\draw (4,0) circle (0pt) node {$F_G^*$};

\filldraw (0,1.75) circle (2pt) node[below] {$t, 0$};
\filldraw (0,3.5) circle (2pt) node[above] {$w,0$};
\filldraw (1.52,0.88) circle (2pt) node[xshift=-0.7cm,yshift=-0.2cm] {$v_2,+1$};
\filldraw (0.88,1.52) circle (2pt) node[xshift=-0.3cm,yshift=-0.4cm] {$v_0,+1$};
\filldraw (0.88,-1.52) circle (2pt) node[xshift=-0.3cm,yshift=0.6cm] {$v_{3k-2},+1$};
\filldraw (0,-1.75) circle (2pt) node[above] {$d, -1$};
\filldraw (-1.52,0.88) circle (2pt) node[xshift=0.7cm,yshift=-0.3cm] {$v_{6k-2},-1$};
\filldraw (-0.88,1.52) circle (2pt) node[xshift=0.3cm,yshift=-0.4cm] {$v_{6k},-1$};
\filldraw (-0.88,-1.52) circle (2pt) node[xshift=0.3cm,yshift=0.6cm] {$v_{3k+2},+1$};
\filldraw (2.48,2.48) circle (2pt) node[right] {$v_1,+1$};
\filldraw (-2.48,2.48) circle (2pt) node[left] {$v_{6k-1},-1$};
\filldraw (0.91,-3.38) circle (2pt) node[below] {$v_{3k-1},+1$};
\filldraw (2.48,-2.48) circle (2pt) node[right] {$v_{3k-3},+1$};
\filldraw (-0.91,-3.38) circle (2pt) node[below] {$v_{3k+1},+1$};
\filldraw (-2.48,-2.48) circle (2pt) node[left] {$v_{3k+3},-1$};

\draw (0,3.5) -- (0,1.75);
\draw (0,3.5) -- (0.88,1.52);
\draw (0.88,1.52) -- (2.48,2.48);
\draw (2.48,2.48) -- (1.52,0.88);
\draw (1.52,0.88) -- (2.45,0.895);
\draw (0,3.5) -- (0,1.75);
\draw (0,3.5) -- (-0.88,1.52);
\draw (-0.88,1.52) -- (-2.48,2.48);
\draw (-2.48,2.48) -- (-1.52,0.88);
\draw (-1.52,0.88) -- (-2.45,0.895);
\draw (0,-1.75) -- (0.91,-3.38);
\draw (0.91,-3.38) -- (0.88,-1.52);
\draw (0.88,-1.52) -- (2.48,-2.48);
\draw (2.48,-2.48) -- (2,-1.68);
\draw (0,-1.75) -- (-0.91,-3.38);
\draw (-0.91,-3.38) -- (-0.88,-1.52);
\draw (-0.88,-1.52) -- (-2.48,-2.48);
\draw (-2.48,-2.48) -- (-2,-1.68);

\draw (2.25,0) circle (0pt) node {\vdots};
\draw (-2.25,0) circle (0pt) node {\vdots};

\draw (0.44,-1.7) circle (0pt) node[rotate=13] {$>$};
\draw (-0.44,-1.7) circle (0pt) node[rotate=167] {$>$};
\draw (0.46,-2.57) circle (0pt) node[rotate=-60] {$>$};
\draw (-0.46,-2.57) circle (0pt) node[rotate=-120] {$>$};
\end{tikzpicture}
\caption{A graph with two specified faces, a directed vertex of degree $4$, and no valid orientation.}
\label{star}
\end{center}
\end{figure}

Suppose that $p(d)=-1$ and all edges incident with $d$ are directed out from~$d$. Let $p(v_{\frac{n}{2}-1})=p(v_{\frac{n}{2}-2})=p(v_{\frac{n}{2}+1})=p(v_{\frac{n}{2}+2})=+1$. Let $p(v_j)=+1$ for $j<\frac{n}{2}-2$ and $p(v_j)=-1$ for $j>\frac{n}{2}+2$. Let $p(t)=p(w)=0$. Then it is clear that $p$ is a valid prescription function for~$G$. \\

\begin{lemma}
The graph $G$ does not have a valid orientation that meets $p$.
\label{star}
\end{lemma}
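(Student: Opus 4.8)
The plan is to argue by contradiction: suppose $G$ has a valid orientation $D$ meeting $p$. By hypothesis $D$ extends the given orientation of $d$, so all four edges at $d$ are directed away from $d$. The first step is to record the elementary consequence of the mod-$3$ condition that will drive everything: solving $2(\mathrm{indeg}_D(v))-\deg(v)\equiv p(v)\pmod 3$, a degree-$4$ vertex with $p(v)=1$ has either all four incident edges directed into $v$ or exactly one in and three out; symmetrically a degree-$4$ vertex with $p(v)=-1$ has all four out or exactly one out and three in; and a degree-$3$ vertex with $p(v)=0$ must have all three edges oriented the same way (as was already used for the families in Figures~\ref{d5} and~\ref{33s}).

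Next I would propagate forced orientations outward from $d$ along the cycle $P=wv_0v_1\cdots v_nw$ and along the two face boundaries. Since every edge at $d$ leaves $d$, each of its neighbours has a known in-edge; the point is that two adjacent degree-$4$ vertices both of prescription $+1$ cannot each be in the ``exactly one in'' case with their common edge directed out of the first and into the second, so one of them is forced into the ``all in'' case, and this forcing then cascades. Working this out, the edges of $P$ on the prescription-$+1$ side of $d$ are forced (they all point ``toward $w$''), and likewise the edges of $P$ on the prescription-$-1$ side, so that in both directions the boundary edge at $w$ is directed into $w$. As $w$ has degree $5$ and $p(w)=0$ it then has exactly one out-edge; pushing this through $w$ forces the orientation of $wt$, and since $t$ has degree $3$ with $p(t)=0$ it is forced to be a source or a sink. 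Its remaining two edges then carry forced orientations back around the face boundaries, and one continues until some vertex is assigned two conflicting forced directions — equivalently, an odd cycle of degree-$3$/degree-$4$ vertices would have to alternate between the ``all in'' and ``all out'' states, which is impossible; here the choice $n=6k$ together with the prescribed pattern of $\pm1$'s pins down the relevant length/parity so that this cannot be satisfied.

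The main obstacle is the forcing step: at each degree-$4$ vertex one must actually exclude the ``all in''/``all out'' alternative, and I expect this is not purely local but requires looking one or two steps ahead along $P$ (via the common-edge argument above, and near $d$ via the specific placement of $d$'s four out-edges and of the four $+1$'s $p(v_{3k\pm1})=p(v_{3k\pm2})=1$). Once the propagation is organised carefully, the final contradiction — whether realised directly at $t$ or as a parity obstruction on a face-boundary cycle — is immediate, completing the proof that no valid orientation meeting $p$ exists.
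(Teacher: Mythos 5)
Your local congruence analysis is correct, and the overall strategy (propagate forced orientations outward from $d$, contradiction at $t$) is the paper's strategy too; you have also put your finger on the right tool, the adjacent pair of degree-$4$ vertices. But as you yourself suspect, the forcing step is where the proof lives, and the cascade you describe is not the one that works. The correct pair argument is: $v_{3m-1}$ and $v_{3m-2}$ each already receive an in-edge from the hub $v_{3m}$, and since $p=+1$ forces in-degree $1$ or $4$, each has its three remaining edges all out or all in; they cannot both be ``all out'' (their common edge would leave both) nor both ``all in'', so exactly one is a near-source and one a near-sink --- but \emph{which} is not determined. Hence your claim that every edge of $P$ on that side is forced is false: the $P$-edge inside each pair stays undetermined. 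What is forced is periodic with period $3$: the pair delivers exactly one in-edge and one out-edge to the next hub $v_{3m-3}$, whose prescription ($+1$, so in-degree $1$; on the other side of $d$, $-1$, so in-degree $3$) then forces its two remaining edges outward (respectively inward), restarting the induction.

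The second, more serious error is the claim that ``in both directions the boundary edge at $w$ is directed into $w$.'' The two halves of $P$ behave \emph{oppositely}, and this asymmetry is the whole point of the example: on the $+1$ side the induction ends with $v_0w$ and $v_0t$ directed out of $v_0$ (into $w$ and $t$), while on the $-1$ side it ends with $wv_{6k}$ and $tv_{6k}$ directed into $v_{6k}$ (out of $w$ and $t$). Your route through $w$ therefore does not close: with one in-edge and one out-edge known at $w$, both admissible in-degrees $1$ and $4$ remain possible, and no further orientation at $w$ is forced. The contradiction is obtained directly at $t$: it has degree $3$ and $p(t)=0$, so it must be a source or a sink, yet $v_0t$ enters it and $v_{6k}t$ leaves it. Until the period-$3$ induction and this sign asymmetry are made explicit, the argument remains a plan rather than a proof.
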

\begin{proof}
We first show that for all $0\leq j\leq k$, $v_{3(k-j)}v_{3(k-j)-1}$ and $v_{3(k-j)}v_{3(k-j)-2}$ point out of $v_{3(k-j)}$. We proceed by induction on~$j$. The base case is determined by the directions of the edges incident with~$d$. Suppose that for some $j$ where $0\leq j\leq k-1$, $v_{3(k-j)}v_{3(k-j)-1}$ and $v_{3(k-j)}v_{3(k-j)-2}$ point out of~$v_{3(k-j)}$. We consider $v_{3(k-j-1)}$. Since $v_{3(k-j)}v_{3(k-j)-1}$ and $v_{3(k-j)}v_{3(k-j)-2}$ point out from $v_{3(k-j)}$, the remaining edges at each of $v_{3(k-j)-1}$ and $v_{3(k-j)-2}$ point all in or all out. Since these vertices are adjacent, $v_{3(k-j)-3}v_{3(k-j)-1}$ and $v_{3(k-j)-3}v_{3(k-j)-2}$ point one in and one out of $v_{3(k-j)-3}$. Hence the remaining two edges at $v_{3(k-j)-3}$: $v_{3(k-j)-4}v_{3(k-j)-4}$ and $v_{3(k-j)-3}v_{3(k-j)-5}$, must satisfy the prescription of $v_{3(k-j)-3}$ and thus point out of $v_{3(k-j)-3}=v_{3(k-j-1)}$ as required. \\

Similarly for all $0\leq j\leq k$, $v_{3(k+j)}v_{3(k+j)+1}$ and $v_{3(k+j)}v_{3(k+j)+2}$ point into~$v_{3(k+j)}$. Then $v_{6k}t$ points out of $t$ and $v_{0}t$ points into~$t$. Hence no direction of $tw$ meets~$p(t)$. Thus $G$ has no valid orientation that meets~$p$.
\end{proof}

Such a result may be possible if $d$ is restricted to degree~$3$.

\bibliography{ResearchProposal}
\bibliographystyle{myapalike}

\end{document}